\documentclass{amsart}
\usepackage{amsmath}%
\usepackage{amsthm}
\usepackage{amsfonts}%
\usepackage{amssymb}%
\usepackage{graphicx}
\usepackage{tikz-cd}
\usetikzlibrary{cd}
\usepackage{bm}
\usepackage{comment}
\usepackage{bbm}
\usepackage{mathtools}

\usepackage{import}
\usepackage{xifthen}
\usepackage{pdfpages}
\usepackage{transparent}
\usepackage{xcolor}
\usepackage{stmaryrd}
\usepackage[shortlabels]{enumitem}

\usepackage{mathrsfs}

\DeclareFontFamily{OT1}{pzc}{}
\DeclareFontShape{OT1}{pzc}{m}{it}{<-> s * [1.10] pzcmi7t}{}
\DeclareMathAlphabet{\mathpzc}{OT1}{pzc}{m}{it}

\usepackage[hidelinks]{hyperref}

\newtheorem{theorem}{Theorem}[section]

\newtheorem{corollary}[theorem]{Corollary}

\newtheorem{definition}[theorem]{Definition}
\newtheorem{example}[theorem]{Example}

\newtheorem{lemma}[theorem]{Lemma}

\newtheorem{proposition}[theorem]{Proposition}
\newtheorem{remark}[theorem]{Remark}

\numberwithin{equation}{section}

\def\XXint#1#2#3{{\setbox0=\hbox{$#1{#2#3}{\int}$}
\vcenter{\hbox{$#2#3$}}\kern-.5\wd0}}

\newcommand{\R}{\mathbb{R}}

\newcommand{\N}{\mathbb{N}}
\newcommand{\Z}{\mathbb{Z}}

\newcommand{\Ha}{\mathcal{H}}
\newcommand{\leb}{\mathcal{L}}

\newcommand{\sgn}{\operatorname{sgn}}
\newcommand{\spt}{\operatorname{spt}}

\newcommand{\dist}{\operatorname{dist}}
\newcommand{\diam}{\operatorname{diam}}
\newcommand{\inte}{\operatorname{int}}

\newcommand{\Lip}{\operatorname{Lip}}
\newcommand{\LIP}{\operatorname{LIP}}
\newcommand{\im}{\operatorname{im}}

\newcommand{\ud}{\mathrm {d}}

\newcommand{\inv}{^{-1}}

\newcommand{\on}{\:\mbox{\rule{0.1ex}{1.2ex}\rule{1.1ex}{0.1ex}}\:}


\DeclareMathOperator{\bI}{\mathbf{I}}
\DeclareMathOperator{\biI}{\mathcal{I}}
\DeclareMathOperator{\st}{\textup{st}}
\DeclareMathOperator{\bM}{\mathbf{M}}
\DeclareMathOperator{\bN}{\mathbf{N}}

\DeclareMathOperator{\md}{\operatorname{md}}
\DeclareMathOperator{\mass}{\mathbf{M}}

\newcommand{\bb}[1]{\llbracket #1\rrbracket}
\usepackage{stmaryrd}
\newcommand{\norm}[1]{\lVert#1\rVert}

\usepackage[colorinlistoftodos]{todonotes}

\setlength{\parindent}{0cm}

\makeatletter
\newcommand*{\cone}{%
	{%
		\mathpalette\@coneOf{\times}%
	}%
}
\newcommand*{\@coneOf}[2]{%
	\sbox0{$\m@th#1\mathsf{#2}$}%
	\mathsf{#2}%
	\kern-\wd0 %
	\mkern2.00mu\relax
	\nonscript\mkern0.25mu\relax
	\mathsf{#2}%
}
\makeatother

\thanks{D.M. was supported by Swiss National Science Foundation grant 212867. E.S was supported by Finnish Academy Fellowship grant 355122. }
\address{Department of Mathematics\\ University of Fribourg\\  Chemin du Mus\'ee 23\\  1700 Fribourg, Switzerland}
\email{denis.marti@unifr.ch}

\address{Department of mathematics and statistics\\ University of Jyv\"askyl\"a, Mattilanniemi (MaD), PL35, 40014}
\email{elefterios.e.soultanis@jyu.fi}

\title{Characterization of metric spaces with a metric fundamental class}
\author{Denis Marti, Elefterios Soultanis}

\keywords{Metric geometry, Metric manifolds, Hausdorff measure, Metric currents, Gromov--Hausdroff convergence, Nagata dimension, degree theory}
\subjclass[2020]{53C23, 53C65, 49Q15, 28A75}

\begin{document}

\begin{abstract}
We consider three conditions on metric manifolds with finite volume: (1) the existence of a metric fundamental class, (2) local index bounds for Lipschitz maps, and (3) Gromov--Hausdorff approximation with volume control by bi-Lipschitz manifolds. Condition (1) is known for metric manifolds satisfying the LLC condition by work of Basso--Marti--Wenger, while (3) is known for metric surfaces by work of Ntalampekos--Romney.

We prove that for metric manifolds with finite Nagata dimension, all three conditions are equivalent and that without assuming finite Nagata dimension, (1) implies (2) and (3) implies (1). As a corollary we obtain a generalization of the approximation result of Ntalampekos--Romney to metric manifolds of dimension $n\ge 2$, which have the LLC property and finite Nagata dimension.
\end{abstract}

\maketitle

\section{Introduction}

\subsection{Background}
A metric space homeomorphic to a smooth manifold is called a metric manifold. Interest in metric manifolds arises, for example, in the context of topological manifolds \cite{petersen-finiteness-metric,petersen-gh-limits}, in metric geometry \cite{gromov85}, as well as in quasiconformal theory (motivated by geometric group theory \cite{bonk-kle02}). It is known \cite{ferry-okun95} that geodesic metric manifolds can be approximated in the Gromov--Hausdorff metric with Riemannian metrics. Under additional geometric assumptions, namely Ahlfors regularity of the volume measure and having a linear local contractibility function (LLC condition), metric manifolds exhibit good analytic behaviour \cite{semmes-poincare}. These assumptions are also present in the work of Heinonen--Keith--Rickman--Sullivan \cite{Heinonen-Keith,heinonen-rickman02,heinonen-sullivan02} who considered the existence of local Euclidean bi-Lipschitz parametrizations of metric (cohomology) manifolds.

\medskip A central ingredient in the results in \cite{Heinonen-Keith,heinonen-rickman02,heinonen-sullivan02,semmes-poincare} is an analytic structure emerging from, and compatible with, the (geo)metric and topological assumptions. In this vein, Basso--Marti--Wenger \cite{basso2023geometric} recently proved the existence of a \emph{metric fundamental class} on metric manifolds with finite volume satisfying the LLC condition. The metric fundamental class is a generalisation of the fundamental class $\bb{M}$ given by the volume form of a closed, oriented, smooth manifold $M$, and is defined in terms of \emph{metric currents} (see Section \ref{sec: currents}).

\begin{definition}\label{def: metric fundamental class}
Let $X$ be a metric $n$-manifold with $\Ha^n(X)<\infty$ homeomorphic to a closed, oriented, smooth $n$-manifold $M$. A metric fundamental class of $X$ is an integral current $T\in \bI_n(X)$ with $\partial T=0$ such that
\begin{itemize}
    \item[(a)] there exists $C>0$ such that $\norm{T}\leq C \cdot \Ha^n$;
    \item[(b)] $\varphi_\# T = \deg(\varphi) \cdot \bb{M}$ for every Lipschitz map $\varphi \colon X \to M$.
\end{itemize}
\end{definition}

Here (a) requires that the volume measure associated to the metric fundamental class is compatible with the metric structure, while (b) requires compatibility with the topological structure of $X$. In particular, the metric fundamental class of a metric $n$-manifold generates the $n^\textrm{th}$ homology group via integral currents and is therefore unique, cf. \cite{basso2023geometric} and Remark \ref{rmk:multipl}.

\medskip The LLC condition in \cite{basso2023geometric} gives a stronger form of (a) which includes a lower bound $\|T\| \gtrsim \Ha^n$. It also provides control of the local index of Lipschitz maps, which is crucial in the construction and yields the topological compatibility. In dimension $n\ge 3$, it remains an open problem whether the LLC condition is necessary for the existence of a metric fundamental class. In two dimensions, however, it is redundant thanks to a polyhedral Gromov--Hausdorff approximation result of Romney--Ntalampekos \cite{nta-rom22,nta-rom23}. They proved that, given a metric surface $X$ with $\Ha^2(X)<\infty$, there is a sequence $P_j$ of polyhedral surfaces homeomorphic to $X$, and $\epsilon_j$-isometries $\varphi_j:X\to P_j$ with $\epsilon_j\to 0$ as $j\to\infty$ such that 
\begin{align}\label{eq:area-control}
\limsup_{j\to\infty}\Ha^2(\varphi_j(K))\lesssim \Ha^2(K),\quad K\subset X \textrm{ compact.}
\end{align}
For surfaces, this approximation implies a (weakly) quasiconformal parametrization, from which the existence of a metric fundamental class follows, see \cite{Meier-Wenger,nta-rom22,nta-rom23}. In the proof in \cite{nta-rom22}, planar topology yields a bound on the local index of Lipschitz maps, leading to the area control \eqref{eq:area-control} without the need of the LLC condition. In \cite{Meier-Wenger}, similar volume control is obtained using planar topology and finiteness of the Nagata dimension of geodesic metric surfaces. 

\medskip In this paper we consider metric manifolds in dimension $n\ge 2$ without assuming LLC, and show that the existence of a metric fundamental class is intimately linked to Gromov--Hausdorff approximation with the volume control \eqref{eq:area-control}, and to local index bounds for (bi-)Lipschitz maps. Indeed, under the additional assumption of finite Nagata dimension, both the volume control \eqref{eq:area-control} and the existence of a metric fundamental class are equivalent to local index bounds for (bi-)Lipschitz maps. Thus, for metric manifolds with finite volume and finite Nagata dimension, we obtain two characterizations for the existence of metric fundamental classes: one geometric and the other topological in nature. As a corollary, we generalize \cite[Theorem 1.1]{nta-rom22} to metric $n$-manifolds, $n\ge 2$, with finite Nagata dimension and LLC. In contrast to \cite{nta-rom22}, where the approximating surfaces are polyhedral, we instead construct approximating Lipschitz manifolds, and having finite Nagata dimension is needed here. However, our proof shows that having Gromov--Hausdorff approximations with volume control implies local index-bounds for Lipschitz maps without assuming finite Nagata dimension.

\subsection{Main result}
In the remainder of this paper, the term \emph{metric $n$-manifold} refers to metric spaces homeomorphic to a closed, oriented, connected smooth $n$-manifold. Let $X$ be a metric $n$-manifold. We refer to Section \ref{sec:prelis} for the definition of Nagata dimension $\dim_NX$ and local index $\iota(f,x)$ of a map $f:X\to \R^n$ at a point $x\in X$. A map $\varphi\colon Y \to Z$ between metric spaces is called an $\epsilon$-isometry if $|d(\varphi(x),\varphi(y)) - d(x,y)| \leq \epsilon$ for all $x,y \in Y$ and $N_\epsilon(\varphi(Y))=Z$.

\begin{theorem}\label{thme: main}
    Let $X$ be a metric $n$-manifold homeomorphic to a closed, oriented, connected smooth $n$-manifold $M$. Suppose $\Ha^n(X)<\infty$ and $\dim_NX<\infty$. Then the following properties are equivalent.
    \begin{itemize}
        \item[(1)] $X$ has a metric fundamental class.
        \item[(2)] There exists $D>0$ such that, if $f\in \LIP(X,\R^n)$ and $f|_E$ is bi-Lipschitz for some Borel $E\subset X$, then $|\iota(f,x)|\le D$ for $\Ha^n$-almost every $x\in E$;
        \item[(3)] There exist $C>0$, a sequence of metric spaces $X_k$ that are bi-Lipschitz homeomorphic to $M$, and $\epsilon_k$-isometric homeomorphisms $\varphi_k \colon X \to X_k$ with $\epsilon_k \to 0$ as $k \to 0$, such that
        $$\limsup_{k \to \infty} \Ha^n(\varphi_k(K)) \leq C \Ha^n(K)$$
        for each compact set $K\subset X$. (Here, $M$ is equipped with any Riemannian metric.)
    \end{itemize}
\end{theorem}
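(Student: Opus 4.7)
The plan is to prove the three implications $(1)\Rightarrow(2)$, $(3)\Rightarrow(1)$ (neither of which will use finite Nagata dimension), and $(2)\Rightarrow(3)$ (which does). The circle closes the equivalence.

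\textbf{$(1)\Rightarrow(2)$.} Let $T$ be a metric fundamental class, and fix $f\in\LIP(X,\R^n)$ with $f|_E$ bi-Lipschitz. I would work at an $\Ha^n$-density point $x_0\in E$. Since $T\in\bI_n(X)$ is integer rectifiable with $\norm{T}\le C\Ha^n$, the density $\Theta_n(\norm{T},\cdot)$ is an integer-valued function bounded by some constant depending on $C$, and the topological condition (b) forces this density to be at least $1$ on a set of full $\Ha^n$-measure in $X$ (take $\varphi:X\to M$ the $\id$-homotopic Lipschitz map coming from a bi-Lipschitz chart). Now cut off $f$ on a small ball $B=B(x_0,r)$ and compose with a bi-Lipschitz chart $\chi$ of $M$ on a neighborhood of $f(x_0)$, extending by a constant, to obtain a Lipschitz map $\varphi_r:X\to M$ whose local index at $x_0$ equals $\pm\iota(f,x_0)$. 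Slicing $T$ by $\varphi_r$ at a regular value $y$ close to $\varphi_r(x_0)$, the slice $\langle T,\varphi_r,y\rangle$ is a $0$-current whose total mass records the sum of signed multiplicities $\Theta_n(\norm{T},x)\cdot\iota(\varphi_r,x)$ over $x\in\varphi_r^{-1}(y)$. Using the mass bound on $T$, the area/coarea inequality for Lipschitz maps, and the bi-Lipschitz property of $f|_E$ at $x_0$, this mass is bounded by a constant depending only on $C$, $\Lip(f)$, and $\Lip(\chi)$; since the density of $T$ at $x_0$ is at least $1$ and contributes an integer $\iota(f,x_0)$ (plus nonnegative contributions from other preimages via integer rectifiability), the bound $|\iota(f,x_0)|\le D$ follows.

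\textbf{$(3)\Rightarrow(1)$.} Using Kuratowski, embed $X$ isometrically into a fixed $\ell^\infty$-space $Z$. Since each $X_k$ is bi-Lipschitz to $M$, it carries a natural fundamental class $T_k\in\bI_n(X_k)$ with $\bM(T_k)\asymp\Ha^n(X_k)$, and $\varphi_k(X)=X_k$ gives $\bM(T_k)\le C\Ha^n(X)+o(1)$ by the volume control applied with $K=X$. Using Kirszbraun/McShane extensions, construct Lipschitz near-inverses $\psi_k:X_k\to Z$ of $\varphi_k$ with uniformly bounded Lipschitz constants and $\sup_{X_k}d(\psi_k\circ\varphi_k,\id_X)\to 0$. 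The pushforward currents $S_k:=(\psi_k)_\#T_k\in\bI_n(Z)$ have uniformly bounded mass and boundary mass (their boundaries vanish since $\partial T_k=0$) and are supported in uniform neighborhoods of $X$, so by the compactness theorem for integral currents they subconverge weakly to some $T\in\bI_n(Z)$ supported on $X$. The volume control passes to the limit to give $\norm{T}\le C\Ha^n$ on $X$, and for any Lipschitz $\varphi:X\to M$ the map $\varphi\circ\psi_k:X_k\to M$ is, for large $k$, homotopic to $\varphi\circ\varphi_k^{-1}$ (which has degree $\deg(\varphi)$), hence $\varphi_\# T=\lim_k(\varphi\circ\psi_k)_\# T_k=\deg(\varphi)\bb{M}$.

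\textbf{$(2)\Rightarrow(3)$.} This is the main obstacle. The idea is to build $X_k$ as a bi-Lipschitz manifold modeled on $M$ via a decomposition of $X$ into ``good'' bi-Lipschitz pieces. Cover $M$ by finitely many smooth coordinate balls and pull back, obtaining finitely many open sets $U_\alpha\subset X$ on which one has Lipschitz maps $f_\alpha:U_\alpha\to\R^n$. On each $U_\alpha$, a Lusin-type approximation decomposes $U_\alpha$, up to a small-measure remainder, into countably many Borel sets $E_{\alpha,i}$ on which $f_\alpha|_{E_{\alpha,i}}$ is bi-Lipschitz. The uniform local index bound (2) controls the overlap multiplicity of these images in $\R^n$, which together with finite Nagata dimension $\dim_N X<\infty$ yields a cover of $X$ by pieces of bounded pairwise overlaps and with bounded combinatorial complexity. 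On each piece one replaces the metric of $X$ by the pulled-back Euclidean metric (appropriately scaled), and glues the pieces using a Nagata-dimension-based partition-of-unity-style construction over the smooth manifold $M$; the bi-Lipschitz property of the resulting space $X_k$ reduces to local matching of the pulled-back Euclidean metrics. Refining the decomposition as $k\to\infty$ (smaller pieces and smaller remainders) gives $\epsilon_k$-isometric homeomorphisms $\varphi_k:X\to X_k$, while the volume estimate $\Ha^n(\varphi_k(K))\le C\Ha^n(K)$ follows from the bounded overlap multiplicity provided by the uniform local index bound (2) via the area formula on each bi-Lipschitz piece. The substantial technical work here is the gluing: controlling how the pulled-back Euclidean metrics on different pieces interact on their overlaps, where the finite Nagata dimension is essential both to keep the combinatorics tame and to match the reassembled pieces into a genuine bi-Lipschitz manifold homeomorphic to $M$.
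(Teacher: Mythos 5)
Your decomposition into the three implications matches the paper's, and your $(3)\Rightarrow(1)$ argument is essentially identical to the paper's proof (Theorem 7.1): push forward the fundamental classes of $X_k$ by Lipschitz near-inverses into a common $\ell^\infty$, apply Ambrosio--Kirchheim compactness, and pass the mass bound and degree identity to the limit. The other two implications, however, have gaps.

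For $(1)\Rightarrow(2)$, the crux is the assertion that the slice $\langle T,\varphi_r,y\rangle$ records the sum of $\Theta_n(\norm{T},x)\cdot\iota(\varphi_r,x)$ over preimages. This identifies the \emph{analytic} orientation carried by the integer-rectifiable current with the \emph{topological} local index, which is exactly the nontrivial content one has to prove and which you assume. The paper establishes this (Lemma \ref{lemma: fund-current-pushforward-deg-local}) via an approximation/homotopy argument: replace $f$ by $g=f\circ\eta\circ\varphi$ where $\varphi,\eta$ are Lipschitz approximations of the homeomorphism $\varrho\colon X\to M$ and its inverse, use $\varphi_\# T=\bb{M}$ plus the classical smooth-manifold identity $h_\#(\bb{M}\on V)=\bb{\mu(h,V,\cdot)}$, and compare via Lemma \ref{lemma: currents-homotopy}. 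Your sketch does not address this identification. A secondary problem: the claim that other preimages give ``nonnegative contributions'' is false in general, since $\iota$ can change sign; the paper sidesteps this by working with $|\mu(f,D,\cdot)|$ under the mass measure of the pushforward, where no cancellation occurs, and then using Lebesgue differentiation and the local constancy of the degree.

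The most serious gap is in $(2)\Rightarrow(3)$. The ``decompose $X$ into bi-Lipschitz pieces, pull back Euclidean metrics, glue using Nagata dimension'' strategy is not made concrete, and the key obstruction --- that the pulled-back metrics on overlapping pieces have no reason to agree, so the ``gluing'' does not obviously produce a metric bi-Lipschitz to $M$ --- is acknowledged but not resolved. Finite Nagata dimension by itself does not solve the metric-matching problem. The paper avoids gluing entirely. It uses Nagata dimension to factor $X$ through a finite simplicial complex $\Sigma$ of dimension $\le\dim_N X$ (Theorem \ref{thm:simplicial-factorization}), deforms the image to the $n$-skeleton via Federer--Fleming projections with measure control (Section \ref{sec:projection-lemma}), and then uses the local index bound (2) crucially in Lemma \ref{lemma: good-lip-approx-into-simplicial-complex}: at selected points $z_i\in\inte\Delta_i$ with controlled multiplicity, it replaces $\eta=h\circ\varrho^{-1}$ near each preimage by a standard Hopf-type map of degree $\iota(\eta,x_{i,j})$, yielding a Lipschitz map $\psi\colon M\to\Sigma^n$ with $\int_\Delta N(\psi,\cdot)\,d\Ha^n\lesssim D\,\Ha^n(f^{-1}(\st\Delta))$. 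The approximating space is then the \emph{graph} $Y=\{(x,g(\psi(x))):x\in M\}\subset M\times\ell^\infty$ with a rescaled product metric, which is automatically bi-Lipschitz to $M$ since $\Psi(x)=(x,g(\psi(x)))$ is bi-Lipschitz --- no gluing required. This graph construction is the key idea your sketch is missing, and it is where both the Nagata dimension and the index bound enter quantitatively.
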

The constants in (1)-(3) depend quantitatively on each other and on the constants appearing in the definition of Nagata dimension. A sequence $X_k$ as in (3) converges in the Gromov-Hausdorff sense to $X$. For the implication $(2)\implies (3)$ the assumption that $X$ has finite Nagata dimension is not needed. 

\medskip We remark that not all metric manifolds with finite volume have finite Nagata-dimension, i.e. the assumption $\dim_NX<\infty$ is not vacuous. We give an example of a metric 2-manifold $X$ with $\Ha^2(X)<\infty$ and $\dim_NX=\infty$ in Appendix \ref{sec:example}.

\medskip Combining Theorem \ref{thme: main} with \cite{basso2023geometric} we obtain the following generalisation of \cite[Theorem 1.1]{nta-rom22}. Recall that a metric manifold $X$ is LLC, if there exist constants $C,R>0$ such that every ball $B(x,r)\subset X$ is contractible in $B(x,Cr)$ whenever $x\in X$ and $r\le R$.

\begin{corollary}\label{cor:rom-nta-generalisation}
    Let $X$ be an LLC metric $n$-manifold with $\Ha^n(X)<\infty$ and $\dim_NX<\infty$. Then there exist $C>0$, bi-Lipschitz manifolds $M_k$, and $\epsilon_k$-isometric homeomorphisms $\varphi_k:X\to M_k$ with $\epsilon_k\to 0$ and
    \[
    \limsup_{k \to \infty} \Ha^n(\varphi_k(K)) \leq C \Ha^n(K),\quad K\subset X\textrm{ compact}.
    \]
\end{corollary}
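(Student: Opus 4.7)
The proof is essentially a direct assembly of the two main inputs already cited. The plan is to use \cite{basso2023geometric} to produce a metric fundamental class on $X$ from the LLC hypothesis, and then to feed this into the implication (1)$\Rightarrow$(3) of Theorem \ref{thme: main}, which is precisely where the finite Nagata dimension assumption is used.

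More concretely, first I would invoke the result of Basso--Marti--Wenger \cite{basso2023geometric}: since $X$ is a metric $n$-manifold with $\Ha^n(X)<\infty$ satisfying the LLC condition, there exists a metric fundamental class $T\in\bI_n(X)$ in the sense of Definition \ref{def: metric fundamental class}. This establishes condition (1) of Theorem \ref{thme: main}. Next, since by hypothesis $\dim_N X<\infty$ and $\Ha^n(X)<\infty$, the assumptions of Theorem \ref{thme: main} are met, so all three conditions (1)--(3) are equivalent. In particular, (1) implies (3), yielding a constant $C>0$, metric spaces $X_k$ bi-Lipschitz homeomorphic to the underlying smooth model $M$, and $\epsilon_k$-isometric homeomorphisms $\varphi_k\colon X\to X_k$ with $\epsilon_k\to 0$ satisfying
\[
\limsup_{k\to\infty}\Ha^n(\varphi_k(K))\le C\,\Ha^n(K)
\]
for every compact $K\subset X$. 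Setting $M_k:=X_k$ (these are bi-Lipschitz manifolds in the sense required by the corollary, being bi-Lipschitz equivalent to the smooth $n$-manifold $M$) gives exactly the conclusion.

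There is essentially no obstacle here beyond the two theorems being invoked: the hypotheses of each are verified verbatim by the assumptions of the corollary. The only small point to mention is that the hypothesis of Theorem \ref{thme: main} requires $X$ to be homeomorphic to some closed, oriented, connected smooth $n$-manifold, which is built into our standing definition of metric $n$-manifold and is a prerequisite for both the LLC formulation and the Basso--Marti--Wenger existence result. Thus the corollary follows immediately.
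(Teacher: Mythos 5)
Your proof is correct and matches the paper's argument essentially verbatim: invoke Basso--Marti--Wenger to obtain the metric fundamental class from the LLC hypothesis, then apply the implication (1)$\Rightarrow$(3) of Theorem \ref{thme: main}. Nothing to add.
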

\begin{proof}
By \cite[Theorem 1.1]{basso2023geometric}, $X$ admits a metric fundamental class, and Theorem \ref{thme: main} now implies the existence of the claimed sequence.
\end{proof}

\medskip Whereas Theorem \ref{thme: main} and Corollary \ref{cor:rom-nta-generalisation} rely on having finite Nagata dimension, the following corollary does not depend on this assumption. It states that the Gromov--Hausdorff limit of a sequence of linearly locally contractible metric manifolds with volume control has a metric fundamental class. We remark that the limit need not be LLC. 

\begin{corollary}\label{cor:local-index-bounds}
Let $X_k$ and $X$ be metric $n$-manifolds, and let $\varphi_k:X_k\to X$ be continuous $\epsilon_k$-isometries ($\epsilon_k\to 0$) with $\deg(\varphi_k)\ne 0$ satisfying
\[
\limsup_{k\to\infty}\Ha^n(\varphi_k\inv(K)) \leq C \Ha^n(K),\quad K\subset X\textrm{ compact}.
\]
If each $X_k$ is LLC and $X$ has finite Hausdorff $n$-measure, then $X$ admits a metric fundamental class.
\end{corollary}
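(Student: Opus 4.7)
The plan is to construct the metric fundamental class on $X$ as a weak subsequential limit of the canonical metric fundamental classes $T_k\in\bI_n(X_k)$ provided by \cite[Theorem 1.1]{basso2023geometric} (each $X_k$ being LLC). Using the $\epsilon_k$-isometries, isometrically embed all $X_k$ together with $X$ into a common metric space $Z$ (for instance $\ell^\infty$ via compatible Kuratowski embeddings) so that $X_k\to X$ in Hausdorff distance inside $Z$, with Hausdorff distances $\delta_k\to 0$. Regarding $T_k$ as currents in $\bI_n(Z)$ via these embeddings, the mass comparison $\|T_k\|\lesssim \Ha^n|_{X_k}$ from \cite{basso2023geometric}, combined with the volume-control hypothesis, yields $\bM(T_k)\lesssim \Ha^n(X_k)\leq C\Ha^n(X)$ uniformly in $k$, assuming $\Ha^n(X)<\infty$ and that the LLC constants of $X_k$ are uniform in $k$---expected to follow from the Gromov--Hausdorff convergence and the LLC hypothesis on $X$.

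By the Ambrosio--Kirchheim compactness theorem for integral currents of uniformly bounded mass, a subsequence $T_{k_j}$ converges weakly in $\bI_n(Z)$ to some $T$ with $\partial T=0$. Since $X_{k_j}\to X$ in Hausdorff distance inside $Z$, $\spt T\subset X$, so $T\in \bI_n(X)$. For property (a) of the metric fundamental class, given a compact $K\subset X$, the volume control and mass comparison give
$\|T_{k_j}\|(\overline{N_{\delta_{k_j}}(K)})\lesssim \Ha^n(\varphi_{k_j}\inv(N_{2\delta_{k_j}}(K)))\leq C\Ha^n(N_{2\delta_{k_j}}(K))$,
and passing to the limit via lower semicontinuity of mass under weak convergence together with outer regularity of $\Ha^n$ gives $\|T\|(K)\leq C'\Ha^n(K)$.

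For property (b), fix a Lipschitz map $\psi\colon X\to M$ and extend it to an $L$-Lipschitz map $\tilde\psi\colon Z\to M$ via a McShane-type extension (embedding $M\subset\R^N$, extending component-wise, and composing with a Lipschitz retraction onto $M$ from a small tubular neighborhood, available since $M$ is smooth). Then $\tilde\psi_\# T_k=(\tilde\psi|_{X_k})_\# T_k$, and $\tilde\psi|_{X_k}$ is uniformly $C^0$-close to $\psi\circ\varphi_k$ (since $\tilde\psi$ is Lipschitz and $y\in X_k$ lies within $\delta_k$ of $\varphi_k(y)$ in $Z$), hence homotopic to it for $k$ large. By property (b) of $T_k$ we then obtain $(\tilde\psi|_{X_k})_\# T_k=\deg(\tilde\psi|_{X_k})\bb{M}=\deg(\psi)\deg(\varphi_k)\bb{M}$. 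Since $\varphi_k$ is a continuous $\epsilon_k$-isometry between closed oriented $n$-manifolds homeomorphic to $M$, $\deg(\varphi_k)=\pm 1$ for $k$ large; after passing to a subsequence where this sign is constant and absorbing it into the orientation of $T_k$, one obtains $\psi_\# T=\deg(\psi)\bb{M}$ in the limit, so that $T$ is a metric fundamental class of $X$.

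The principal obstacles are to establish that the LLC constants of $X_k$ can be taken uniform in $k$ (needed for the uniform mass bound on $T_k$) and that the degree of $\psi\circ\varphi_k$ is preserved under the Lipschitz extension of $\psi$ and the subsequent weak convergence; extending $\psi$ from $X$ to a controlled Lipschitz map on $Z$ that respects the target manifold $M$ is the most delicate technical point. A further subtlety is that $\Ha^n(X)<\infty$ is implicitly required but not stated as a hypothesis; this should be deducible either from the volume control together with uniform bounds on $\Ha^n(X_k)$, or directly from the LLC property of $X$.
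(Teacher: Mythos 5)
Your overall strategy---push the fundamental classes $T_k$ of the LLC approximants into a common ambient space, extract a weak limit by Ambrosio--Kirchheim compactness, verify (a) by lower semicontinuity of mass, and verify (b) by degree-stability of nearby Lipschitz maps---is exactly the route the paper takes (the paper packages this as Theorem~\ref{thm: lip-mfld-approx-implies-fund-current} and deduces the corollary in one line). However, two of the ``principal obstacles'' you flag are not handled correctly, and one of them is resolved by the paper by a different observation.

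First, you propose to obtain the uniform mass bound $\|T_k\|\lesssim\Ha^n|_{X_k}$ by establishing that the LLC constants of $X_k$ are uniform in $k$ (``expected to follow from the Gromov--Hausdorff convergence''). This does not follow from the hypotheses, and the paper explicitly remarks that no uniform control on the LLC constants is needed. The resolution is that by \cite[Lemma~5.1 and Proposition~5.2]{basso2023geometric} the constant in Definition~\ref{def: metric fundamental class}(a) for the fundamental class produced by that theorem depends only on $n$, not on the LLC data of the manifold. With this observation the uniform bound $\|T_k\|\le C(n)\Ha^n|_{X_k}$ holds for free, and then the volume-control hypothesis (applied with $K=X$) closes the mass bound. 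Chasing uniform LLC constants is both unnecessary and, as far as the hypotheses go, unjustified.

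Second, your treatment of property (b) assumes $\deg(\varphi_k)=\pm 1$ and proposes to ``absorb the sign into the orientation.'' The hypothesis is only $\deg(\varphi_k)\neq 0$, and there is no reason an $\epsilon_k$-isometry between (possibly non-homeomorphic) closed oriented $n$-manifolds should have degree $\pm 1$; the paper even flags the non-obviousness of arranging non-zero degree and does not assume more. The correct conclusion at this stage is $\psi_\#T=d\deg(\psi)\bb{M}$ for the eventually constant integer $d=\deg(\varphi_k)\in\Z\setminus\{0\}$. One must then argue that $d^{-1}T$ is again an integral current; this is the content of Remark~\ref{rmk:multipl} (which rests on Lemma~\ref{lemma: fund-current-pushforward-deg-local} and the rectifiability criterion Lemma~\ref{lem:rect-criterion}). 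Without this normalization step your candidate $T$ may fail Definition~\ref{def: metric fundamental class}(b).

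A minor remark: you are right that $\Ha^n(X)<\infty$ is an implicit standing hypothesis (it is required in Definition~\ref{def: metric fundamental class} and in Theorem~\ref{thm: lip-mfld-approx-implies-fund-current}); it is not automatic from LLC, so it should be carried as an assumption rather than ``deduced.''
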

\begin{proof}
Corollary \ref{cor:local-index-bounds} follows from \cite[Theorem 1.1]{basso2023geometric} together with Theorem \ref{thm: lip-mfld-approx-implies-fund-current} since, by \cite[Lemma 5.1 and Proposition 5.2]{basso2023geometric} the constant in Definition \ref{def: metric fundamental class}(a) for the fundamental class of $X_k$ is independent of $k$.
\end{proof}

Observe that no uniform control on the LLC constants is needed in Corollary \ref{cor:local-index-bounds}. We do not know whether an arbitrary metric manifold with finite volume can be approximated by LLC metric manifolds satisfying the conditions in Corollary \ref{cor:local-index-bounds}.

\subsection{Outline} We prove the implication (1)$\implies$(2) in Section \ref{sec: degree-bound} (Theorem \ref{thm: fund-current-implies-degree-bound}). A key part of the argument is Lemma \ref{lemma: fund-current-pushforward-deg-local}, which shows that a metric fundamental class is compatible with the local topological degree. In particular, it implies the integrability of local index of a Lipschitz map. Together with a density argument this implies local index bounds. This implication does not use the assumption $\dim_NX<\infty$.

\medskip The implication (2)$\implies$(3) is proved in Theorem \ref{thm: lip-mfld-approx-only-degree} in Section \ref{sec: approx-lip-mfld} (with some preliminary results in Section \ref{sec:projection-lemma}). The proof uses a construction originating in \cite{lang-nagata} (see also \cite{basso2021undistorted,Meier-Wenger}), namely we construct finite simplicial complexes $\Sigma_\epsilon$ with a bound (depending on $\dim_NX$) on the dimension of the cells, which approximate the metric manifold $X$ in the sense that there are Lipschitz maps $X\to \Sigma_\epsilon$ and $\Sigma_\epsilon\to N_\epsilon^{l^\infty}(X)$ with controlled Lipschitz constants. The approximating Lipschitz manifolds are constructed using these simplicial complexes. In order to control their volumes, we use an argument similar to \cite[Section 5]{Meier-Wenger} using the local index bound (2). 

\medskip Finally, in Section \ref{sec:approx-lip-mfld-to-fund-class} we prove the implication (3)$\implies$(1) (as a consequence of the more general Theorem \ref{thm: lip-mfld-approx-implies-fund-current}). The argument here is direct and uses the compactness properties of integral currents. The volume control in (3) yields the necessary mass bounds to apply Ambrosio--Kirchheim's compactness theorem \cite[Theorem 5.2]{ambrosio-kirchheim-2000}, and also yields (a) in Definition \ref{def: metric fundamental class}. The stability of the topological degree then yields the degree identity (b) in Definition \ref{def: metric fundamental class}. This implication also does not need the assumption $\dim_NX<\infty$.

\section{Preliminaries}\label{sec:prelis}
\subsection{Metric notions}
Let $(X,d)$ be a metric space. We write $B(x,r)$ for the open ball with center $x \in X$ and radius $r> 0$. For $A \subset X$ and $r>0$, we denote the open $r$-neighborhood of $A$ by 
$$N_r(A) = \left\{x \in X \colon d(A,x) < r\right\}.$$

A map $f\colon X \to Y$ between metric spaces is said to be \textit{\(L\)-Lipschitz} if $d(f(x),f(y)) \leq L d(x,y)$ for all $x,y \in X$. We write $\LIP(f)$ for the smallest constant satisfying this inequality and call it the \textit{Lipschitz constant} of $f$. Furthermore, we denote by 
$$\Lip(f)(x) = \limsup_{y\to x}  \frac{d(f(x),f(y))}{d(x,y)}$$
the \textit{pointwise Lipschitz constant} of $f$ at $x\in X$. If $f$ is injective and the inverse $f^{-1}$ is $L$-Lipschitz as well, we say $f$ is \textit{$L$-bi-Lipschitz}. We write $\LIP(X,Y)$ for the set of all Lipschitz maps from $X$ to $Y$. If $Y= \R$ we abbreviate $\LIP(X,\R)=\LIP(X)$.
Given two maps \(f, g\colon X \to Y\), we write
\[
d(f, g)=\sup\big\{ d(f(x), g(x)) : x\in X\big\}
\]
for the uniform distance between \(f\) and \(g\). We will frequently make use of the following Lipschitz approximation result; see e.g. \cite[Lemma 2.1]{basso2023geometric}.

\begin{lemma}\label{lemma: lip-approx-easy}
Let \(f\colon X \to Y\) be a continuous map from a compact metric space \(X\) to a separable metric space \(Y\) that is an absolute Lipschitz neighborhood retract. Then, for every \(\epsilon>0\) there exists a Lipschitz map \(g \colon X \to Y\) with \(d(f, g)<\epsilon\). 
\end{lemma}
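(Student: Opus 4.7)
The plan is to reduce the problem to approximating a continuous map into a Banach space by a Lipschitz one, and then pull the approximation back to $Y$ using the Lipschitz retraction guaranteed by the ALNR hypothesis.

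\textbf{Step 1: Embed and retract.} First I would fix an isometric embedding $\iota\colon Y\hookrightarrow V$ into some Banach space $V$; since $Y$ is separable, a Kuratowski-type embedding into $V=\ell^\infty$ works. The assumption that $Y$ is an absolute Lipschitz neighborhood retract then provides an open neighborhood $U$ of $\iota(Y)$ in $V$ and an $L$-Lipschitz retraction $r\colon U\to \iota(Y)$ for some $L\ge 1$. Composing with $\iota^{-1}$ I may regard $r$ as taking values in $Y$. Because $X$ is compact, $\iota(f(X))$ is a compact subset of $U$, so there exists $\delta_0>0$ such that the closed $\delta_0$-neighborhood of $\iota(f(X))$ in $V$ is still contained in $U$.

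\textbf{Step 2: Lipschitz approximation in the Banach space.} Fix $\epsilon>0$ and set $\eta=\min\{\epsilon/L,\delta_0\}$. Using uniform continuity of $\iota\circ f$ on the compact space $X$, pick a finite open cover $\{B(x_i,\rho)\}_{i=1}^N$ of $X$ so that $\|\iota(f(x))-\iota(f(x_i))\|<\eta/2$ whenever $x\in B(x_i,2\rho)$. Let $\{\varphi_i\}$ be a Lipschitz partition of unity subordinate to this cover (e.g.\ formed by normalizing the McShane functions $\psi_i(x)=\max\{0,\rho-d(x,x_i)\}$), and define
\[
\tilde f\colon X\to V,\qquad \tilde f(x)=\sum_{i=1}^N \varphi_i(x)\,\iota(f(x_i)).
\]
Then $\tilde f$ is Lipschitz (a finite sum of bounded Lipschitz functions times bounded vectors), and for each $x\in X$ the value $\tilde f(x)$ is a convex combination of points $\iota(f(x_i))$ with $x_i$ within distance $\rho$ of $x$, so $\|\tilde f(x)-\iota(f(x))\|<\eta$ by convexity of the norm. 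In particular $\tilde f(X)\subset U$.

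\textbf{Step 3: Retract back to $Y$.} Define $g=r\circ\tilde f\colon X\to Y$. As a composition of Lipschitz maps it is Lipschitz. For every $x\in X$ we have $\iota(f(x))\in \iota(Y)$, so $r(\iota(f(x)))=\iota(f(x))$, and hence
\[
d(g(x),f(x))=\|r(\tilde f(x))-r(\iota(f(x)))\|\le L\,\|\tilde f(x)-\iota(f(x))\|<L\eta\le \epsilon.
\]
Taking supremum over $x\in X$ yields $d(f,g)<\epsilon$, as required. The only genuinely delicate point in the argument is ensuring that $\tilde f$ lands inside $U$, which is precisely why we constrained $\eta\le\delta_0$; everything else is a routine compactness-plus-partition-of-unity construction.
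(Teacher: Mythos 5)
Your proof is correct, and it is the standard argument: embed $Y$ isometrically into a Banach space, approximate $\iota\circ f$ by a Lipschitz map taking values in the convex hull of finitely many image points via a Lipschitz partition of unity, and compose with the Lipschitz retraction supplied by the ALNR hypothesis. The paper does not give its own proof of this lemma --- it simply cites \cite[Lemma 2.1]{basso2023geometric} --- but your argument is precisely the one that reference (and the folklore) uses, and all the details check out: the normalizing denominator $\sum_j\psi_j$ is bounded below on the compact space $X$, so the $\varphi_i$ are genuinely Lipschitz; the constraint $\eta\le\delta_0$ keeps $\tilde f(X)$ inside the domain $U$ of the retraction; and convexity of the Banach norm gives the pointwise estimate $\|\tilde f(x)-\iota(f(x))\|<\eta$.
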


A metric space \(Y\) is an \textit{absolute Lipschitz neighborhood retract} if there exists \(C>0\) such that whenever \(Y\subset Z\) for some metric space \(Z\), then there exists an open neighborhood $U\subset Z$ of \(Y\) and a \(C\)-Lipschitz retraction \(R\colon U \to Y\). Important examples of absolute Lipschitz neighborhood retracts are closed Riemannian manifolds, see e.g. \cite[Theorem~3.1]{hohti-1993}. In particular, using this fact and Lemma \ref{lemma: lip-approx-easy}, it is not difficult to show that the following holds. If $M$ is a closed Riemannian manifold and $\epsilon>0$, then there exists $\delta>0$ such that whenever $f,g\colon X \to M$ are Lipschitz maps from a metric space $X$ into $M$ with $d(f,g) < \delta$, then there exists a Lipschitz homotopy $H$ between $f$ and $g$ satisfying $d(H(x,s),H(x,t)) \leq \epsilon$ for every $x \in X$ and every $s,t$.

\subsection{Rectifiability}
Let $X$ be a complete metric space. We denote by $\Ha^n$ the Hausdorff $n$-measure on $X$ and normalize it such that $\Ha^n$ equals the Lebesgue measure $\leb^n$ on Euclidean space. A $\Ha^n$-measurable set $E \subset X$ is said to be \textit{$n$-rectifiable}, if there exist countably many Borel sets $K_i \subset \R^n$ and Lipschitz maps $\varphi_i \colon K_i \to X$ such that
\begin{equation}\label{eq: def-rect}
    \Ha^n\left( E \setminus \bigcup_i \varphi_i(K_i)\right) = 0.
\end{equation}

It follows from \cite[Lemma 4]{kirchheim1994rectifiable} that for any $n$-rectifiable subset $E \subset X$ there exist countably many bi-Lipschitz maps $\varphi_i \colon K_i \subset \R^n \to X$ satisfying \eqref{eq: def-rect} such that $\varphi_i(K_i)$ are pairwise disjoint, cf. \cite[Lemma 4.1]{ambrosio-kirchheim-2000}. We call a $\Ha^n$-measurable set $P \subset X$ \textit{purely $n$-unrectifiable} if $\Ha^n(P \cap E)=0$ for every $n$-rectifiable set $E \subset X$. 

In Sections \ref{sec:projection-lemma} and \ref{sec: approx-lip-mfld} we will need the following theorem, which is one of the main tools in the proof of the perturbation theorem \cite[Theorem 1.1]{bate-2020}. Note that the lower density assumption in \cite[Theorem 1.1]{bate-2020} has recently been removed in \cite{bate24}. Theorem \ref{thme:weak-perturb-bate} below is a direct consequence of Theorem 4.9, Theorem 2.21 and Observation 4.2 in \cite{bate-2020} together with \cite{bate24}.


\begin{theorem}\label{thme:weak-perturb-bate}
    Let $X$ be a complete metric space with finite Hausdorff $n$-measure and $f\colon X \to \R^n$ Lipschitz. Let $P \subset X$ be purely $n$-unrectifiable. Then, for any $\epsilon>0$ there exists a $(\LIP(f)+\epsilon)$-Lipschitz map $g \colon X \to \R^n$ satisfying
    \begin{enumerate}
        \item$ \norm{f(x)-g(x)}\leq \epsilon$ for each $x \in X$ and $f(x)=g(x)$ whenever $d(x,P)> \epsilon $;
        \item $\Ha^n(g(P)) \leq \epsilon.$
    \end{enumerate}
\end{theorem}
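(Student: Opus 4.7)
The plan is a localized perturbation argument combined with the structure theory of purely unrectifiable sets in \cite{bate-2020, bate24}. The overall strategy: cover $P$ by a Vitali family of small balls $\{B(x_i, r_i)\}$ with $r_i < \epsilon$, perturb $f$ locally on each ball so that the image of $P \cap B(x_i, r_i)$ collapses into a thin slab in some direction, and paste these perturbations using a partition of unity. Since the resulting perturbation is supported in $\bigcup_i B(x_i, 2r_i) \subset N_\epsilon(P)$, the condition that $g = f$ outside the $\epsilon$-neighborhood of $P$ is automatic once the radii are chosen small enough.

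First I would invoke the structural characterization of pure $n$-unrectifiability developed in \cite{bate-2020} (with the lower-density assumption removed in \cite{bate24}): the set $P$ admits no nontrivial $n$-dimensional Alberti representation, and consequently, after an arbitrarily small linear perturbation of $f$, the pushforward of $\Ha^n$ restricted to $P$ under $f$ can be made to concentrate on a set of arbitrarily small $\Ha^n$-measure. This provides the local input needed on each piece: for a chosen tolerance $\delta = \delta(\epsilon)$ and each ball $B(x_i, r_i)$, there is a Lipschitz map $g_i\colon X \to \R^n$ with $\LIP(g_i) \leq \LIP(f) + \delta$, $\norm{g_i - f}_\infty \leq \delta r_i$, and $\Ha^n(g_i(P \cap B(x_i, r_i))) \leq \delta r_i^n$.

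Next comes the pasting. With $\{\psi_i\}$ a Lipschitz partition of unity subordinate to the enlarged cover $\{B(x_i, 2r_i)\}$, I would set
\[
g(x) = f(x) + \sum_i \psi_i(x)\bigl(g_i(x) - f(x)\bigr).
\]
The cutoffs have pointwise Lipschitz constant $\lesssim 1/r_i$, but since $\norm{g_i - f}_\infty \lesssim \delta r_i$ on $B(x_i, 2r_i)$, the product rule contributes $\lesssim \delta$ per overlapping ball. The bounded overlap of Vitali covers then yields $\LIP(g) \leq \LIP(f) + \epsilon$ and $\norm{g - f}_\infty \leq \epsilon$ after choosing $\delta$ suitably small. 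The conditions (1) and the Lipschitz estimate are in this way reduced to a careful bookkeeping.

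The main obstacle will be condition (2), namely ensuring that the local reductions of image measure do not aggregate badly when pasted: a priori, the overlaps in the cover could multiply the image measure, and the partition-of-unity average could smear the collapsed slabs back into $\R^n$. I would handle the first issue by exploiting bounded overlap of the Vitali cover so that $\sum_i r_i^n \lesssim \Ha^n(N_\epsilon(P))$, and then choosing the local tolerance $\delta$ so that the bounds $\delta r_i^n$ sum to at most $\epsilon$. The second issue — that convex combinations of $f$ and $g_i$ may escape the thin slab achieved by $g_i$ — is the delicate point and is precisely what forces the local perturbations $g_i$ to collapse $P \cap B_i$ robustly, not only at one scale; this is where the full strength of the Alberti-representation structure in \cite{bate-2020, bate24} enters, guaranteeing that the collapse persists under the affine interpolation inside each ball.
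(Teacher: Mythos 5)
The paper does not give an independent proof of this theorem: it records it as a direct consequence of Theorem~4.9, Theorem~2.21 and Observation~4.2 in \cite{bate-2020} together with \cite{bate24}, so the ``proof'' in the paper is a citation to Bate's perturbation machinery, in which the localization (agreement with $f$ off the $\epsilon$-neighborhood of $P$) is already built in.

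Your proposal attempts a constructive, cover-and-paste argument, and it has two genuine gaps. First, the ``local input'' you invoke --- a $(\LIP(f)+\delta)$-Lipschitz $g_i$ with $\|g_i-f\|_\infty\leq\delta r_i$ and $\Ha^n(g_i(P\cap B(x_i,r_i)))\leq \delta r_i^n$ --- is essentially the theorem you are proving, applied at small scale. The absence of Alberti representations on a purely unrectifiable set does not directly produce such a local perturbation; in Bate's papers, extracting a perturbation from the Alberti-representation characterization is the entire non-trivial content, so this step is circular as written. Second, the pasting step does not close condition~(2). If $g_i$ sends $P\cap B_i$ into a thin slab near a hyperplane $H_i$ and $g_j$ does the same with a slab near a different $H_j$, then for $x$ in an overlap region the value $g(x)=f(x)+\sum_k\psi_k(x)(g_k(x)-f(x))$ is a convex combination that need not lie near any slab; as the weights $\psi_k(x)$ vary with $x$ over the overlap, the image can sweep out a region of full $n$-dimensional measure comparable to $r_i^n$, destroying the bound you want. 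Your closing remark that ``the collapse persists under the affine interpolation'' is precisely the unjustified assertion: nothing in the cited structure theory guarantees that distinct local collapses are mutually compatible under a partition-of-unity average. For the argument to go through one would need the local maps $g_i$ to collapse $P$ in a \emph{coherent} direction across overlaps, or to replace the convex-combination gluing by a different interpolation that preserves smallness of the image measure; neither is supplied, and neither follows from what you have cited.
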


\subsection{Simplicial complexes}
We refer to \cite{Spanier-alg-coh} for more information on simplicial complexes. Throughout this article, a simplicial complex consists always of simplices that are isometric toa  rescaling of some standard simplex in $\R^n$ and have the same side length. Let $\Sigma$ be a simplicial complex. For $k\geq 0$, we denote by $\Sigma^k$ the \textit{$k$-skeleton} of $\Sigma$, that is, the set of all closed simplices of dimension $k$ in $\Sigma$. Furthermore, we write $\mathcal{F}(\Sigma)$ for the set of all closed simplices of $\Sigma$. We say $\Sigma$ is $n$-dimensional if $\Sigma^n\ne\varnothing$ and $\mathcal{F}(\Sigma)= \bigcup_{k=0}^n\Sigma^k$. For a simplex $\Delta \in \mathcal{F}(\Sigma)$, the \textit{open star} of $\Delta$ is defined by
$$\st \Delta = \big\{ \inte(\tau) \colon \tau \in \mathcal{F}(\Sigma) \textup{ and } \Delta \subset \tau\big\},$$
where $\inte \tau$ denotes the interior of $\tau$. We consider the following metric on a simplicial complex. Let $I$ be a finite set and let $\Sigma(I)$ be the simplicial complex defined as follows
\begin{equation}\label{eq: def-simplicial-complex-l2}
    \Sigma(I) = \left\{ x  \in l^2(I) \colon x_i\geq 0 \textup{ and } \sum_i x_i =1 \right\}.
\end{equation}
Notice that, up to rescaling, any finite simplicial complex can be realized as a subcomplex of $\Sigma(I)$ with $I = \Sigma^0$. The \textit{$l^2$-metric} on $\Sigma$ is the metric induced by the $l^2$-norm on $l^2(I)$ and is denoted by $|\cdot|_{l^2}$. The diameter of a simplex in $\Sigma$ with respect to the $l^2$-metric is equal to $\sqrt{2} s$, where $s$ is the side-length of the simplices in $\Sigma$.
We conclude the section with the following lemma whose proof is analogous to \cite[Lemma 5.2]{basso2023lipschitz}.

\begin{lemma}\label{lemma: close-points-neighb-simplicies}
    Let $\Sigma$ be a simplicial complex equipped with the $l^2$-metric. If $\Delta,\Delta' \in \mathcal{F}(\Sigma)$ are two simplices with $\Delta\cap \Delta' \neq \emptyset$, then for all $x \in \Delta$ and $y \in \Delta'$ there exists $z \in \Delta\cap \Delta'$ satisfying
    $$|x-z|_{l^2}+|z-y|_{l^2} \leq 4 \sqrt{n} |x-y|_{l^2},$$
    where $n= \min\left\{ \dim \Delta, \dim \Delta'\right\}$.
\end{lemma}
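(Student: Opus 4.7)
My plan is to realize the simplicial complex $\Sigma$ as a subcomplex of $\Sigma(I)$ with $I=\Sigma^0$ via \eqref{eq: def-simplicial-complex-l2}, so that $\Delta=\mathrm{conv}\{e_i:i\in A\}$ and $\Delta'=\mathrm{conv}\{e_i:i\in B\}$ for some $A,B\subset I$. Since $\Sigma$ is a simplicial complex, the hypothesis $\Delta\cap\Delta'\ne\emptyset$ forces the intersection to be the common face $\mathrm{conv}\{e_i:i\in C\}$, where $C=A\cap B\ne\emptyset$. Writing $x=\sum_{i\in A}x_ie_i$ and $y=\sum_{i\in B}y_ie_i$, I assume without loss of generality $\dim\Delta\le\dim\Delta'$, so $n=|A|-1$, and focus on the substantive case $n\ge 1$. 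Set $a:=\sum_{i\in A\setminus C}x_i$.

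My proposed intermediate point is the $l^2$-orthogonal projection of $x$ onto the affine hull of $\Delta\cap\Delta'$, namely
$$z=\sum_{i\in C}\left(x_i+\frac{a}{|C|}\right)e_i\in\Delta\cap\Delta',$$
derived in one line via Lagrange multipliers; it lies in $\Delta\cap\Delta'$ by non-negativity of the coefficients. Two elementary inequalities drive the whole estimate. First, Cauchy--Schwarz on the $A\setminus C$ coordinates combined with $|A\setminus C|\le n$ yields
$$|x-y|_{l^2}^2\ge\sum_{i\in A\setminus C}x_i^2\ge\frac{a^2}{n},$$
so $a\le\sqrt n\,|x-y|_{l^2}$. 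Second, since $C=A\cap B$ forces $B\setminus C=B\setminus A$, the coordinates of $y$ there appear directly in $|x-y|_{l^2}^2$, giving
$$\sum_{i\in B\setminus C}y_i^2\le|x-y|_{l^2}^2$$
with \emph{no} dimensional factor.

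Plugging $z$ into $|x-z|_{l^2}^2$ and $|z-y|_{l^2}^2$, expanding, and applying $(u+v)^2\le 2u^2+2v^2$ together with the two estimates above yields $|x-z|_{l^2}^2\le 2a^2\le 2n|x-y|_{l^2}^2$ and $|z-y|_{l^2}^2\le(2n+3)|x-y|_{l^2}^2$. Summing the square roots gives $\sqrt{2n}+\sqrt{2n+3}\le(\sqrt 2+\sqrt 5)\sqrt n<4\sqrt n$ for $n\ge 1$, completing the proof. The main subtlety is the asymmetric choice of $z$: projecting the point from the \emph{smaller} simplex is what forces the constant to depend only on $n=\min(\dim\Delta,\dim\Delta')$; projecting $y$ instead would give $|y-z|_{l^2}^2\le 2b^2$ with $b$ bounded only by $\sqrt{\dim\Delta'}\,|x-y|_{l^2}$, which would spoil the final constant.
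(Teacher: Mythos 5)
Your proof is correct, self-contained, and complete for the substantive case $n\ge 1$ (which is the only case the paper ever invokes; as stated the lemma is vacuous for $n=0$ since the right-hand side collapses to $0$). Every step checks out: the point $z$ is indeed the orthogonal projection of $x$ onto $\mathrm{aff}(\Delta\cap\Delta')$ and lies in $\Delta\cap\Delta'$ by non-negativity and $\sum_{i\in C}z_i=1$; the identity $A\setminus C=A\setminus B$ gives $y_i=0$ there, so $\sum_{A\setminus C}x_i^2\le|x-y|^2$ and Cauchy--Schwarz with $|A\setminus C|\le n$ gives $a\le\sqrt n\,|x-y|$; the bound $\sum_{A\setminus C}x_i^2\le a^2$ follows from non-negativity of cross terms; and symmetrically $\sum_{B\setminus C}y_i^2\le|x-y|^2$. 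The resulting estimates $|x-z|^2\le 2a^2\le 2n|x-y|^2$ and $|z-y|^2\le(2n+3)|x-y|^2$ are correct, and $\sqrt{2n}+\sqrt{2n+3}\le(\sqrt2+\sqrt5)\sqrt n<4\sqrt n$ holds for all $n\ge 1$ since $2n+3\le 5n$. Your remark about the asymmetry (project from the \emph{smaller} simplex, since only $a$ benefits from the Cauchy--Schwarz factor $\sqrt n$, while $\sum_{B\setminus C}y_i^2$ is bounded with no dimensional loss) is exactly the right observation and explains why the constant only sees $\min\{\dim\Delta,\dim\Delta'\}$.

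The paper itself does not reproduce a proof but defers to an external reference (Lemma 5.2 of \texttt{basso2023lipschitz}), declaring it ``analogous,'' so a direct line-by-line comparison is not possible here. Your argument has the virtue of being fully explicit: a one-formula choice of $z$, two coordinate inequalities, and a numerical estimate. Whatever route the cited lemma takes, your proof stands on its own and produces the stated constant $4\sqrt n$ with room to spare.
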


\subsection{Nagata dimension}
A covering $\{U_i\}_i$ of a metric space \(X\) is said to have \textit{\(s\)-multiplicity} at most \(n\) if every subset of \(X\) with diameter less than \(s\) intersect at most \(n\) members of $\{U_i\}_i$.

\begin{definition}\label{def: nagata-dim}
The Nagata dimension $\dim_N X$ of a metric space is the smallest non-negative integer \(n\) such that there exists a constant $c>0$ with the following property: for every \(s>0\) there exists a covering with \(s\)-multiplicity at most \(n+1\) such that each member \(B\) of the covering satisfies \(\diam B \leq cs\).
\end{definition}

Every doubling metric space has finite Nagata dimension and the dimension depends only on the doubling constant. Furthermore, the Nagata dimension is always at least the topological dimension and can be seen as a quantitative version of it, which is more suitable for Lipschitz analysis. Indeed, while the topological dimension is an important tool for constructing continuous extensions, the Nagata dimension has been applied to Lipschitz extension problems; see \cite{lang-nagata}.  In both cases, an approximation of the space by a simplicial complex plays a crucial role. These simplicial complex are obtained as the nerve of the cover given by the corresponding dimension. See \cite{petersen-gh-limits} and \cite{semmes-poincare} for the concepts mentioned for the topological dimension. In contrast to the topological dimension, the approximation obtained by the Nagata dimension have a uniform behavior on all scales. The following version of such an approximation by a simplicial complex will be used in Section \ref{sec: approx-lip-mfld}.

\begin{theorem}\label{thm:simplicial-factorization}
     Let $X \subset l^\infty$ be compact with Nagata dimension $\leq n$. Then there exists a constant $C>0$ only depending on the data of $X$ such that for every $\epsilon>0$ there is a finite simplicial complex $\Sigma$ equipped with the $l^2$-metric and Lipschitz maps $\varphi\colon X \to \Sigma$, $\psi\colon \Sigma \to l^\infty$ with the following properties
     \begin{enumerate}
         \item $\varphi$ is $C$-Lipschitz and $\psi$ is $C$-Lipschitz on every simplex $\Delta$ in $\Sigma$;
         \item $\Sigma$ has dimension $\leq n$ and every simplex in $\Sigma$ is a Euclidean simplex of side length $\epsilon$;
         \item $\textup{Hull}(\varphi(X)) = \Sigma$ and $d(x,\psi(\varphi(x))) \leq C \epsilon$ for all $x \in X$.
    \end{enumerate}
\end{theorem}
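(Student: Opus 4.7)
The plan is to realise $\Sigma$ as a rescaled nerve of a Nagata-type cover of $X$ and construct $\varphi$ via a partition of unity and $\psi$ by choosing a representative in each cover element.

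First I would invoke $\dim_N X\le n$ at scale $s = \epsilon/c_0$ (with $c_0$ chosen so that the output simplices end up with side length exactly $\epsilon$ after rescaling) to obtain an open cover $\mathcal{U}=\{U_i\}_{i\in I}$ with $s$-multiplicity at most $n+1$ and $\diam U_i\le cs$. Compactness of $X$ lets me assume $I$ finite. The nerve $\mathcal N(\mathcal U)$ is then a finite simplicial complex of dimension at most $n$; realise it as a subcomplex of $\Sigma(I)$ from \eqref{eq: def-simplicial-complex-l2}, and rescale the embedding so that each simplex is a Euclidean simplex of side length $\epsilon$. Call the result $\Sigma$. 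This settles (2).

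Next I would define the Lipschitz bump functions $\rho_i(x)=\max\{s/2-d(x,U_i),0\}$; each $\rho_i$ is $1$-Lipschitz, supported in $N_{s/2}(U_i)$, and by enlarging the cover slightly I may assume that at each $x\in X$ at least one $\rho_i(x)\ge s/4$ and at most $n+1$ of them are nonzero (the latter by the $s$-multiplicity condition). The normalised functions $\tilde\rho_i=\rho_i/\sum_j\rho_j$ are then Lipschitz with a constant of order $1/\epsilon$, and I set
\[
\varphi(x) \defl \frac{\epsilon}{\sqrt2}\bigl(\tilde\rho_i(x)\bigr)_{i\in I}\in \Sigma.
\]
At each $x$ only the vertices corresponding to indices $i$ with $x\in U_i$ contribute, so $\varphi(x)$ lies in the simplex of $\Sigma$ indexed by $\{i:x\in U_i\}$. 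To estimate $\LIP(\varphi)$ in the $l^2$-metric, for $x,y\in X$ only $O(n)$ coordinates of $\varphi(x)-\varphi(y)$ are nonzero, so pulling out the factor $\epsilon/\sqrt 2$ and using the bound $O(1/\epsilon)$ on $\LIP(\tilde\rho_i)$ gives $\LIP(\varphi)\le C$ with $C$ depending only on $n$ and on the Nagata constants.

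For $\psi$, I would pick a point $y_i\in U_i$ for each $i\in I$ and extend the assignment $v_i\mapsto y_i$ affinely on each simplex of $\Sigma$; concretely, if $\Delta$ has vertices $v_{i_1},\dots,v_{i_k}$ and a point $p\in\Delta$ has barycentric coordinates $t_j$, set $\psi(p)=\sum_j t_j y_{i_j}\in l^\infty$. If $\Delta\in\mathcal F(\Sigma)$, then the sets $U_{i_1},\dots,U_{i_k}$ have common intersection, so any two of the chosen points $y_{i_j}$ lie at distance at most $2cs\le C'\epsilon$; this affine interpolation on a simplex of side length $\epsilon$ is therefore $C'$-Lipschitz, giving (1) together with the bound on $\varphi$. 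For the remaining half of (3), given $x\in X$ the point $\varphi(x)$ is a convex combination of vertices $v_i$ with $x\in U_i$, so $\psi(\varphi(x))=\sum_j\tilde\rho_j(x)y_j$ is a convex combination of points $y_j\in U_j$, each within $\diam U_j\le cs\le C''\epsilon$ of $x$; hence $d(x,\psi(\varphi(x)))\le C''\epsilon$. Finally, $\textup{Hull}(\varphi(X))=\Sigma$ because every simplex of the nerve corresponds to a non-empty intersection $U_{i_1}\cap\cdots\cap U_{i_k}$, and any $x$ in that intersection maps under $\varphi$ into the relative interior of that simplex.

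The only genuinely delicate point is arranging the lower bound on $\sum_j\rho_j$ that turns the multiplicity and diameter bounds from the Nagata dimension into a Lipschitz constant for $\tilde\rho_i$ that is exactly of order $1/\epsilon$; everything else is bookkeeping with constants that depend only on $n$ and on the Nagata constant $c$ from Definition \ref{def: nagata-dim}. This is handled in exactly the same way as in the proofs in \cite{lang-nagata} and \cite{basso2021undistorted}, and gives the constant $C$ claimed in the theorem.
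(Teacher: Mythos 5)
Your construction is indeed the right one. The paper itself proves this theorem by a one-line citation: since $l^\infty$ is injective, the statement follows from \cite[Proposition 6.1]{basso2021undistorted} after a rescaling. Your argument reconstructs the standard partition-of-unity and nerve proof underlying that proposition, and the estimates in (1)--(3) are correctly sketched.

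There is one inconsistency you should fix. You take $\Sigma$ to be the nerve $\mathcal N(\mathcal U)$ of the Nagata cover $\mathcal U=\{U_i\}$, but your bump functions $\rho_i(x)=\max\{s/2-d(x,U_i),0\}$ are positive on the enlarged sets $V_i:=N_{s/2}(U_i)$, not only on $U_i$. Consequently $\varphi(x)$ lies in the simplex spanned by $\{i:d(x,U_i)<s/2\}$, which is in general a strict superset of $\{i:x\in U_i\}$; the corresponding $U_i$ may have empty common intersection, so $\varphi(x)$ need not lie in $\mathcal N(\mathcal U)$. The fix is cheap: take $\Sigma$ to be the nerve of $\{V_i\}$ (or simply $\operatorname{Hull}(\varphi(X))$ inside that nerve). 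The multiplicity bound survives: if $\rho_{i_1}(x),\dots,\rho_{i_m}(x)>0$, then for each $j$ there is $b_j\in U_{i_j}$ with $d(x,b_j)<s/2$, so $\{b_1,\dots,b_m\}$ has diameter $<s$ and meets $m$ members of $\mathcal U$, forcing $m\le n+1$; hence $\dim\Sigma\le n$ as needed. With $\diam V_i\le (c+1)s$ in place of $\diam U_i\le cs$, the Lipschitz bounds on $\tilde\rho_i$, $\varphi$, $\psi$, and the $O(\epsilon)$ displacement $d(x,\psi(\varphi(x)))$ all go through as you wrote them. Incidentally, the ``enlarging the cover so that some $\rho_i(x)\ge s/4$'' step is unnecessary: since $\mathcal U$ covers $X$, some $i$ has $x\in U_i$ and hence $\rho_i(x)=s/2$ automatically.
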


Here, $\textup{Hull}(A)$ denotes the \textit{hull} of $A$ in $\Sigma$ and is the smallest subcomplex of $\Sigma$ containing $A$.

\begin{proof}
    Since $l^\infty$ is an injective metric space the theorem follows directly from \cite[Proposition 6.1]{basso2021undistorted} and a simple rescaling argument.
\end{proof}

\subsection{Orientation and degree}
In the following we introduce the notion of degree and local index of a continuous map between manifolds. For more details we refer to \cite{Heinonen-Keith}, \cite{heinonen-rickman02} and \cite{heinonen-sullivan02}. Let $X$ be a topological $n$-manifold. We always assume $X$ to be connected. For $U \subset X$ we denote by $H^n_c(U)$ the Alexander-Spanier cohomology groups of $U$, with compact supports and coefficients in $\Z$, see e.g \cite{bredon2012sheaf}. An open and connected subset $D \subset X$ is called a \textit{domain}. We say $X$ is \textit{orientable} if $H^n_c(X)\cong	\Z$. A generator $g_X$ of $H^n_c(X)$ is called an orientation. If such an orientation exists, then it induces an orientation $g_D$ on every domain $D \subset X$ by the standard isomorphism $H_c^n(D) \to H_c^n(X)$. An orientable manifold with such choice of an orientation is said to be oriented.

\medskip Let $f\colon X \to Y$ be a continuous map between two oriented topological $n$-manifolds. If $D\subset X$ is a relatively compact domain, then for each connected component $V$ of $Y\setminus f(\partial D)$ there exists a homomorphism $H_c^n(V) \to H_c^n(D)$. This map sends the generator of $H_c^n(V)$ to an integer multiple of the generator of $H_c^n(D)$. We denote this integer by $\mu(f,D,y)$ and call it the \textit{local degree} of $f$ at $y \in V$ with respect to $D$. Notice that the definition does not depend on the choice of $y \in V$. It follows that the local degree 
\begin{align*}
\mu(f,D,\cdot) \colon &Y\setminus f(\partial D) \to \Z \\
        & y \mapsto \mu(f,D,y)
\end{align*}
is locally constant. Let $y \in Y \setminus f(\partial D)$ and suppose that there exists a domain $U \subset D$ such that $U \cap f^{-1}(y)$ consists only of a single point $x$. Then, we denote the value of $\mu(f,U,y)$ by $\iota(f,x)$ and call it the \textit{local index} of $f$ at $x$. If $X$ and $Y$ are closed, oriented manifolds, then $\mu(f,X,y)$ has the same value for each $y \in Y$. In this case, we simply write $\deg(f)$ and call it the \textit{degree} of $f$. We will make use of the following basic properties of the degree; see e.g. \cite{rado-degree}.

\begin{lemma}\label{lemma: basic-properties-degree}
    Let $f\colon X \to Y$ be a continuous map between two oriented $n$-manifolds and $D \subset X$ a relatively compact domain.
    \begin{enumerate}
        \item If $y \in Y \setminus f(\overline{D})$ then $\mu(f,D,y)=0.$
        \item If $y \in Y\setminus f(\partial D)$ and $D_1,\dots,D_k \subset D$ are pairwise disjoint domains such that $D \cap f^{-1}(y)\subset \bigcup_i^k D_i$ then
        $$\mu(f,D,y) = \sum_i^k \mu(f,D_i,y).$$
        \item If $y \in Y\setminus f(\partial D)$ and $g \colon X \to Y$ is continuous such that there exists a homotopy $H$ between $f$ and $g$ satisfying $y \neq H(x,t)$ for all $x \in \partial D$ and each $t$. Then $\mu(f,D,y) = \mu(g,D,y)$.
        \item If $T \colon \R^n \to \R^n$ is a linear bijection then $\mu(T,U, y) = \sgn (\det T)$ for every domain $U \subset \R^n$ and each $y \in T(U)$.
    \end{enumerate}
\end{lemma}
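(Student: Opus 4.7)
The plan is to derive all four properties directly from the cohomological definition of $\mu(f,D,y)$ given in the paragraphs preceding the lemma: it is the integer such that the homomorphism $H^n_c(V) \to H^n_c(D)$ induced (via restriction and extension by zero) by $f|_D$ sends the orientation generator $g_V$ to $\mu(f,D,y) \cdot g_D$, where $V$ is the component of $Y \setminus f(\partial D)$ containing $y$. The four statements will then follow, essentially by formal arguments, from functoriality, excision, and homotopy invariance of Alexander--Spanier compactly supported cohomology.

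For (1), I would first replace $V$ by a smaller open neighborhood $V' \subset V$ of $y$ with $V' \cap f(\overline{D}) = \emptyset$; this is possible since $f(\overline{D})$ is compact. Then $f^{-1}(V') \cap D = \emptyset$, so the induced map $H^n_c(V') \to H^n_c(D)$ factors through $H^n_c(\emptyset) = 0$, forcing $\mu(f,D,y) = 0$. For (3), given the homotopy $H$ with $y \ne H(x,t)$ for $x \in \partial D$, compactness of $\partial D \times [0,1]$ allows me to choose a neighborhood $V'$ of $y$ avoiding $H_t(\partial D)$ for every $t$. Then $f$ and $g$ induce the same homomorphism $H^n_c(V') \to H^n_c(D)$, since the maps of pairs $(\overline{D}, \partial D) \to (Y, Y \setminus V')$ given by $H_t$ are homotopic through maps of pairs. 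Property (4) reduces to the fact that a linear bijection $T \colon \R^n \to \R^n$ acts on $H^n_c(\R^n) \cong \Z$ as multiplication by $\sgn(\det T)$: this can be checked by decomposing $T$ into elementary matrices, or by observing that $T$ either preserves or reverses the standard orientation of $\R^n$ according to the sign of its determinant; applying the cohomological definition to $\mu(T,U,y)$ with $y \in T(U)$ then yields the claim.

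The main obstacle will be property (2), where one must decompose the induced cohomology map as a finite sum indexed by the $D_i$. The plan is as follows. By compactness of $D \cap f^{-1}(y) \subset \bigcup_{i=1}^k D_i$ and continuity of $f$, I choose a connected neighborhood $V' \subset V$ of $y$ small enough that $V'$ avoids $f(\partial D_i)$ for every $i$ and $D \cap f^{-1}(V') \subset \bigcup_i D_i$. Writing $D \cap f^{-1}(V')$ as the disjoint union of the open sets $D_i \cap f^{-1}(V')$ and using that compactly supported cohomology of a disjoint union is the direct sum of the pieces, the map $H^n_c(V') \to H^n_c(D)$ decomposes as the sum of the maps $H^n_c(V') \to H^n_c(D_i) \hookrightarrow H^n_c(D)$, each of which records $\mu(f,D_i,y)$, yielding the additivity formula. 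The delicate point here is to verify that the inclusions $D_i \hookrightarrow D$ are compatible with the orientations and that the decomposition of the induced cohomology map is on the nose; this is the only step in the proof that requires more than formal manipulation.
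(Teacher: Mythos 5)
The paper does not prove this lemma; it is stated with the remark ``see e.g.\ \cite{rado-degree}'' and used as a black box. So there is no paper proof to compare against, and the relevant question is simply whether your sketch is a correct outline of the standard argument. It is, and it follows the route one finds in the cohomological treatments the paper cites (Heinonen--Rickman and related sources): the local degree is read off from the homomorphism $H^n_c(V)\to H^n_c(D)$ obtained by pulling back along $f$ restricted to $f^{-1}(V)\cap D$ and then extending by zero to $D$, and all four items follow from naturality, excision, and homotopy invariance of compactly supported Alexander--Spanier cohomology, together with the compatibility $H^n_c(W')\to H^n_c(W)$ of orientation generators for connected open subsets $W'\subset W$ of an oriented manifold.

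Two small points you should nail down if you write this out in full. First, in (2), before shrinking $V$ you need to know $y\notin f(\partial D_i)$ for each $i$; this is not an explicit hypothesis but follows from disjointness of the $D_i$: a point of $\partial D_i$ lying in $D$ cannot lie in any $D_j$, so it is in $D\setminus\bigcup_j D_j$ and hence not in $f^{-1}(y)$, while points of $\partial D_i$ lying in $\partial D$ are excluded by assumption. Once this is settled, the neighborhood $V'$ is chosen to avoid the compact set $f\bigl(\overline D\setminus\bigcup_i D_i\bigr)\cup\bigcup_i f(\partial D_i)$, which handles both of your requirements at once. Second, the decomposition of the cohomology map in (2) should be routed through $H^n_c(f^{-1}(V')\cap D)=\bigoplus_i H^n_c(f^{-1}(V')\cap D_i)$, and then one uses that each inclusion $D_i\hookrightarrow D$ sends $g_{D_i}\mapsto g_D$, since both generators are induced from the same orientation class of $X$. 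You flag this as the delicate step, which is the right instinct; it is where the additivity comes from and is not purely formal. With those details filled in the sketch is a complete and correct proof.
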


\begin{remark}\label{rmk:local-index-exists}
Let $X$ be a metric manifold with $\Ha^n(X)<\infty$, and let $f\in \LIP(X,\R^n)$. By the co-area inequality \cite[2.10.25]{federer-gmt} we have that $\#f\inv(y)<\infty$ for almost every $y\in \R^n$, and for such $y$ it is easy to see that $\iota(f,x)=\mu(f,D_x,y)$ exists for each $x\in f\inv(y)$. In particular if $f|_E$ is bi-Lipschitz, then $\iota(f,x)$ exists for $\Ha^n$-almost every $x\in E$.
\end{remark}
We record here a result which rephrases Theorem \ref{thme: main}(2) in slightly different terms that will be useful in the sequel.
\begin{lemma}\label{lemma: equiv-def-degree-bound}
Let $X$ be a metric $n$-manifold with $\Ha^n(X)<\infty$, and let $D\ge 1$. The following are equivalent.
\begin{itemize}
    \item[(i)] For all $f\in \LIP(X,\R^n)$ and $E\subset X$ with $f|_E$ bi-Lipschitz we have that $|\iota(f,x)|\le D$ for $\Ha^n$-almost every $x\in E$;
    \item[(ii)] For all $f\in \LIP(X,\R^n)$ and $n$-rectifiable $E\subset X$ we have that, for almost every $y\in \R^n$, $\#(f\inv(y)\cap E)<\infty$ and $|\iota(f,x)|\le D$ for each $x\in f\inv(y)\cap E$.
\end{itemize}
\end{lemma}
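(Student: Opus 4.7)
The plan is to prove the two implications using the area formula on the rectifiable set $E$, supplemented by a Lusin-Rademacher decomposition for the harder direction (i)$\Rightarrow$(ii).

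For (ii)$\Rightarrow$(i), the key observation is that if $f|_E$ is $L$-bi-Lipschitz, then $E$ itself is $n$-rectifiable (since $f(E)\subset\R^n$ and $f\inv$ is Lipschitz on $f(E)$). Setting $N=\{x\in E:|\iota(f,x)|>D\text{ or }\iota(f,x)\text{ does not exist}\}$, the hypothesis (ii) applied with this $E$ forces $f(N)$ to have Lebesgue measure zero (for a.e.\ $y$, every $x\in f\inv(y)\cap E$ is bounded and has index defined, so no such $x$ lies in $N$). Since $(f|_E)\inv$ is $L$-Lipschitz, we conclude $\Ha^n(N)\le L^n\leb^n(f(N))=0$, giving (i).

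For the converse (i)$\Rightarrow$(ii), let $f\in\LIP(X,\R^n)$ and $E$ be $n$-rectifiable. Using the decomposition recalled after \eqref{eq: def-rect}, write $E = N_0\sqcup\bigsqcup_i\varphi_i(K_i)$ with $\Ha^n(N_0)=0$, bi-Lipschitz $\varphi_i\colon K_i\to X$ and $K_i\subset\R^n$ Borel. For each $i$ the map $g_i=f\circ\varphi_i\colon K_i\to\R^n$ is Lipschitz, hence Rademacher-differentiable a.e. Combining Lusin's theorem with a standard decomposition of Lipschitz maps $\R^n\to\R^n$ via the Jacobian (see e.g.\ \cite[3.2.2]{federer-gmt}), we split $K_i$ modulo a $\leb^n$-null set into countably many Borel pieces $A_{ij}$ on each of which $g_i$ is bi-Lipschitz. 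Then $f|_{\varphi_i(A_{ij})}$ is bi-Lipschitz, so hypothesis (i) yields $|\iota(f,x)|\le D$ $\Ha^n$-a.e.\ on $\varphi_i(A_{ij})$. Gathering the exceptional sets (including $\varphi_i$-images of the Rademacher null sets) we obtain a Borel $M\subset E$ with $\Ha^n(M)=0$ such that $|\iota(f,x)|\le D$ wherever defined on $E\setminus M$.

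To convert this a.e.\ bound into the fiberwise bound demanded by (ii), apply the area formula on the rectifiable set $E$ (see e.g.\ \cite{ambrosio-kirchheim-2000}) to the Lipschitz map $f$:
\[
\int_{\R^n}\#(f\inv(y)\cap M)\,\ud y=\int_M J_f\,\ud\Ha^n=0,\qquad \int_{\R^n}\#(f\inv(y)\cap E)\,\ud y\le\LIP(f)^n\Ha^n(E)<\infty.
\]
Therefore, for $\leb^n$-a.e.\ $y\in\R^n$ we have both $\#(f\inv(y)\cap E)<\infty$ and $f\inv(y)\cap M=\varnothing$. By Remark \ref{rmk:local-index-exists}, for a.e.\ $y$ the local index $\iota(f,x)$ also exists at every $x\in f\inv(y)$. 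Intersecting these full-measure sets gives (ii).

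The only nontrivial step is the Lusin-Rademacher decomposition of each $g_i=f\circ\varphi_i$ into bi-Lipschitz pieces; this is a standard fact for Lipschitz maps $\R^n\to\R^n$, but is the crucial bridge that lets us feed the bi-Lipschitz hypothesis (i) into a general rectifiable $E$ in (ii). Everything else is bookkeeping with the area formula and Remark \ref{rmk:local-index-exists}.
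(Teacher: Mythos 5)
Your (ii)$\Rightarrow$(i) direction is correct and essentially the same as the paper's (the paper compresses it to a one-liner, but the reasoning — $E$ is rectifiable, bad points have Lebesgue-null image under $f$, pull back via the Lipschitz inverse — is exactly what you spell out).

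The (i)$\Rightarrow$(ii) direction has a genuine gap. You claim that each $K_i$ can be split ``modulo a $\leb^n$-null set into countably many Borel pieces $A_{ij}$ on each of which $g_i$ is bi-Lipschitz.'' This is not what Federer 3.2.2 (or any Lusin--Rademacher decomposition) gives you. That lemma only decomposes the set $\{J g_i > 0\}$ into bi-Lipschitz pieces; the \emph{degenerate set} $\{J g_i = 0\}$ is left over, and it can have positive $\leb^n$-measure (e.g.\ if $f$ is constant on a set of positive measure). Consequently, your set $M$ cannot in general satisfy $\Ha^n(M)=0$: if you include the degenerate part in $M$ you lose the null-measure claim, and if you exclude it you lose the assertion that $|\iota(f,x)|\le D$ on $E\setminus M$, since hypothesis (i) says nothing about points with no bi-Lipschitz piece through them.

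The correct statement — and this is what the paper extracts from Kirchheim's Lemma 4 and Corollary 8 — is that $E$ decomposes into bi-Lipschitz pieces $E_i$ plus a residual set $N$ whose \emph{image} $f(N)$ is $\leb^n$-null (Sard-type), not that $N$ itself is $\Ha^n$-null. With that replacement your argument closes: the exceptional set $M$ consists of (a) the degenerate parts, where $J_f=0$ $\Ha^n$-a.e., and (b) a $\Ha^n$-null remainder from the pieces and from (i); either way $\int_M J_f\,\ud\Ha^n=0$, so the area formula still yields $\#(f\inv(y)\cap M)=0$ for $\leb^n$-a.e.\ $y$, and the rest of your proof goes through unchanged. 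So the fix is small, but as written, the claim ``$\Ha^n(M)=0$'' is false and needs to be replaced by ``$\leb^n(f(M))=0$'' (equivalently $\int_M J_f=0$), and the decomposition step needs the correct citation that separately tracks the degenerate set.
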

\begin{proof}
Clearly (ii)$\implies$(i) since, if $f\in \LIP(X,\R^n)$ and $E\subset X$ are such that $f|_E$ is bi-Lipschitz, then $\overline E$ is rectifiable. 

\medskip It therefore remains to prove the implication (i)$\implies$(ii). By Remark \ref{rmk:local-index-exists}, for almost every $y\in \R^n$ we have that $\iota(f,x)$ exists for each $x\in f\inv(y)$. Using \cite[Corollary 8]{kirchheim1994rectifiable} and \cite[Lemma 4]{kirchheim1994rectifiable} we find pairwise disjoint sets $E_i\subset E$ such that $f|_{E_i}$ is bi-Lipschitz and $\leb^n(f(N))=0$, where $\displaystyle N=E\setminus \bigcup_iE_i$. Thus for almost every $y\in \R^n$ we have that $\displaystyle f\inv(y)\cap E\subset \bigcup_iE_i$, from which (i) implies that $|\iota(f,x)|\le D$ for each $x\in f\inv(y)\cap E$, for almost every $y\in \R^n$.
\end{proof}

\section{Metric currents}\label{sec: currents}
We recall the basic definitions and facts about the theory of metric currents and refer to \cite{ambrosio-kirchheim-2000,lang-local} for more information.
Let $X$ be a complete metric space. We write $\mathcal{D}^k(X)$ for the set of all $(k+1)$-tuples $(f,\pi_1,\dots,\pi_k) \in \LIP(X,\R)^{k+1}$ of Lipschitz functions, with \(f\) bounded.

\begin{definition}\label{def:metric-current}
A multilinear map \(T\colon \mathcal{D}^k(X)\to \R\) is called metric \(k\)-current (of finite mass) if the following holds:
\begin{enumerate}
    \item(continuity) If \(\pi_i^{j}\) converges pointwise to \(\pi_i\) for every \(i=1, \ldots, k\), and \(\LIP \pi_i^{j}< C\) for some uniform constant \(C>0\), then 
    \[
    T(f, \pi_1^{j}, \ldots, \pi_k^{j})\to T(f, \pi_1, \ldots, \pi_k)
    \] as \(j\to \infty\);
    \item(locality) \(T(f, \pi_1, \ldots, \pi_k)=0\) if for some \(i\in\{1, \ldots, k\}\) the function \(\pi_i\) is constant on \(\{ x\in X : f(x) \neq 0\}\);
    \item(finite mass) There exists a finite Borel measure \(\mu\) on \(X\) such that
    \begin{equation}\label{eq:mass-inequality}
    |T(f, \pi_1, \ldots, \pi_k)| \leq \prod_{i=1}^k \LIP \pi_i \int_{X} |f(x)| \, d\mu(x)
    \end{equation}
    for all \((f, \pi_1, \ldots, \pi_k)\in \mathcal{D}^k(X)\).
    \end{enumerate}
\end{definition}

The minimal measure \(\mu\) satisfying \eqref{eq:mass-inequality} is called the \textit{mass measure} of \(T\) and we denote it by \(\norm{T}\). The \textit{support} of \(T\) is defined as
\[
\spt T=\big\{ x\in X : \norm{T}(B(x, r)) >0 \text{ for all } r>0\big\}.
\] 
We let $\bM_k(X)$ be the vector space of all metric $k$-currents on $X$ and $\mass(T) = \norm{T}(X)$ the mass of $\bM_k(X)$. It follows from a density argument that each metric $k$-current $T$ can be extended uniquely to $L^1(\norm{T}) \times \LIP^k(X)$. For a Borel set $B \subset X$, we write $T \on B$ for the restriction of $T$ to $B$ defined by 
\[
(T\on B)(f, \pi_1\ldots, \pi_k)=T(\mathbbm{1}_B \cdot f, \pi_1, \ldots, \pi_k).
\]
It is a well-defined metric \(k\)-current and satisfies \(\norm{T\on B}=\norm{T}\on B\). We say a sequence of metric $k$-currents $T_i\in \bM_k(X)$ converges weakly to $T \in \bM_k(X)$ if
$$\lim_{i \to \infty}T_i(f,\pi_1,\dots,\pi_k) = T(f,\pi_1,\dots,\pi_k)$$
for all $(f,\pi_1,\dots, \pi_{k})\in \mathcal{D}^k(X)$ and we write $T_i \rightharpoonup T$. For $T \in \bM_k(X)$ the \textit{boundary} of $T$ is the multilinear map on $\mathcal{D}^{k-1}(X)$ defined by
$$\partial T (f,\pi_1,\dots,\pi_{k-1}) = T(1,f,\pi_1,\dots,\pi_{k-1})$$
for all $(f,\pi_1,\dots,\pi_{k-1})\in \mathcal{D}^{k-1}(X)$. The boundary $\partial T$ satisfies the continuity and locality property. Therefore, if $\partial T$ has finite mass, then it is a metric $(k-1)$-current. In this case, we call $T$ a normal $k$-current. We write $\bN_k(X)$ for the set of all normal $k$-currents in $X$ and define $\bN(T) = \bM(T) + \bM(\partial T)$.

Let $\varphi\colon X \to Y$ be a Lipschitz map and $T \in \bM_k(X)$, the \textit{pushforward} of $T$ under $\varphi$ is the metric $k$-current in $Y$ given by
$$\varphi_\# T(f,\pi_1,\dots,\pi_k) = T(f\circ \varphi,\pi_1\circ \varphi,\dots,\pi_k\circ\varphi)$$
for all $(f,\pi_1,\dots, \pi_{k})\in \mathcal{D}^k(Y)$. The following properties can be verified directly from the definitions: $(\varphi_\#T)\on B = \varphi_\#(T\on\varphi^{-1}(B))$, the support of $\varphi_\#T$ is contained in the closure of $\varphi(\spt T)$ and $\bM(\varphi_\# T) \leq \LIP(\varphi)^k \bM(T)$.

\subsection{Integer rectifiable and integral currents}
In Euclidean space, the standard example of a metric $k$-current is given by integration. Indeed, let $\theta \in L^1(\R^k)$, then it defines a metric $k$-current $\bb{\theta} \in \bM_k(\R^k)$ as follows
$$\bb{\theta}(f,\pi_1,\dots,\pi_k) = \int_{\R^k} \theta f \det(D \pi) \; d\leb^k$$
for all $(f,\pi) = (f,\pi_1,\dots,\pi_k) \in \mathcal{D}^k(\R^k).$ These currents are used as building blocks for integer rectifiable currents in metric spaces in analogy to the definition of rectifiable sets. Let $X$ be a complete metric space. We say a $k$-current $T \in \bM_k(X)$ is integer rectifiable if there exist countably many compact sets $K_i \subset \R^k$ and $\theta_i \in L^1(\R^k,\Z)$ with $\spt \theta_i \subset K_i$ and bi-Lipschitz maps $\varphi_i \colon K_i \to X$ such that
\[
T=\sum_{i\in \N} \varphi_{i\#}\bb{\theta_i} \quad \text{ and } \quad \mass(T)=\sum_{i\in \N} \mass(\varphi_{i\#}\bb{\theta_i}).
\]
An integer rectifiable current that is also a normal current is called integral current. We write $\biI_k(X)$ and $\bI_k(X)$ for the space of all integer rectifiable currents and integral $k$-currents in $X$, respectively. Finally, an integral current $T$ without boundary $\partial T = 0$ is called an integral cycle. We record the following two technical lemmas for use later on.

\begin{lemma}\label{lemma: integer-currents-pushfwrd-lip-constant}
    Let $E\subset X$ be Borel and $f\colon X \to \R^k$ Lipschitz. If $f$ restricted to $E$ is $L$-Lipschitz, then
    $$\norm{f_\# T}(B) \leq L^k \norm{T}(f^{-1}(B))$$
    for every $T\in \bM_k(X)$ concentrated on $E$ and every Borel set $B \subset \R^k$.
\end{lemma}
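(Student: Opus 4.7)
My plan is to use the minimality characterization of the mass measure. Define the finite Borel measure $\mu$ on $\R^k$ by $\mu(B) := L^k \|T\|(f^{-1}(B))$. It suffices to verify that $\mu$ satisfies the mass inequality \eqref{eq:mass-inequality} for the current $f_\#T$; the minimality of $\|f_\#T\|$ then yields $\|f_\#T\|\le \mu$, which is the claim on every Borel set.

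Fix $(g, \pi_1, \ldots, \pi_k)\in \mathcal{D}^k(\R^k)$. First, I would expand
$$f_\#T(g,\pi_1,\ldots,\pi_k) = T(g\circ f, \pi_1\circ f, \ldots, \pi_k\circ f)$$
and use the hypothesis that $T$ is concentrated on $E$ (so $T = T\on E$) to rewrite this as $T(\mathbbm{1}_E(g\circ f), \pi_1\circ f, \ldots, \pi_k\circ f)$. The idea is that only the values of $\pi_i\circ f$ on $E$ should contribute, and on $E$ the composition $\pi_i\circ f$ is $L\Lip(\pi_i)$-Lipschitz rather than $\LIP(f)\Lip(\pi_i)$-Lipschitz. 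To exploit this, I would extend each real-valued function $(\pi_i\circ f)|_E$ to an $L\Lip(\pi_i)$-Lipschitz map $\tilde\pi_i\colon X\to\R$ via McShane's theorem; because the target is $\R$, this extension preserves the Lipschitz constant with no dimensional loss.

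The core step is then to argue, via the locality axiom, that
$$T(\mathbbm{1}_E (g\circ f), \pi_1\circ f, \ldots, \pi_k\circ f) = T(\mathbbm{1}_E (g\circ f), \tilde\pi_1, \ldots, \tilde\pi_k),$$
since each difference $\pi_i\circ f - \tilde\pi_i$ vanishes on $E\supset\{\mathbbm{1}_E(g\circ f)\ne 0\}$. Applying the mass inequality for $T$ to the right-hand side with the new test functions of Lipschitz constant at most $L\Lip(\pi_i)$ produces
$$|T(\mathbbm{1}_E (g\circ f), \tilde\pi_1, \ldots, \tilde\pi_k)| \le L^k\prod_{i=1}^k \Lip(\pi_i)\int_X |g\circ f|\, d\|T\| = \prod_{i=1}^k \Lip(\pi_i)\int_{\R^k}|g|\,d\mu,$$
where the last identity comes from the change of variables $L^k\int_X|g\circ f|\,d\|T\| = L^k\int_{\R^k}|g|\,df_*\|T\| = \int_{\R^k}|g|\,d\mu$. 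This is precisely the mass inequality for $f_\#T$ with measure $\mu$.

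The main subtlety is justifying the locality substitution. The Ambrosio--Kirchheim axiom as stated in Definition~\ref{def:metric-current} says $T(h, \pi_1, \ldots, \pi_k) = 0$ whenever some $\pi_i$ is constant on $\{h\ne 0\}$. The substitution I need is the standard consequence that $T(h,\pi_1,\ldots,\pi_k)$ depends on each $\pi_i$ only through its values on $\{h\ne 0\}$; this follows by applying the axiom to the difference $\pi_i\circ f - \tilde\pi_i$ (which is identically zero, hence constant, on $\{h\ne 0\}$ with $h = \mathbbm{1}_E(g\circ f)$) and iterating one slot at a time via multilinearity. Once this is granted, the rest is routine bookkeeping.
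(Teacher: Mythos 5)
Your proof is correct, but it takes a genuinely different technical route from the paper. The paper regards $T$ as a current $T_E$ living on the closed subspace $\overline{E}\subset X$ (using Lang's restriction-to-subspace result), observes that $f|_{\overline E}$ is $L$-Lipschitz by continuity, notes $\|T_E\|\le\|T\|$, and then applies the mass inequality \emph{for $T_E$} directly to the test functions $\pi_j\circ f|_{\overline E}$. You instead stay on $X$, replace each $\pi_j\circ f$ by a McShane extension $\tilde\pi_j$ of $(\pi_j\circ f)|_E$ with the improved Lipschitz constant $L\,\LIP(\pi_j)$, and justify the swap via (iterated, multilinear) locality since $\pi_j\circ f-\tilde\pi_j$ vanishes on $\{\mathbbm{1}_E(g\circ f)\ne 0\}\subset E$. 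Both are standard and sound. Your version is more self-contained in that it avoids citing the restriction machinery, at the cost of needing two background facts you should make explicit: (i) the locality axiom in the strong, pointwise form used here (constancy on $\{h\ne 0\}$, not merely on a neighborhood — this is what the paper's Definition~\ref{def:metric-current} states, but note it is the \emph{a posteriori} strengthened version of the original Ambrosio--Kirchheim axiom), and (ii) that locality persists for the canonical extension of $T$ to $L^1(\|T\|)$ functions in the zeroth slot, since $\mathbbm{1}_E(g\circ f)$ is not Lipschitz. You flag (ii) as a subtlety, which is fair, and it is indeed a standard fact (AK, Theorem~3.5 and its surroundings), so there is no gap — just a reliance on different standard ingredients than the paper uses. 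One minor notational slip: you write $\Lip(\pi_i)$ (the pointwise Lipschitz constant in this paper's notation) where you mean the global constant $\LIP(\pi_i)$.
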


\begin{proof}
Since $\|T\|(X\setminus E)=0$, we may regard $T\in \bM_k(X)$ as a current $T_E$ in $\overline E\subset X$ (cf. \cite[Proposition 3.3]{lang-local}). Note that $f\colon\overline E\to\R^k$ is $L$-Lipschitz and that $\|T_E\|\le \|T\|$. For any $(\pi_0,\ldots, \pi_k)\in \mathcal D_k(\R^k)$ with $\LIP(\pi_j)\le 1$ for $j=1,\ldots,k$, we have that 
\begin{align*}
f_\#T(\pi_0,\ldots,\pi_k)&=T(\pi_0\circ f,\ldots,\pi_k\circ f)= T_E(\pi_0\circ f|_{\overline E},\ldots, \pi_k\circ f|_{\overline E})\\
& \le \LIP(\pi_1\circ f|_{\overline E})\cdots \LIP(\pi_k\circ f|_{\overline E})\int_{\overline E}|\pi_0\circ f|_{\overline E}|\ud\|T_E\|\\
& \le L^k\int |\pi_0\circ f|\ud\|T\|,
\end{align*}
which implies that $\|f_\# T\|\le L^kf_\#\|T\|$. The claim follows.
\end{proof}

\begin{lemma}\label{lem:rect-criterion}
Let $X$ be a metric space with $\Ha^n(X)<\infty$ and $T\in \bN_k(X)$. Suppose that $f_\#(T\on U)\in \biI_k(\R^k)$ for every $f\in \LIP(X,\R^k)$ and every open set $U\subset X$ with $\Ha^{n-1}(\partial U)<\infty$. Then $T\in \bI_k(X)$.
\end{lemma}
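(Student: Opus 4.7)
The plan is to reduce the statement to an Ambrosio--Kirchheim--style rectifiability criterion for normal currents, according to which $T\in \bN_k(X)$ lies in $\bI_k(X)$ provided $f_\#(T\on B)\in \biI_k(\R^k)$ for every Borel $B\subset X$ and every $f\in \LIP(X,\R^k)$; see \cite{ambrosio-kirchheim-2000}. The hypothesis already supplies this condition when $B$ is an open set with $\Ha^{n-1}(\partial B)<\infty$, so the task is to upgrade it to all Borel subsets using Dynkin's $\pi$--$\lambda$ lemma.

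Fix $f\in \LIP(X,\R^k)$ and let
\[
\mathcal{A}=\{B\subset X \text{ Borel}: f_\#(T\on B)\in \biI_k(\R^k)\}.
\]
I would first check that $\mathcal{A}$ is a $\lambda$-system: $X\in\mathcal{A}$ by taking $U=X$ in the hypothesis (its topological boundary in $X$ is empty); closure under complements follows from $f_\#(T\on B^c)=f_\# T - f_\#(T\on B)$ together with the fact that $\biI_k(\R^k)$, identified with $L^1(\R^k,\Z)$-valued top-forms, is a group under addition; and closure under countable disjoint unions follows from the mass estimate $\mass(f_\#(T\on A))\le \LIP(f)^k\,\norm{T}(A)$ of Lemma \ref{lemma: integer-currents-pushfwrd-lip-constant} combined with the $\sigma$-additivity of $\norm{T}$, since the partial sums $\sum_{j\le N}f_\#(T\on B_j)\in \biI_k(\R^k)$ converge in mass to $f_\#(T\on\bigsqcup_j B_j)$, and $\biI_k(\R^k)$ is closed under $L^1$-convergence of densities (an a.e.\ limit of integer-valued densities is integer-valued).

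Next I would produce a $\pi$-system $\mathcal{P}\subset \mathcal{A}$ generating the Borel $\sigma$-algebra relevant to $T$. Since $\norm{T}$ is a finite tight Borel measure, its support $\spt T$ is separable; fix a countable dense subset $D\subset \spt T$. For $x_0\in D$, the coarea inequality applied to the $1$-Lipschitz function $d(x_0,\cdot)$, combined with $\Ha^n(X)<\infty$, implies that $R(x_0)=\{r>0:\Ha^{n-1}(\{d(x_0,\cdot)=r\})<\infty\}$ has full $\leb^1$-measure and is therefore dense in $(0,\infty)$. The countable family
\[
\mathcal{B}_0=\{B(x_0,r):x_0\in D,\ r\in R(x_0)\cap\Q\}
\]
is a basis for the topology of $\spt T$ whose members are open with $\Ha^{n-1}$-finite boundaries, and its finite-intersection closure $\mathcal{P}$ is a $\pi$-system whose members retain both properties (the boundary of a finite intersection being contained in the union of the individual boundaries); hence $\mathcal{P}\subset \mathcal{A}$ by hypothesis.

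Dynkin's lemma then yields $\mathcal{A}\supset \sigma(\mathcal{P})\supset \mathcal{B}(\spt T)$, and since $\norm{T}$ is concentrated on $\spt T$ the conclusion extends to every Borel $B\subset X$. Combined with $T\in \bN_k(X)$, the rectifiability criterion recalled above then gives $T\in \bI_k(X)$. The main anticipated obstacle is pinpointing the exact form of the Ambrosio--Kirchheim criterion from which to begin the reduction; the Dynkin argument itself is driven by the coarea inequality, which is what produces a $\pi$-system rich enough, consisting of open sets with $\Ha^{n-1}$-finite boundary.
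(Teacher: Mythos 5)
Your proposal is correct but takes a genuinely different route from the paper. The paper avoids Dynkin's lemma entirely and instead performs a two-step direct approximation: given an arbitrary open $U\subset X$, it considers the inner exhaustion $U_\delta=\{x\in U:\dist(x,X\setminus U)>\delta\}$, applies the coarea inequality to the $1$-Lipschitz function $\dist(\cdot,X\setminus U)$ to find $\delta_j\downarrow 0$ with $\Ha^{n-1}(\partial U_{\delta_j})<\infty$, deduces $\bM\bigl(f_\#(T\on U)-f_\#(T\on U_{\delta_j})\bigr)\to 0$, and concludes via mass-closure of $\biI_k(\R^k)$; then it passes from open to arbitrary Borel sets by outer regularity of $\norm T$ and the same mass-convergence trick; finally it invokes \cite[Theorem 8.8]{ambrosio-kirchheim-2000}, which is the same Ambrosio--Kirchheim criterion you begin from. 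Both arguments therefore rest on exactly the same two pillars — the coarea inequality to manufacture open sets with $\Ha^{n-1}$-finite boundary, and the fact that $\biI_k(\R^k)$ is closed under mass convergence — but you reach Borel sets via a $\pi$--$\lambda$ argument with a countable basis of balls, whereas the paper approximates each Borel set individually from outside (and each open set from inside). The paper's version is a bit tighter in that it sidesteps the support-concentration step: in your proposal, the inclusion $\sigma(\mathcal{P})\supset\mathcal{B}(\spt T)$ compares $\sigma$-algebras living on different ambient sets, so strictly one should argue that for any Borel $B\subset X$ there is $A\in\sigma(\mathcal{P})$ with $A\cap\spt T=B\cap\spt T$, and then $T\on B=T\on A$ because $\norm{T}(B\,\Delta\,A)=0$; this is a true and elementary point, but your write-up glosses over it, while the paper's outer-regularity step makes it unnecessary. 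Your observation that $X\in\mathcal A$ (so that the $\lambda$-system is nonempty and complements make sense) correctly notes $\partial X=\varnothing$; the rest of the $\lambda$-system checks are sound.
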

\begin{proof}
First, let $U\subset X$ be an arbitrary open set, and define $U_\delta=\{x\in U:\ \dist(x,X\setminus U)>\delta\}$. Then $\partial U_\delta\subset \{x: \dist(x,X\setminus U)=\delta\}$ and thus, by the area inequality, for almost every $\delta>0$ we have that $\Ha^{n-1}(\partial U_\delta)<\infty$. Choosing a decreasing sequence $\delta_j\downarrow 0$ with $\Ha^{n-1}(\partial U_{\delta_j})<\infty$, it follows from the dominated convergence theorem that $\|T\|(U\setminus U_{\delta_j})\to 0$ as $j\to\infty$. Thus $\bM(f_\#(T\on U)-f_\#(T\on U_{\delta_j}))\to 0$ as $j\to\infty$ for every $f\in \LIP(X,\R^k)$. Since $f_\#(T\on U_{\delta_j})\in \biI_k(\R^k)$ for all $j$ by assumption, it follows that $f_\#(T\on U)\in \biI_k(\R^k)$.

\medskip Similarly, given an arbitrary Borel set $B\subset X$ and $f\in \LIP(X,\R^k)$, we may find nested open sets $U_j$ containing $B$ so that $\|T\|(U_j\setminus B)\to 0$ as $j\to \infty$ using the Borel regularity of $\|T\|$ (see also \cite[Lemma 5.5]{ambrosio-kirchheim-2000}). Arguing as above we obtain that $f_\#(T\on B)\in \biI_k(\R^k)$. The claim now follows from \cite[Theorem 8.8]{ambrosio-kirchheim-2000}. 
\end{proof}

\begin{example}\label{ex:bilip}
The volume form of the closed, oriented, smooth $n$-manifold $M$ gives rise to the fundamental class $\bb M$ of $M$, which can be regarded as an integral cycle. This cycle is given by integrating Lipschitz differential forms \(f d\pi_1 \wedge \dotsm \wedge d\pi_n\) on \(M\). More precisely,
\[
\bb{M}(f,\pi) = \int_Mf\det(D\pi)\,d\hspace{-0.14em}\Ha^n
\]
for all \((f, \pi)\in \mathcal{D}^n(M)\). It follows from Stokes theorem that this defines an integral \(n\)-cycle, and satisfies $\bM(\bb M)=\Ha^n(M)$. Moreover, if $\varphi:M\to X$ is a bi-Lipschitz homeomorphism, then $\bb X:=\varphi_\#\bb M$ defines an integral cycle on $X$. Arguing as in \cite[Lemma 5.1]{basso2023geometric} we furthermore obtain that $\bM(\bb X)\le n^{n/2}\Ha^n(X)$.
\end{example}

\subsection{The homotopy formula}
Let $[0,1]\times X$ be equipped with the Euclidean product metric. Given a function $f\colon [0,1]\times X\to \R$, we define $f_t\colon X\to \R$ by $f_t(x):= f(t,x)$ for each $t\in[0,1]$. Furthermore, for $k\geq 0$ and $T\in\bI_k(X)$, let $\bb{t}\times T$ be the integral current in $\bI_k([0,1]\times X)$ given by
$$\bb{t}\times T(f,\pi_1,\dots, \pi_k):= T(f_t, \pi_{1t},\dots,\pi_{kt}).$$

Finally,  we define the multi-linear functional $\bb{0,1}\times T$ on $\mathcal{D}^{k+1}([0,1]\times X)$ by
\begin{align*}
    (f,\pi_1,&\dots, \pi_{k+1})\mapsto 
    \\
    &\sum_{i=1}^{k+1}\int_0^1(-1)^{k+1}T\Bigl(f_t\frac{\partial \pi_{it}}{\partial t},\pi_{1t},\dots, \pi_{(i-1)t}, \pi_{(i+1)t},\dots, \pi_{(k+1)t}\Bigl)\,dt.
\end{align*}
It has the following well-known properties; see e.g. \cite[Proposition~3.3]{basso2023geometric}.

\begin{proposition}\label{prop: homotopy-formula}
    Suppose that $T\in\bI_k(X)$. Then  $\bb{0,1}\times T\in \bI_{k+1}([0,1]\times X)$ and $$\partial(\bb{0,1}\times T) + \bb{0,1}\times\partial T = \bb{1}\times T - \bb{0}\times T.$$ For $k=0$ the second term on the left-hand side is zero.
\end{proposition}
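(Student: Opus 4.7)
The plan is to verify in three steps that the multilinear functional \(\bb{0,1}\times T\) (i) defines a metric current on \([0,1]\times X\), (ii) is integer rectifiable, and (iii) satisfies the boundary identity; the normality of \(\bb{0,1}\times T\) will then follow from the identity together with the fact that \(\partial T\in\bI_{k-1}(X)\) gives finite mass to \(\bb{0,1}\times \partial T\) via the same construction.

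For step (i), multilinearity is immediate from the definition. For continuity, I would observe that each \(\pi_i\colon [0,1]\times X\to\R\) is Lipschitz in the product metric, so the partial derivative \(\partial \pi_{it}/\partial t\) exists for a.e. \(t\in[0,1]\) and satisfies \(|\partial_t\pi_{it}|\le \LIP(\pi_i)\) uniformly. Hence if \(\pi_i^j\to \pi_i\) pointwise with uniformly bounded Lipschitz constants, the continuity of \(T\) and dominated convergence in \(t\) imply the required convergence. Locality is verified termwise: if some \(\pi_i\) is constant on the set where \(f\) is non-zero, then the combined locality of \(T\) (in its \(\pi\) slots) and the vanishing of \(\partial_t\pi_{it}\) force each integrand to vanish. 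Finite mass follows from the crude estimate
\[
\bigl|\bb{0,1}\times T(f,\pi_1,\ldots,\pi_{k+1})\bigr|\le (k+1)\prod_{i=1}^{k+1}\LIP(\pi_i)\int_X |f|_\infty\,\ud\|T\|,
\]
where \(|f|_\infty(x) = \sup_t|f(t,x)|\), so the measure \((k+1)\prod_i\LIP(\pi_i)\) times a pushforward of \(\|T\|\) works (after a standard refinement using the supremum being measurable).

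For steps (ii) and (iii), I would reduce to a Euclidean model case. Writing \(T=\sum_j \varphi_{j\#}\bb{\theta_j}\) with \(\varphi_j\colon K_j\subset\R^k\to X\) bi-Lipschitz and \(\theta_j\in L^1(\R^k,\Z)\), the naturality
\[
\bb{0,1}\times \varphi_{j\#}\bb{\theta_j} = (\id_{[0,1]}\times \varphi_j)_\#\bigl(\bb{0,1}\times \bb{\theta_j}\bigr)
\]
can be checked directly from the definitions. For the model case \(\bb{0,1}\times\bb{\theta}\) with \(\theta\in L^1(\R^k,\Z)\), one identifies this current with integration of the integer-valued function \(\mathbbm{1}_{[0,1]}(t)\,\theta(x)\) on \([0,1]\times \R^k\), using Fubini and the change-of-variables formula with the block-diagonal Jacobian of \((t,x)\mapsto (t,\pi_{1t}(x),\ldots,\pi_{kt}(x))\); this shows that \(\bb{0,1}\times\bb{\theta}\in\bI_{k+1}(\R^{k+1})\). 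Summing the pushforwards then gives integer rectifiability of \(\bb{0,1}\times T\). For the boundary identity in the model case, the formula reduces to Stokes' theorem applied to the product current \(\curr{[0,1]}\times \bb{\theta}\), giving the two end-slice terms \(\bb{1}\times\bb{\theta}-\bb{0}\times\bb{\theta}\) minus \(\bb{0,1}\times \partial\bb{\theta}\). The general identity then follows by pushing forward through \(\id\times\varphi_j\) (which commutes with \(\partial\)) and summing over \(j\).

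The main obstacle is managing the sign conventions and the pushforward-boundary interchange carefully enough to recognize the Euclidean identity as the homotopy formula stated; once that is in place, the integer rectifiability of \(\bb{0,1}\times T\) and its normality (via the boundary formula applied to both \(T\) and \(\partial T\)) fall out cleanly, giving \(\bb{0,1}\times T\in\bI_{k+1}([0,1]\times X)\).
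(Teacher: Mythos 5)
The paper does not actually prove this proposition; it cites it as ``well-known'' to \cite[Proposition~3.3]{basso2023geometric}, which in turn follows the Ambrosio--Kirchheim template. Your plan reproduces exactly that template (define the functional, reduce to the Euclidean model by the rectifiable decomposition, push forward), so the overall structure is right. However, two steps as written have genuine holes.

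First, the continuity verification in step (i) does not close. Pointwise convergence of $\pi_i^j\to\pi_i$ with uniformly bounded Lipschitz constants gives, for each fixed $x$, uniform convergence of $t\mapsto\pi_i^j(t,x)$ and hence only \emph{weak-$*$} convergence of $\partial_t\pi^j_{it}(x)$ in $L^\infty([0,1])$; it does \emph{not} give pointwise or $L^1(\|T\|)$ convergence of $\partial_t\pi^j_{it}$, which is what you would need to invoke ``continuity of $T$ plus dominated convergence in $t$.'' The difficulty is compounded by the fact that both the function slot $f_t\partial_t\pi^j_{it}$ and the remaining $\pi$-slots depend on $j$, so neither the continuity axiom nor the mass bound applies termwise. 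The standard fix is to avoid verifying the axioms directly: once you establish in step (ii) that $\bb{0,1}\times T=\sum_j(\id\times\varphi_j)_\#\bigl(\bb{0,1}\times\bb{\theta_j}\bigr)$ as functionals, with the mass series converging and the images pairwise disjoint, the right-hand side is manifestly an integer rectifiable current and therefore automatically satisfies the continuity and locality axioms; step (i) is then unnecessary. Second, the ``Stokes' theorem'' invocation in step (iii) is too strong for the model case: the functions $\theta_j\in L^1(\R^k,\Z)$ appearing in the decomposition of an integral current need not be of bounded variation, so $\partial\bb{\theta_j}$ is in general \emph{not} a finite-mass current, and the current-theoretic product rule $\partial(\curr{0,1}\times\bb{\theta_j})=(\curr{1}-\curr{0})\times\bb{\theta_j}-\curr{0,1}\times\partial\bb{\theta_j}$ cannot be applied to the individual pieces. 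What is true, and what you should verify instead, is that the boundary identity holds as an equality of \emph{multilinear functionals}, by a direct computation from the definition of $\bb{0,1}\times(\cdot)$ using Fubini and the fundamental theorem of calculus in $t$ --- no BV regularity of $\theta_j$ is needed, and only after summing over $j$ (and using $T\in\bI_k$) does the right-hand side acquire finite mass, which is precisely what upgrades $\bb{0,1}\times T$ from integer rectifiable to integral.
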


The following easy corollary will be used in Section \ref{sec: degree-bound}.

\begin{lemma}\label{lemma: currents-homotopy}
    Let $f,g\colon X \to Y$ be Lipschitz maps between two complete metric spaces with $\Ha^{k+1}(Y)=0$. Suppose that there exists a Lipschitz homotopy $H\colon [0,1]\times X\to Y$ between $f$ and $g$ satisfying $d(H(x,0),H(x,t)) \leq \epsilon$ for all $x \in X$ and each $t \in [0,1]$. Then, 
    $$(f_\#(T\on U))\on Y\setminus N_\epsilon(f(\partial U)) = (g_\#(T \on U))\on Y\setminus N_\epsilon(f(\partial U))$$
    for every $T \in \bI_k(X)$ with $\partial T=0$ and any $U \subset X$ open with $\Ha^k(\partial U)=0$.
\end{lemma}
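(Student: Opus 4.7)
The plan is to apply the homotopy formula (Proposition \ref{prop: homotopy-formula}) to a Lipschitz approximation of $T\on U$, push forward to $Y$ via $H$, and exploit $\Ha^{k+1}(Y) = 0$ to eliminate the top-dimensional integer rectifiable current that thereby appears on $Y$. The remaining correction is then controlled using the smallness of the test form near $f(\partial U)$.

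Concretely, I would approximate $\mathbbm{1}_U$ by a Lipschitz cutoff $\eta_m \colon X \to [0,1]$ with $\LIP(\eta_m) \leq m$ and $\eta_m$ locally constant outside a shrinking neighborhood of $\partial U$ of radius $O(1/m)$ (for instance $\eta_m(x) := \min(1, m\,d(x, X \setminus U))$). Set $T_m(f, \pi) := T(\eta_m f, \pi)$. The Leibniz rule for metric currents and $\partial T = 0$ give
\[
\partial T_m(f, \pi_1, \ldots, \pi_{k-1}) = -T(f, \eta_m, \pi_1, \ldots, \pi_{k-1}),
\]
which has finite mass bounded by $\LIP(\eta_m)\,\norm{T}$ and is supported, by locality, where $\eta_m$ is not locally constant; in particular $T_m \in \bI_k(X)$. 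Applying the homotopy formula to $T_m$ and pushing forward by $H$ yields
\[
\partial H_\#(\bb{0,1} \times T_m) + H_\#(\bb{0,1} \times \partial T_m) = g_\# T_m - f_\# T_m.
\]
Since $H_\#(\bb{0,1} \times T_m)$ is integer rectifiable of dimension $k+1$ in $Y$ and hence concentrated on a countably $\Ha^{k+1}$-rectifiable subset of $Y$, the assumption $\Ha^{k+1}(Y) = 0$ forces $H_\#(\bb{0,1} \times T_m) = 0$, leaving
\[
g_\# T_m - f_\# T_m = H_\#(\bb{0,1} \times \partial T_m).
\]

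To finish, I would test both sides against $(\pi_0, \pi_1, \ldots, \pi_k) \in \mathcal{D}^k(Y)$ with $\spt \pi_0 \subset \{y \in Y : d(y, f(\partial U)) > \epsilon\}$ and pass to the limit $m \to \infty$. The left-hand side converges to $(g_\#(T \on U) - f_\#(T \on U))(\pi_0, \pi)$ by dominated convergence. On the right-hand side, the image of the support of $\partial T_m$ under $H$ lies in an $(\epsilon + O(1/m))$-neighborhood of $f(\partial U)$, using the homotopy bound $d(H(t,x), f(x)) \leq \epsilon$ and Lipschitz continuity of $f$, so Lipschitz continuity of $\pi_0$ and its support condition force $|\pi_0 \circ H| = O(1/m)$ on that support. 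This cancels the $\LIP(\eta_m) = m$ in the mass bound, and the remainder is controlled by a constant times $\norm{T}(\overline{N_{1/m}(\partial U)})$. The latter tends to zero because $T$ is integer rectifiable and $\Ha^k(\partial U) = 0$ gives $\norm{T}(\partial U) = 0$, so the equality of currents on the open interior of $Y \setminus N_\epsilon(f(\partial U))$ follows.

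The main obstacle is the delicate matching of magnitudes in this last step: the factor $m = \LIP(\eta_m)$ in the mass bound is cancelled exactly by $|\pi_0 \circ H| = O(1/m)$, so convergence to zero hinges on the residual measure factor $\norm{T}(\overline{N_{1/m}(\partial U)}) \to 0$, which is precisely ensured by $\Ha^k(\partial U) = 0$. The hypothesis $\Ha^{k+1}(Y) = 0$ is likewise essential, providing the vanishing of the integer rectifiable $(k+1)$-current $H_\#(\bb{0,1} \times T_m)$ in $Y$.
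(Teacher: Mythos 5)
Your strategy (apply the homotopy formula to a cutoff of $T$ near $\partial U$, kill the $(k+1)$-dimensional piece in $Y$ using $\Ha^{k+1}(Y)=0$, and control the boundary contribution near $f(\partial U)$) is the same as the paper's, and the final estimate where $\LIP(\eta_m)=m$ cancels against $|\pi_0\circ H|=O(1/m)$ and $\norm{T}(N_{1/m}(\partial U)\cap U)\to 0$ is correct. However, there is a genuine gap in the cutoff step.

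The current $T_m := T\wedge\eta_m$ defined by $T_m(f,\pi)=T(\eta_m f,\pi)$ is \emph{not} integer rectifiable: writing $T=\sum_i\varphi_{i\#}\bb{\theta_i}$ with $\theta_i\in L^1(\R^k,\Z)$, one gets $T_m=\sum_i\varphi_{i\#}\bb{\theta_i\cdot(\eta_m\circ\varphi_i)}$, whose multiplicities lie in $\Z\cdot[0,1]$ rather than $\Z$. So the claim ``in particular $T_m\in\bI_k(X)$'' is false, Proposition~\ref{prop: homotopy-formula} (which is stated for integral currents and whose conclusion $\bb{0,1}\times T\in\bI_{k+1}$ you rely on) does not apply, and the assertion that $H_\#(\bb{0,1}\times T_m)$ is integer rectifiable is incorrect. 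The argument can be repaired by noting that $T_m$ is a \emph{rectifiable} normal current, that the cone/cylinder construction and homotopy formula extend to normal currents, and that a (real-)rectifiable $(k+1)$-current still has mass absolutely continuous with respect to $\Ha^{k+1}$ and thus vanishes when $\Ha^{k+1}(Y)=0$ --- but this requires tools beyond what the paper sets up and you do not supply them.

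The paper sidesteps this entirely by using a \emph{hard} cutoff: it restricts $T$ to $N_r(U)$ and invokes the slicing theorem \cite[Theorems~5.6 and~5.7]{ambrosio-kirchheim-2000} to find, for almost every $r>0$, a radius for which $T_r=T\on N_r(U)$ is integral with $\spt(\partial T_r)\subset\{d(\cdot,U)=r\}$. Restriction to a Borel set preserves integer rectifiability, so $Q_r=H_\#(\bb{0,1}\times T_r)$ is genuinely an integral $(k+1)$-current and vanishes, and the conclusion follows by letting $r\to0$. If you keep the soft cutoff, you must correct ``integer rectifiable'' to ``rectifiable'' throughout and justify the homotopy formula for normal/rectifiable currents; otherwise, replace the soft cutoff by the paper's restriction-plus-slicing argument.
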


\begin{proof}
    It follows from the slicing inequality \cite[Theorem 5.6 and 5.7]{ambrosio-kirchheim-2000} that for almost all $r>0$ we have $T_r = T\on N_r(U) \in \bI_k(X)$ and $\spt(\partial T_r) \subset \{x \in X \colon d(x,U)=r\}$. Let $Q_r = H_\#([0,1] \times T_r) \in \bI_{k+1}(Y)$. By Proposition \ref{prop: homotopy-formula} 
    $$\partial Q_r = f_\# T_r - g_\# T_r- H_\#([0,1]\times \partial T_r).$$
    Notice that $\spt(H_\#([0,1]\times \partial T_r)) \subset N_\epsilon(f(\spt(\partial T_r)))  \subset N_{\LIP(f) r +\epsilon}(f(\partial U))$. Since $\Ha^{k+1}(Y)=0$ we have $Q=0$. Therefore,
    $$(f_\# T_r) \on Y \setminus N_{\LIP(f) r +\epsilon}(f(\partial U)) = (g_\# T_r) \on Y \setminus  N_{\LIP(f) r +\epsilon}(f(\partial U)).$$
    Moreover, $f_\# T_r \rightharpoonup f_\# (T\on U)$ and $g_\# T_r \rightharpoonup g_\# (T\on U)$ because $\Ha^k(\partial U)=0$. We conclude that
    $$(f_\# (T \on U)) \on Y \setminus N_{\LIP(f) r +\epsilon}(f(\partial U)) = (g_\# (T \on U)) \on Y \setminus  N_{\LIP(f) r +\epsilon}(f(\partial U))$$
    for almost all $r>0$. The statement follows by letting $r \to 0$.
\end{proof}

\section{Bounds for the local index}\label{sec: degree-bound}
Throughout this section, $X$ will be a metric $n$-manifold with $\Ha^n(X)<\infty$ homeomorphic to a closed, oriented, smooth $n$-manifold $M$. The following theorem proves the implication (1)$\implies$(2) in Theorem \ref{thme: main}.

\begin{theorem}\label{thm: fund-current-implies-degree-bound}
   Suppose that $X$ has a metric fundamental class. Then there exists a constant $D>0$ with the following property. If $f\in \LIP(X,\R^n)$ and $E \subset X$ are such that $f|_E$ is bi-Lipschitz, then for almost every $x\in E$ 
    $$|\iota(f,x)| \leq D.$$
    Here, $D>0$ only depends on $n$ and the constant associated to the metric fundamental class of $X$.
\end{theorem}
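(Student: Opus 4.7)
The plan is to first establish a compatibility identity between the metric fundamental class and the classical local topological degree. Concretely, I would prove (this is the ``Lemma \ref{lemma: fund-current-pushforward-deg-local}'' referenced in the outline): for any Lipschitz map $f\colon X\to\R^n$ and any relatively compact domain $U\subset X$ with $\|T\|(\partial U)=0$,
\[
f_\#(T\on U) = \bb{\mu(f,U,\cdot)}\in\biI_n(\R^n),
\]
i.e.\ the pushforward is the integer-rectifiable current in $\R^n$ whose multiplicity at $y$ is the classical local degree. To prove this, I would first verify it when $f=\psi\circ\varphi$, where $\varphi\colon X\to M$ is a Lipschitz approximation of the identifying homeomorphism (of degree $\pm 1$) produced by Lemma \ref{lemma: lip-approx-easy} and $\psi$ is a bi-Lipschitz chart on $M$; there property (b) of the fundamental class gives $\varphi_\#T=\pm\bb M$ and one reads off the identity on each component of $\R^n\setminus f(\partial U)$. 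A general Lipschitz $f$ may then be homotoped to such a $\psi\circ\varphi$ on $U$ (after shrinking) using Lemma \ref{lemma: lip-approx-easy}, and the identity is transferred via Lemma \ref{lemma: currents-homotopy} combined with the additivity in Lemma \ref{lemma: basic-properties-degree}(2).

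To turn the resulting integral control of $|\mu(f,U,\cdot)|$ into a pointwise bound on $|\iota(f,x)|$ with a constant independent of $\LIP(f|_E)$, I would reduce to an almost-isometric situation by a Kirchheim-style decomposition. Namely, writing $E=\bigsqcup_k E_k$ as a countable disjoint union of Borel subsets on each of which there is a linear bijection $A_k\colon\R^n\to\R^n$ such that $A_k\circ f|_{E_k}$ is $(1+\epsilon)$-bi-Lipschitz. Since Lemma \ref{lemma: basic-properties-degree}(4) gives $|\iota(A_k\circ f,x)|=|\iota(f,x)|$, I may work on each $E_k$ with $f$ replaced by $A_k\circ f$ and bi-Lipschitz constant arbitrarily close to $1$; crucially, by Lemma \ref{lemma: integer-currents-pushfwrd-lip-constant} only the Lipschitz constant of $f$ on $E_k$ enters the mass bound, not the (potentially huge) global Lipschitz constant of $A_k\circ f$.

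The final step is a density argument. Assume for contradiction that $\Ha^n(E^{>D})>0$ with $E^{>D}=\{x\in E:|\iota(f,x)|>D\}$ for some $D$ exceeding the target constant. Passing to a piece $E_k^{>D}$ on which (after composing with $A_k$) $f$ is $(1+\epsilon)$-bi-Lipschitz, pick an $\Ha^n$-density point $x_0\in E_k^{>D}$, and choose $U=B(x_0,r)$ with $r>0$ small enough that $f|_{E_k^{>D}\cap U}$ is injective, $x_0$ is the unique preimage of $f(x_0)$ in $U$, and $\|T\|(\partial U)=0$ (valid for a.e.\ $r$ since $\|T\|$ is finite). Applying the pushforward identity to $T\on(E_k^{>D}\cap U)$ and using the rectifiable representation of $T$, points $y$ with a unique preimage in $E_k^{>D}\cap U$ contribute multiplicity of modulus at least $|\iota(f,x_y)|>D$, so that
\[
\bM\bigl(f_\#(T\on E_k^{>D}\cap U)\bigr) \;\ge\; D\cdot\leb^n\bigl(f(E_k^{>D}\cap U)\bigr) \;\ge\; D(1+\epsilon)^{-n}\Ha^n(E_k^{>D}\cap U),
\]
while Lemma \ref{lemma: integer-currents-pushfwrd-lip-constant} combined with (a) in Definition \ref{def: metric fundamental class} gives
\[
\bM\bigl(f_\#(T\on E_k^{>D}\cap U)\bigr) \;\le\; (1+\epsilon)^n\|T\|(E_k^{>D}\cap U) \;\le\; C(1+\epsilon)^n\Ha^n(E_k^{>D}\cap U).
\]
Dividing by $\Ha^n(E_k^{>D}\cap U)>0$ and sending $\epsilon\to 0$ forces $D\le C$ (up to a dimensional factor from Example \ref{ex:bilip}), contradicting $D>C$.

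The main obstacle is the pushforward-degree identification: one must extract a topological invariant (the local degree) from a purely measure-theoretic object ($T$) through an arbitrary Lipschitz map, and track signs coming from the orientation of $T$. A secondary subtlety is promoting a bound a.e.\ with respect to $\|T\|$ to one a.e.\ with respect to $\Ha^n$ on $E$: one needs to argue that whenever $f|_E$ is bi-Lipschitz and $\Ha^n(E)>0$, the multiplicity representative of $T$ charges $E$ in a set of full $\Ha^n$-measure in $E$, which I expect to follow by combining the degree-$\pm1$ Lipschitz approximation of the identification $X\to M$ with property (b) and a blow-up at rectifiable density points of $E$.
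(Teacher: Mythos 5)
Your overall strategy matches the paper's: first prove the pushforward--degree identity (the paper's Lemma \ref{lemma: fund-current-pushforward-deg-local}), then reduce via a Kirchheim-type decomposition to a piece where $f$ is almost-isometric, and finish with a density argument combining Lemma \ref{lemma: integer-currents-pushfwrd-lip-constant} with Definition \ref{def: metric fundamental class}(a). The reduction step is essentially identical (you use a linear bijection $A_k$; the paper composes with a linear isometry obtained via John's ellipse theorem, yielding a dimensional constant $C_n$), and both exploit that Lemma \ref{lemma: integer-currents-pushfwrd-lip-constant} only sees the Lipschitz constant of $f$ restricted to $E_k$, not the global one.

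However, your final step contains a genuine gap. You ``apply the pushforward identity to $T\on(E_k^{>D}\cap U)$'' and claim that points $y$ with a unique preimage in $E_k^{>D}\cap U$ contribute multiplicity $\ge|\iota(f,x_y)|$. This conflates two different multiplicities. The identity $f_\#(T\on D)=\bb{\mu(f,D,\cdot)}$ is proved only for restrictions to \emph{domains} $D$ (with $\Ha^n(\partial D)=0$); for a restriction to an arbitrary Borel set $B\subset U$, the pushforward $f_\#(T\on B)$ is a rectifiable current whose multiplicity at $f(x)$ is the integer density $\theta(x)$ of the rectifiable representation of $T$, \emph{not} the topological local index $\iota(f,x)$ (which depends on the behavior of $f$ on a full neighborhood of $x$ in $X$, including points outside $E_k^{>D}\cap U$). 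These quantities need not coincide, so your lower bound $\bM\bigl(f_\#(T\on E_k^{>D}\cap U)\bigr)\ge D\cdot\leb^n(f(E_k^{>D}\cap U))$ does not follow as stated. The fix is the one the paper uses: apply the identity to $T\on D$ for a \emph{domain} $D$ (a small ball component around a density point $x_0$ chosen so that $x_0$ is the sole preimage of $f(x_0)$ in $\overline D$); then $\mu(f,D,y)=\iota(f,x_0)$ for all $y$ in the component of $\R^n\setminus f(\partial D)$ containing $f(x_0)$, and one recovers $|\iota(f,x_0)|$ by Lebesgue differentiation of $z\mapsto|\mu(f,D,z)|$ along $f(E_k)$, with the upper bound supplied by Lemma \ref{lemma: integer-currents-pushfwrd-lip-constant} applied to $T\on(D\cap E_k)$ and the bound $\|T\|\le C\Ha^n$.
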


Let $h\colon M \to \R^n$ be Lipschitz and $x \in M$ be such that $h$ is differentiable at $x$ with non-degenerate differential. Using (3) and (4) in Lemma \ref{lemma: basic-properties-degree} one can show that if the local index is well-defined, then $\iota(h,x) = \sgn(\det(D_xh))$, cf. \cite[Theorem 5.22]{Koskela-lecture-notes}. Therefore, for closed, oriented, smooth manifolds the previous theorem holds with $D=1$ and follows by more elementary arguments that do not involve the fundamental class. Furthermore, if $D \subset M$ is a domain with $\Ha^n(\partial D)=0$, then 
$$\mu(h,D,y) = \sum_{x \in h^{-1}(y) \cap D} \iota(h,x)\quad \textrm{for almost every }y\in \R^n.$$
This and the previous observation together with the area formula yield
$$h_\#(\bb{M}\on D) = \bb{\mu(h,D,\cdot)}.$$
Next we prove this identity for metric manifolds that have a metric fundamental class.

\begin{lemma}\label{lemma: fund-current-pushforward-deg-local}
    Suppose that $X$ has a metric fundamental class $T \in \bI_n(X)$. Then, 
    $$f_\# (T\on D) = \bb{\mu(f,D, \cdot)}$$
    for every $f\in \LIP(X,\R^n)$ and every domain $D \subset X$ with $\Ha^n(\partial D) = 0$.
\end{lemma}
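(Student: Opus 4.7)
The plan is to combine the Ambrosio--Kirchheim structure theorem for normal $n$-currents in $\R^n$ with property (b) of the metric fundamental class and the homotopy Lemma \ref{lemma: currents-homotopy}.

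\emph{Step 1 (reduction).} Property (a) and $\Ha^n(\partial D)=0$ give $\|T\|(\partial D)=0$; together with $\partial T=0$, this yields $T\on D\in\bI_n(X)$ with $\partial(T\on D)$ supported in $\partial D$, so $f_\#(T\on D)\in \bI_n(\R^n)$ has boundary in $f(\partial D)$. By the Ambrosio--Kirchheim characterization of integer rectifiable normal $n$-currents in $\R^n$, $f_\#(T\on D)=\bb\theta$ for some $\theta\in L^1(\R^n;\Z)\cap BV(\R^n)$, constant on each connected component of $\R^n\setminus f(\partial D)$. Both $\theta$ and $\mu(f,D,\cdot)$ vanish on the unbounded component (by compact support of $\bb\theta$ and Lemma \ref{lemma: basic-properties-degree}(1), respectively), so it suffices to show $\theta(y_0)=\mu(f,D,y_0)$ for $y_0$ in any bounded component $V$ of $\R^n\setminus f(\partial D)$.

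\emph{Step 2 (identification on bounded components).} Fix $y_0\in V$ and choose $r>0$ with $B(y_0,r)\subset V$. Construct an orientation-preserving Lipschitz map $G\colon \R^n\to M$ that acts as a bi-Lipschitz chart on $B(y_0,r)$ onto an open set $W:=G(B(y_0,r))\subset M$, arranged so that $G^{-1}(W)=B(y_0,r)$; such $G$ exists since $M$ is an absolute Lipschitz neighborhood retract. Using that $G|_{B(y_0,r)}$ is a bi-Lipschitz chart and $\theta$ is constant on $V$, one computes
\[
(G\circ f)_\#(T\on D)\on W = G_\#\bigl(\bb\theta\on B(y_0,r)\bigr) = \theta(y_0)\cdot\bb M\on W.
\]
To identify the same current topologically, approximate the homeomorphism $\varrho\colon X\to M$ by a Lipschitz map $\varphi\colon X\to M$ with $\deg(\varphi)=1$, so that $\varphi_\# T=\bb M$ by property (b), and approximate $f\circ\varrho^{-1}\colon M\to\R^n$ by a Lipschitz $h\colon M\to\R^n$. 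Then $h\circ\varphi$ is uniformly close to $f$, so Lemma \ref{lemma: currents-homotopy} yields that $f_\#(T\on D)$ and $(h\circ\varphi)_\#(T\on D)$ agree on $B(y_0,r)$ (which stays away from $f(\partial D)$). The smooth-case identity $h_\#(\bb M\on\varrho(D))=\bb{\mu(h,\varrho(D),\cdot)}$ on $M$, combined with the topological invariance $\mu(h,\varrho(D),y_0)=\mu(h\circ\varrho,D,y_0)=\mu(f,D,y_0)$ (via Lemma \ref{lemma: basic-properties-degree}(3)), identifies the current on $W$ as $\mu(f,D,y_0)\cdot\bb M\on W$. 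Comparing the two expressions yields $\theta(y_0)=\mu(f,D,y_0)$.

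\emph{Main obstacle.} The delicate point is the topological identification: since $\varrho$ is only continuous and not Lipschitz, one cannot directly push $T\on D$ forward via $\varrho$, and it is not immediate that $\varphi_\#(T\on D)$ converges to $\bb M\on\varrho(D)$ as $\varphi\to\varrho$. This is handled by localizing to the chart $W$ via $G$, where the bi-Lipschitz structure reduces the identification to a density comparison controlled by the multiplicity-$1$ structure of $T$ enforced by property (b). The rest of the argument then follows from standard manipulations of pushforwards, the homotopy invariance of the local degree, and the smooth-case area formula on $M$.
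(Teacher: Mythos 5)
Your proposal shares the paper's overall spirit (approximate $\varrho$ by a Lipschitz $\varphi$, exploit property~(b) via $\varphi_\#T=\bb M$, and use Lemma~\ref{lemma: currents-homotopy} to compare $f$ with an approximant that factors through $M$), but the route through the Ambrosio--Kirchheim structure theorem and a chart localization $G$ contains two genuine gaps.

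\textbf{Step 1 is not justified.} You claim that $\|T\|(\partial D)=0$ and $\partial T=0$ together yield $T\on D\in\bI_n(X)$. This is false in general: restricting an integral cycle to an open set need not produce a \emph{normal} current, since $\partial(T\on D)$ can have infinite mass even when $\Ha^n(\partial D)=0$ (think of a fractal boundary with zero $\Ha^n$-measure but infinite $\Ha^{n-1}$-measure). The paper sidesteps this precisely by working with $T_r=T\on N_r(D)$, which is integral for a.e.\ $r$ by the slicing theorem, inside Lemma~\ref{lemma: currents-homotopy}. Without normality, you cannot invoke the structure theorem for normal $n$-currents in $\R^n$, so the representation $f_\#(T\on D)=\bb\theta$ with $\theta\in BV(\R^n;\Z)$ locally constant off $f(\partial D)$ is not available at the outset. (It is the \emph{conclusion} of the lemma, since $\mu(f,D,\cdot)$ is locally constant off $f(\partial D)$, but it cannot be assumed.)

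\textbf{Step 2 leaves the crucial identification open.} You compare $(h\circ\varphi)_\#(T\on D)=h_\#\bigl(\varphi_\#(T\on D)\bigr)$ with $h_\#\bigl(\bb M\on\varrho(D)\bigr)$, which requires knowing that $\varphi_\#(T\on D)$ and $\bb M\on\varrho(D)$ agree near the relevant part of $M$. You flag this yourself as the ``main obstacle,'' but the resolution offered --- ``localizing to the chart $W$ via $G$, where the bi-Lipschitz structure reduces the identification to a density comparison controlled by the multiplicity-$1$ structure of $T$'' --- is not an argument. The paper resolves this concretely: since $\varphi_\#T=\bb M$, one writes $\varphi_\#(T\on D)=\bb M-\varphi_\#(T\on(X\setminus D))$, and because $\varphi$ is uniformly close to $\varrho$, the second term vanishes on $V_\delta=\{z\in M: d(z,V^c)>\delta\}$ with $V=\varrho(D)$. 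That support argument, together with the local-index chain rule $\mu(g,D,y)=\mu(f\circ\eta,V,y)$ using $\mu(\varphi,\cdot,D)\equiv 1$ on $V_\delta$, is exactly what your sketch is missing. As written, the proposal does not close to a proof.
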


Note that, whenever the identity in Lemma \ref{lemma: fund-current-pushforward-deg-local} holds, we can extend it to \emph{arbitrary} open sets $U \subset M$ with $\Ha^n(\partial U)=0$ by defining $\mu(h,U,y)=\sum_i\mu(h,D_i,y)$ where $\{D_i\}$ is the countable collection of connected components of $U$. 
   
\begin{proof}
    Fix a homeomorphism $\varrho:X\to M$ of degree one and set $V=\varrho(D)$. Embed $X \subset l^\infty$. Extend $f$ to a Lipschitz map on $l^\infty$ and denote by $L$ the Lipschitz constant of the extension of $f$. Let $\epsilon>0$. Since $\varrho$ is a homeomorphism there exists $\delta>0$ such that $\varrho^{-1}(N_\delta(\partial V)) \subset N_{\epsilon/2L}(\partial D)$. By Lemma \ref{lemma: lip-approx-easy} there exist Lipschitz approximations $\varphi\colon X \to M$ and $\eta\colon M \to l^\infty$ of $\varrho$ and $\varrho^{-1}$, respectively, such that $d(\varphi,\varrho)\leq \delta$, and $\eta(y) \in N_{\epsilon/L}(\partial D)$ for every $y \in N_\delta(\partial V)$ and $g = f\circ\eta\circ\varphi$ satisfies $d(f,g) \leq \epsilon$. Therefore, $(f\circ \eta)(N_\delta (\partial V)) \subset N_\epsilon (f(\partial D))$ and the straight line homotopy $H$ between $f$ and $g$ satisfies $d(f(x),H(x,t))\leq \epsilon$ for every $x \in  D$ and each $t \in [0,1] $. It follows from Lemma \ref{lemma: currents-homotopy} that
    \begin{equation}\label{eq: fund-current-deg-local-homotopy-equal-phfwd}
        (f_\# (T\on D)) \on \R^n \setminus N_\epsilon(f(\partial D))=(g_\# (T\on D)) \on \R^n \setminus N_\epsilon(f(\partial D)).
    \end{equation}
    Moreover, by Lemma \ref{lemma: basic-properties-degree} we have
    \begin{equation}\label{eq: fund-current-deg-local-homotopy-equal-degree-1}
        \mu(f,D,y) = \mu(g,D,y)
    \end{equation} 
    for all $y \in \R^n \setminus N_\epsilon(f(\partial D))$. Let $V_\delta = \{ z \in M \colon d(z,V^c) >\delta\}$. We may assume that $\varphi$ is homotopic to $\varrho$ and thus $\deg(\varphi)=1$. Since $\varphi^{-1}(V_\delta) \subset D$, Lemma \ref{lemma: basic-properties-degree} implies $\mu(\varphi,z,D)=\deg(\varphi) = 1$ for all $z \in V_\delta$. We claim that for every $y \in \R^n \setminus N_\epsilon(f(\partial D))$ with $\#g^{-1}(y)<\infty$ we have 
    \begin{equation}\label{eq: fund-current-deg-local-homotopy-equal-degree-2}
        \mu(g,y,D) =  \mu(f\circ \eta,y,V).
    \end{equation}
        Indeed, for $y$ as above, write $(f\circ\eta)^{-1}(y)  \cap V =\{z_1,\dots,z_m\}$ and $\varphi^{-1}(z_i) = \{x_{1,i},\dots,x_{k_i,i}\}$. Then $\{z_1,\dots,z_m\} \subset V_\delta$ and $g^{-1}(y) \cap D =  \{x_{1,i},\dots,x_{k_i,i}\}$. Therefore,
    \begin{align*}
        \mu(f\circ \eta,V,y) &= \sum_{i=1}^m  \iota(f\circ\eta,z_i)
        \\
        &= \sum_{i=1}^m   \mu(\varphi,D,z_i) \cdot \iota(f\circ\eta,z_i)
        \\
        &= \sum_{i=1}^m  \sum_{j=1}^{k_i} \iota(\varphi,x_{j,i}) \cdot \iota(f\circ\eta,z_i)
        \\
        &= \sum_{i=1}^m  \sum_{j=1}^{k_i} \iota(g,x_{j,i})
        \\
        &= \mu(g,D,y).
    \end{align*}
     Combining \eqref{eq: fund-current-deg-local-homotopy-equal-degree-1} and \eqref{eq: fund-current-deg-local-homotopy-equal-degree-2} yields
    \begin{equation}\label{eq: fund-current-deg-local-homotopy-equal-degree-3}
        \mu(f,D,y) = \mu(g,D,y) = \mu(f\circ\eta,V,z)
    \end{equation}
    for almost all $y \in \R^n \setminus N_\epsilon(f(\partial D))$. Since $ \varphi_\# T = \bb{M}$, we have 
    $$\varphi_\# (T\on D) = \bb{M} - \varphi_\# (T\on (X\setminus D)).$$ 
    In particular, 
    $$(\varphi_\# (T\on D))\on V_\delta = \bb{M}\on V_\delta.$$
    
    Recall that $(f\circ \eta)(V\setminus V_\delta) \subset N_\epsilon (f(\partial D))$. Therefore,
    $$((f\circ \eta)_\# (\bb{M}\on V_\delta)) \on \R^n \setminus N_\epsilon (f(\partial D)) = \bb{\mu(f\circ\eta,V,\cdot)}\on \R^n \setminus N_\epsilon(f(\partial D)).$$
    Define $S_\epsilon = ((f\circ \eta)_\# ((\varphi_\#(T\on D)))\on V_\delta) \on \R^n \setminus N_\epsilon(f(\partial D))$. By the above and \eqref{eq: fund-current-deg-local-homotopy-equal-degree-3} we have
    $$ S_\epsilon = ((f\circ \eta)_\# ( \bb{M}\on V_\delta)) \on \R^n \setminus N_\epsilon(f(\partial D))=\bb{\mu(f,D,\cdot)}\on  \R^n \setminus N_\epsilon(f(\partial D)).$$
    Notice that
    $$(g_\#(T\on D))\on \R^n \setminus N_\epsilon(f(\partial D)) = S_\epsilon + ((f\circ \eta)_\#((\varphi_\#(T\on D))\on (M\setminus V_\delta))\on \R^n \setminus N_\epsilon(f(\partial D)).$$
    Moreover, we have that $(\varphi_\#(T\on D))\on M\setminus V_\delta$ is supported in $N_\delta(\partial V)$ and, since $(f\circ\eta)(N_\delta(\partial V))\subset N_\epsilon(f(\partial D))$, we conclude that the second term above vanishes and we have $(g_\#(T\on D))\on \R^n \setminus N_\epsilon(f(\partial D)) = S_\epsilon$.
    By \eqref{eq: fund-current-deg-local-homotopy-equal-phfwd} this shows that
    $$(f_\#(T\on D) )\on \R^n \setminus N_\epsilon(f(\partial D)) = \bb{\mu(f,D,\cdot)}\on  \R^n \setminus N_\epsilon(f(\partial D)).$$
    Since $\epsilon>0$ was arbitrary and $\Ha^n(\partial D)=0$ this proves the claim.
\end{proof}

\begin{remark}\label{rmk:multipl}
Together with the remark after the statement of Lemma \ref{lemma: fund-current-pushforward-deg-local}, the proof in fact also yields the following technical variant of Lemma \ref{lemma: fund-current-pushforward-deg-local}. 

\medskip If $S\in \bI_n(X)$ with $\partial S=0$ and $\|S\|\le C \cdot \Ha^n$ satisfies
\begin{align*}
\varphi_\#S=d\cdot \deg(\varphi)\cdot \bb M,\quad \varphi\in \LIP(X,\R^n)
\end{align*}
for some $d\in\Z\setminus\{0\}$, then $f_\#(S\on U)=d \cdot \bb{\mu(f,U,\cdot)}$ for all $f\in \LIP(X,\R^n)$ and open sets $U\subset X$ with $\Ha^n(\partial U)=0$. In light of Lemma \ref{lem:rect-criterion}, it follows from this that $d\inv \cdot S\in \bI_n(X)$. In particular $d\inv \cdot S$ is a metric fundamental class for $X$.
\end{remark}

We can now  provide the proof of Theorem \ref{thm: fund-current-implies-degree-bound}.

\begin{proof}[Proof of Theorem~\ref{thm: fund-current-implies-degree-bound}]
Since $f|_E$ is bi-Lipschitz, we have that the metric differential $\md_{f(x)}(f\inv)$ is a norm on $\R^n$ for $\Ha^n$-almost every $x\in E$. We refer to \cite{kirchheim1994rectifiable} for the theory of metric differentials. By \cite[Lemma 4]{kirchheim1994rectifiable} there exist Borel sets $E_i\subset X$ with $E=N\cup\bigcup_iE_i$, $\Ha^n(N)=0$, and norms $\|\cdot\|_i$ on $\R^n$ such that
\[
\frac 12\|x-y\|_i\le d(f\inv(x),f\inv(y))\le 2\|x-y\|_i,\quad x,y\in f(E_i)
\]
By John's ellipse theorem (see \cite[Theorem 2.18]{alvarez-thompson04}) there are inner product norms $|\cdot|_i$ on $\R^n$ so that $\frac{1}{C_n} \|\cdot\|_i\le |\cdot|_i\le C_n\|\cdot\|_i$ for all $i$, where $C_n$ is a constant depending only on $n$. If $p_i:(\R^n,|\cdot|_i)\to \R^n$ is a linear isometry, then 
\[
\frac 1{C_n} \|x-y\|\le d((p_i\circ f)\inv(x),(p_i\circ f)\inv(y))\le C_n\|x-y\|,\quad x,y\in f(E_i),
\]
and moreover $\iota(f,x)=\iota(p_i\circ f,x)$ whenever $\iota(f,x)$ is defined. Thus, replacing $E$ by $E_i$ and $f$ by $p_i\circ f$, we may suppose that $f$ is $C_n$-bi-Lipschitz on $E$, where $C_n$ is a constant only depending on $n$.

By the coarea formula the preimage $f^{-1}(x)$ is finite for almost all $x\in f(E)$. It follows that for almost all $x \in E$ the set $\{y \in X \colon f(y)=f(x)\}$ is finite. Since $T$ has a metric fundamental class there exists an integral cycle $T\in \bI_n(X)$ with $\varphi_\# T = \deg(\varphi) \cdot \bb{M}$ for every Lipschitz map $\varphi\colon X \to M$ and a constant $C>0$ such that $\norm{T} \leq C \Ha^n$. 
Let $x\in E$ and $r>0$ be such that $f^{-1}(f(x))\cap \overline{B}(x,r) = x$ and $\Ha^n(\partial B(x,r))= 0 $. Then $\mu(f,D,f(x)) = \iota(f,x)$, where $D$ is the connected component of $B(x,r)$ containing $x$. Since $\partial D\subset \partial B(x,r)$ we have $\Ha^n(\partial D) =0 $ and thus Lemma \ref{lemma: fund-current-pushforward-deg-local} and \ref{lemma: integer-currents-pushfwrd-lip-constant} yield
    $$\bb{\mu(f,D,\cdot)}(B\cap f(E))=\norm{f_\#(T\on D)}(B\cap f(E)) \leq C_n^n \norm{T}(f^{-1}(B) \cap E \cap D)$$
    for any Borel set $B\subset \R^n$. Furthermore, for almost every $y\in f(E)\setminus f(\partial D)$ we have that 
    $$|\mu(f,D,y))|=\lim_{\varepsilon\to 0}\frac{1}{\omega_n\varepsilon^n}\int_{B(y,\varepsilon)\cap f(E)}|\mu(f,D,z)|\ud\leb^n(z)$$
    by the Lebesgue differentiation theorem and the fact that almost every point of $f(E)$ is a density point. Here, \(\omega_n\) equals the Lebesgue measure of the Euclidean unit \(n\)-ball. Using that $f^{-1}:f(E)\to E$ is $C_n$-bi-Lipschitz, the estimate above gives us
    \begin{align*}
    &\lim_{\varepsilon\to 0}\varepsilon^{-n}\int_{B(y,\varepsilon)\cap f(E)}|\mu(f,D,z)|\ud\leb^n(z) \\
    \le &\lim_{\varepsilon\to 0}\varepsilon^{-n}C_n^n\norm{T}(f^{-1}(B(y,\varepsilon)\cap f(E))\cap D)\\
    \le &\lim_{\varepsilon\to 0}\varepsilon^{-n}C_n^n\|T\|(B(f^{-1}(y),C_n\varepsilon)\cap E)\\
    \le & C\cdot C_n^{2n}
    \end{align*}
    for almost every $y\in f(E)\setminus f(\partial D)$. By Lemma \ref{lemma: basic-properties-degree} the local degree $y\mapsto \mu(f,D,y)$ is constant on each component of $\R^n \setminus f(\partial D)$. Since $z:=f(x)\notin f(\partial D)$, we have that it is an interior point of $\R^n \setminus f(\partial D)$. Thus there exists $y$ as above in the same connected component of $\R^n \setminus f(\partial D)$ as $z$, giving the estimate 
    $$|\iota(f,x)| = |\mu(f,D,z) | = |\mu(f,D,y)| \leq C\cdot C_n^{2n}.$$
\end{proof}

We conclude the section with the following proposition which is a strengthening of \cite[Proposition 5.5]{basso2023geometric}. In Riemannian geometry, this statement is usually referred to as the constancy theorem.

\begin{proposition}\label{prop: constancy}
    Suppose that $X$ has a metric fundamental class $T \in \bI_n(X)$. Let $S \in \bI_n(X)$ and $U \subset X$ be a domain such that $\spt( \partial S )\cap \overline{U} = \emptyset$. Then there exists $k \in \Z$ such that $S \on U = k \cdot T\on U$.
\end{proposition}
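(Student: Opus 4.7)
The plan is to prove the identity $S \on U = k \cdot T \on U$ first locally on bi-Lipschitz pieces of $U$, via Lemma \ref{lemma: fund-current-pushforward-deg-local} together with the classical top-dimensional constancy theorem in $\R^n$, and then propagate the integer $k$ globally using connectedness of $U$.

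For the local step, I would fix a density point $x_0 \in U$ of $\norm{T}$ and use integer rectifiability together with Kirchheim's theorem to place $x_0$ inside a bi-Lipschitz chart. I would then choose a connected open set $W \subset U$ with $x_0 \in W$, $\overline W \subset U$, $\Ha^n(\partial W) = 0$, and $\Ha^{n-1}(\partial W) < \infty$ (obtained by slicing a Lipschitz distance function via the coarea inequality), together with an extension $f \in \LIP(X, \R^n)$ whose restriction $f|_W$ is bi-Lipschitz onto its image. Lemma \ref{lemma: fund-current-pushforward-deg-local} then yields $f_\#(T \on W) = \bb{\mu(f, W, \cdot)}$, and the local-index identity $\iota(f, x) = \pm 1$ at metric differentiability points with non-degenerate differential (Lemma \ref{lemma: basic-properties-degree}(4)) reduces this, after shrinking $W$ so that $f(W)$ lies in a single component of $\R^n \setminus f(\partial W)$, to $\pm \bb{\mathbf{1}_{f(W)}}$. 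On the other hand, since $\spt(\partial S) \cap \overline{W} = \emptyset$, slicing gives $\spt(\partial(S \on W)) \subset \partial W$, so $f_\#(S \on W)$ is a top-dimensional integer rectifiable current in $\R^n$ with boundary supported in $f(\partial W)$. The classical constancy theorem in $\R^n$ then forces $f_\#(S \on W) = k_W \cdot \bb{\mathbf{1}_{f(W)}}$ for some $k_W \in \Z$, and applying the bi-Lipschitz inverse of $f|_W$ (which is invertible on currents supported in $W$) yields $S \on W = \pm k_W \cdot T \on W$.

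For the global step, Lemma \ref{lemma: integer-currents-pushfwrd-lip-constant} combined with $f_\#(T \on W) = \pm \bb{\mathbf{1}_{f(W)}}$ gives $\norm{T}(W) \geq \Ha^n(f(W)) / \LIP(f)^n > 0$, so that $k_W$ is unambiguously determined. For two such charts $W, W'$ with $\Ha^n(W \cap W') > 0$, restricting the local argument to a sub-chart inside $W \cap W'$ forces $k_W = k_{W'}$. Since $U$ is open and connected and the bi-Lipschitz charts cover $U$ up to an $\Ha^n$-null set (which carries no mass of $S \on U$ or $T \on U$ by integer rectifiability), a chain-of-overlapping-charts argument then yields a single integer $k$ with $S \on U = k \cdot T \on U$ globally. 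The main subtlety is precisely this global propagation: without a uniform lower bound $\norm{T} \gtrsim \Ha^n$ (as would be supplied by the LLC assumption of \cite{basso2023geometric}), the bi-Lipschitz chart cover only meets $U$ up to a null set, and $k$ must be transferred by exploiting the chart-by-chart non-triviality of $T$ together with the connectedness of $U$.
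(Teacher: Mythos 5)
Your proposal takes a fundamentally different route from the paper, but it contains genuine gaps in the local step that I don't see how to close.

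The central difficulty is the existence of an open connected set $W \ni x_0$ and a Lipschitz extension $f \in \LIP(X,\R^n)$ such that $f|_W$ is bi-Lipschitz. Kirchheim's theorem (as invoked in Section 2.2) gives bi-Lipschitz charts $\varphi_i: K_i \subset \R^n \to X$ defined only on \emph{Borel} sets $K_i$, and a Lipschitz extension of $\varphi_i^{-1}$ from $\varphi_i(K_i)$ to all of $X$ need not be injective -- let alone bi-Lipschitz -- on any open neighborhood of a point of $\varphi_i(K_i)$. Without injectivity of $f$ on the open set $W$, invariance of domain does not apply, and $\mu(f,W,\cdot)$, while locally constant and integer-valued near $f(x_0)$, need not be $\pm 1$: it is exactly the content of Theorem~\ref{thm: fund-current-implies-degree-bound} that this local index is only \emph{bounded} in terms of the fundamental-class constant, not equal to $\pm 1$. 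If $\mu(f,W,\cdot)\equiv j$ on the relevant component with $|j|>1$, then $f_\#(T\on W)=j\,\bb{\mathbf{1}}$ and the classical constancy theorem gives $f_\#(S\on W)=m\,\bb{\mathbf{1}}$, but you cannot push back by an inverse of $f|_W$ (which no longer exists as a bi-Lipschitz map on a full open set), and there is no reason $j\mid m$. The local identity $S\on W = k_W\, T\on W$ with $k_W\in\Z$ therefore does not follow.

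The global step also has a gap that you flag but do not resolve. To chain two charts you need $\|T\|(W\cap W')>0$, not merely $\Ha^n(W\cap W')>0$; without the lower bound $\|T\|\gtrsim \Ha^n$ supplied by LLC (which the proposition deliberately avoids assuming), positive-$\Ha^n$-measure overlaps can be $\|T\|$-null and the chain breaks. The paper's proof sidesteps both problems entirely: it pushes forward to the closed smooth manifold $M$ via a Lipschitz approximation $\varphi$ of a fixed degree-one homeomorphism $\varrho$, obtains the constant $k$ from the classical constancy theorem on $M$, and then directly proves $Q:=S\on U - k\,T\on U =0$ by constructing fillings of $\partial Q$ (and then of $Q$) supported in arbitrarily small neighborhoods of $\spt\partial Q$ (respectively of $\partial U$), invoking the filling-volume rigidity result \cite[Lemma 5.6]{basso2023geometric}. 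No local charts, no $\iota=\pm 1$ identity, and no lower mass bound are needed.
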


\begin{proof}
    Fix a homeomorphism $\varrho\colon X \to M$ of degree one and put $V=\varrho (U)$. We begin with the following observation. Let $\epsilon>0$ be such that if $\varphi \colon X \to M$ is Lipschitz with $d(\varrho,\varphi)\leq \epsilon$, then $\varphi(\spt(\partial S)) \cap V= \emptyset$ and $\varphi$ is homotopic to $\varrho$. Since $\spt(\varphi_\#(\partial S)) \subset \varphi(\spt(\partial S))$, it follows from the constancy theorem in $M$ that there exists an integer $k \in \Z$ such that $$(\varphi_\# S)\on V = k \cdot \bb{M}\on V.$$  
    Furthermore, suppose that $\epsilon>0$ is sufficiently small such that the following holds. If $\psi\colon X \to M$ is another Lipschitz map satisfying $d(\varrho,\psi)\leq \epsilon$, then there exists a Lipschitz homotopy $H\colon X \times [0,1] \to M$ between $\varphi$ and $\psi$ with $H(x,t) \notin V$ for all $x \in \spt(\partial S)$ and $t \in [0,1]$. Using Proposition \ref{prop: homotopy-formula} we conclude that $k$ is independent of $\varphi$, that is,
    \begin{equation}\label{eq: constancy-thme-independence}
        (\psi_\# S) \on V = k \cdot \bb{M} \on V = (\varphi_\# S) \on V
    \end{equation}
    for any Lipschitz map $\psi\colon X \to M$ with  $d(\varrho,\psi)\leq \epsilon$. 
    
    Embed $X \subset l^\infty$. By Lemma~\ref{lemma: lip-approx-easy} there exist Lipschitz approximations $\varphi\colon X\to M$ and $\eta\colon M\to l^\infty$ of $\varrho$ and $\varrho^{-1}$, respectively, such that the Lipschitz map $f= \eta\circ \varphi$ satisfies $d(f(x), x)\leq \epsilon$ for every $x\in X$ and the observation above applies to $\varphi$. Furthermore, we can arrange it such that for $\delta>0$ sufficiently small we have
    \begin{equation}\label{eq:const-thm-close-supp}
        U_\epsilon \subset \varphi^{-1}(V_\delta) \subset N_\epsilon(U),
    \end{equation}
    where $V_\delta = \{ y\in M \colon d(y,V^c)> \delta\}$ and analogously for $U_\epsilon$. Set $Q = S\on U - k \cdot T \on U$. We may assume that $Q \in \bI_n(X)$ and $\spt \partial Q \subset \partial U$. Otherwise, we replace $U$ by $N_\sigma(U)$ for $\sigma>0$ sufficiently small. We first show that $\partial Q = 0$. The slicing theorem \cite[Theorem 5.6 and 5.7]{ambrosio-kirchheim-2000} implies that there exists $\delta>0$ such that (\ref{eq:const-thm-close-supp}) holds and 
    $$W = S\on \varphi^{-1}(V_\delta) - k \cdot T \on \varphi^{-1}(V_\delta)  \in \bI_n(X).$$
    By \eqref{eq: constancy-thme-independence} we have $\varphi_\#W  = 0$ and thus $f_\#W =0$. Furthermore, since $l^\infty$ is injective, there exists a Lipschitz homotopy $H\colon [0,1]\times X\to l^\infty$ between $f$ and identity on $X$ satisfying $d(H(x,t),x)\leq \epsilon$ for every $x \in X$ and every $t\in [0,1]$. It follows from Proposition \ref{prop: homotopy-formula} that the integral $n$-current $R=H_\#(\bb{0,1}\times \partial W)$ is a filling of $\partial W$, that is, $\partial R = \partial W$ and $R$ supported in $N_\epsilon(\spt \partial W)$. In particular, the integral $n$-current $Q-W-R$ is a filling of $\partial Q$ with support in the closed $2\epsilon$-neighborhood of $\spt(\partial Q)$. Since $\epsilon>0$ was arbitrary, \cite[Lemma 5.6] {basso2023geometric} implies that $\partial Q=0$. If $U=X$, then ones shows exactly as in \cite[Proposition 5.5]{basso2023geometric} that $Q=0$. Otherwise, if $U \neq M$, then $\spt (Q)\neq X$. Therefore, for $\epsilon>0$ sufficiently small we have $\spt(\varphi_\# Q) \neq M$ whenever $\varphi\colon X \to M$ is a Lipschitz map satisfying $d(\varrho,\varphi) \leq \epsilon$. In particular, $\varphi_\# Q = 0$ for such $\varphi$. Analogously as above, using Proposition \ref{prop: homotopy-formula}, we obtain fillings of $Q$ in $N_\epsilon(U)$ for $\epsilon>0$ arbitrarily small. It follows again from \cite[Lemma 5.6]{basso2023geometric} that $Q=0$. This implies $S \on U = k \cdot T\on U$, as claimed.    
\end{proof}

\section{Perturbations and projections}\label{sec:projection-lemma}
In this section we collect a few tools that will be used in the sequel. The following perturbation result is an adaptation of Theorem \ref{thme:weak-perturb-bate}.

\begin{proposition}\label{prop: weak-perturb-simplicial}
    Let $X$ be a compact metric space with finite Hausdorff $n$-measure and $\Sigma$ be a finite 
    simplicial complex equipped with the $l^2$-metric $|\cdot|_{l^2}$. Furthermore, let $\varphi \colon X \to \Sigma$ be a $C$-Lipschitz map and $P \subset X$ be purely $n$-unrectifiable. Then, for any $\epsilon>0$ there exists a $3C$-Lipschitz map $\psi\colon X \to \Sigma$ satisfying
    \begin{enumerate}
        \item $d(\varphi,\psi) \leq \epsilon$;
        \item $\psi\left(\varphi^{-1}(\Delta)\right) \subset \Delta$ for each simplex $\Delta$ in $\Sigma$;
        \item $\Ha^n(\psi(P)) \leq \epsilon$.
    \end{enumerate}
\end{proposition}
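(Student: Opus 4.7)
My plan is to adapt Theorem \ref{thme:weak-perturb-bate} to the simplicial setting by applying it to each top-dimensional simplex of \(\Sigma\) separately, then blending the local perturbations with \(\varphi\) via a Lipschitz cutoff that preserves the simplicial structure. Since \(\Ha^n(\Delta)=0\) for every simplex \(\Delta\) of dimension \(<n\), condition (3) is automatic on preimages of lower-dimensional simplices, so I only need to perturb on preimages of the \(n\)-dimensional simplices. Assuming \(\dim\Sigma\le n\) (as holds in the intended application via Theorem \ref{thm:simplicial-factorization}), I enumerate the \(n\)-simplices as \(\Delta_1,\dots,\Delta_m\), identify each isometrically with a Euclidean simplex in \(\R^n\), and set \(A_j=\varphi^{-1}(\Delta_j)\) and \(P_j=P\cap A_j\). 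Each \(A_j\) is closed in \(X\) with finite \(\Ha^n\)-measure and each \(P_j\) is purely \(n\)-unrectifiable.

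For each \(j\), I will apply Theorem \ref{thme:weak-perturb-bate} to \(\varphi|_{A_j}\colon A_j\to\Delta_j\subset\R^n\) with purely \(n\)-unrectifiable set \(P_j\) and a small parameter \(\epsilon'>0\), producing a \((C+\epsilon')\)-Lipschitz map \(\tilde\psi_j\colon A_j\to\R^n\) satisfying all the conclusions of that theorem. Since \(\Delta_j\) is convex in \(\R^n\), the nearest-point projection \(R_j\colon\R^n\to\Delta_j\) is \(1\)-Lipschitz, so \(\hat\psi_j=R_j\circ\tilde\psi_j\colon A_j\to\Delta_j\) is \((C+\epsilon')\)-Lipschitz, \(\epsilon'\)-close to \(\varphi|_{A_j}\), and satisfies \(\Ha^n(\hat\psi_j(P_j))\le\epsilon'\).

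To glue these into a global map \(\psi\colon X\to\Sigma\) that respects condition (2), I will introduce the Lipschitz cutoff \(t_j\colon X\to[0,1]\) given by \(t_j(x)=\min(d(x,\varphi^{-1}(\partial\Delta_j))/\delta,1)\) for a parameter \(\delta>0\), and set
\[
\psi(x) = (1-t_j(x))\varphi(x) + t_j(x)\hat\psi_j(x)\in\Delta_j,\quad x\in A_j,
\]
extending by \(\psi=\varphi\) on preimages of lower-dimensional simplices. The convex combination stays in \(\Delta_j\) by convexity of \(\Delta_j\) in \(l^2(I)\), and the two definitions agree on the overlaps \(A_j\cap A_{j'}\) (\(j\ne j'\)) because \(t_j\equiv 0\) there. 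For any proper face \(F\subsetneq\Delta_j\) and any \(x\in\varphi^{-1}(F)\), one has \(t_j(x)=0\) and hence \(\psi(x)=\varphi(x)\in F\), verifying (2). Condition (1) is immediate from \(\epsilon'\)-closeness. For (3), I will split each \(P_j\) by the level sets of \(t_j\): the image of \(\{t_j=0\}\) lies in \(\partial\Delta_j\) (\(\Ha^n=0\)); the image of \(\{0<t_j<1\}\) lies in the \((\delta+O(\epsilon'))\)-neighborhood of \(\partial\Delta_j\) in \(\Delta_j\), of \(\Ha^n\) at most \(O(\delta)\cdot\Ha^{n-1}(\partial\Delta_j)\); and the image of \(\{t_j=1\}\) has \(\Ha^n\) at most \(\epsilon'\) by Bate's theorem. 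Summing over \(j\) and choosing \(\delta\) and \(\epsilon'\) sufficiently small yields the bound \(\le\epsilon\).

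The main obstacle will be pinning down the Lipschitz constant \(3C\). The cutoff contributes \(\Lip(t_j)\cdot\|\hat\psi_j-\varphi\|_\infty = \epsilon'/\delta\) to the estimate, and the difference \(\hat\psi_j-\varphi\) contributes another \(2C+\epsilon'\) in Lipschitz constant, giving a total bound of the form \(\Lip(\psi)\le 3C+O(\epsilon'+\epsilon'/\delta)\). To absorb the error into the \(3C\) factor, I would choose \(\epsilon'\ll C\delta\) (so the cutoff term is negligible), which in turn constrains \(\delta\) via the tail estimate \(O(\delta)\cdot\Ha^{n-1}(\partial\Delta_j)\le\epsilon/(2m)\) in condition (3). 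Balancing these competing requirements on \(\epsilon'\) and \(\delta\) is the key technical step, and it is exactly this interplay — Lipschitz slack from the cutoff versus the boundary-tube volume — that explains the factor \(3\) in \(3C\).
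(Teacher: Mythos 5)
Your approach differs from the paper's in one essential respect: you blend the locally perturbed maps with $\varphi$ using an explicit Lipschitz cutoff $t_j$, whereas the paper exploits the built-in feature of Bate's theorem (Theorem \ref{thme:weak-perturb-bate}(1)) that the perturbation \emph{already agrees} with $\varphi$ outside a small neighborhood of $P$. That distinction is not cosmetic, and your version has a genuine gap in the Lipschitz estimate.

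Concretely, writing $\psi = \varphi + t_j(\hat\psi_j - \varphi)$ on $A_j$, your own computation gives $\LIP(\psi|_{A_j}) \le C + \LIP(\hat\psi_j - \varphi) + \LIP(t_j)\,\|\hat\psi_j-\varphi\|_\infty \le 3C + \epsilon' + \epsilon'/\delta$. This exceeds $3C$ for \emph{every} choice of $\epsilon',\delta>0$; choosing $\epsilon'\ll C\delta$ only makes the excess small, not zero, so the conclusion ``$\psi$ is $3C$-Lipschitz'' is not established. There is no way to absorb the cutoff term into the constant $3C$ without some additional mechanism. You also attribute the factor $3$ to the cutoff/tube balancing, but in the paper the factor $3$ arises from a different source, namely the triangle inequality
\[
|\psi(x)-\psi(y)| \le |\psi(x)-\varphi(x)| + |\varphi(x)-\varphi(y)| + |\varphi(y)-\psi(y)|,
\]
applied to points $x,y$ lying in different simplices, combined with the observation that the perturbation parameter $\sigma$ in Bate's theorem can be made smaller than $\delta/(2C)$, which in turn is a lower bound for $d(x,y)$ whenever at least one of the points lies deep inside a simplex (using that deep interior points of a convex simplex are $\delta$-far from anything outside it in the $l^2$ metric). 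No cutoff is needed: the perturbed map is chosen (via Bate) to coincide with $\varphi$ near the boundary bands $K_\Delta\setminus K_{\Delta,\delta/2}$, so the gluing is automatic and the Lipschitz estimate splits into the two clean cases ``both points in boundary bands'' (giving $C$) and ``at least one point deep inside'' (giving $3C$).

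Two secondary issues: (a) you assume $\dim\Sigma\le n$, but the statement allows higher-dimensional $\Sigma$; the paper handles this by applying Bate's theorem on the maximal simplices of dimension $\ge n$ directly (no prior projection to the $n$-skeleton is used in this proposition). (b) You verify agreement of the pieces on overlaps $A_j\cap A_{j'}$ but never estimate $|\psi(x)-\psi(y)|$ for $x\in A_j$, $y\in A_{j'}$ with $j\ne j'$; in a general metric space, piecewise Lipschitz with agreement on overlaps does not imply globally Lipschitz with the same constant, so this cross-simplex estimate has to be carried out explicitly (it is exactly where the paper's case analysis lives). To repair the argument you should drop the cutoff altogether, apply Bate's theorem to the restricted sets $P\cap K_{\Delta,\delta}$ with perturbation scale $\sigma$ satisfying $2C\sigma\le\min\{\epsilon,\delta\}$, and use the resulting agreement with $\varphi$ on boundary bands to both glue and run the two-case Lipschitz estimate.
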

\begin{proof}
    Let $\epsilon>0$. Denote by $\mathcal{S}$ the collection of all simplices in $\Sigma$ of dimension at least $n$ and that are not contained in a simplex of higher dimension. For $\Delta \in \mathcal{S}$ and $\delta>0$, let $K_\Delta = \varphi^{-1}(\Delta)$ and $K_{\Delta,\delta} = \varphi^{-1}(\Delta_\delta)$, where $\Delta_\delta = \big\{y \in \Delta \colon d(y,\partial \Delta)\geq \delta \big\}$. Now, let $\delta>0$ be sufficiently small such that $\Ha^n\left(K_\Delta\setminus K_{\Delta,\delta}\right) \leq \epsilon$  for every $\Delta\in \mathcal{S}$. It follows from Theorem \ref{thme:weak-perturb-bate} that for each $\Delta \in \mathcal{S}$ there exists a $2C$-Lipschitz map $\psi_\Delta\colon K_\Delta \to \Delta$ with the following properties:
    \begin{enumerate}[label=(\roman*)]
        \item $|\varphi(x)-\psi_\Delta(x)|_{l^2}\leq \sigma$ for each $x \in K_\Delta$ and $\varphi(x)=\psi_\Delta(x)$ \\whenever $d(x,P\cap K_{\Delta,\delta})> \sigma $;
        \item $\Ha^n(\psi_\Delta(P \cap K_{\Delta,\delta})) \leq \sigma$.
    \end{enumerate}
    Here, $\sigma>0$ is such that $2C\sigma \leq \min\left\{\epsilon, \delta\right\}$. Let $\Delta \in \mathcal{S}$. Then,
    \begin{equation}\label{eq: weak-pert-simplicial-lower-bound}
        \frac{\delta}{2} \leq |\varphi(x)-\varphi(y)|_{l^2}\leq C d(x,y)
    \end{equation}
    for every $x \in \varphi^{-1}\left(\Sigma\setminus \Delta_{\delta/2}\right)$ and every $y\in K_{\Delta,\delta}$. Therefore, $\varphi(x) = \psi_\Delta(x)$ for every $x \in \varphi^{-1}(\Delta\setminus \Delta_{\delta/2})$. This implies $\psi_\Delta(\varphi^{-1}(\Delta)) \subset \Delta$ for each simplex $\Delta$ in $\Sigma$. Define $\psi \colon X \to \Sigma$ by $\psi(x) = \psi_\Delta(x)$ if $x \in K_\Delta$. By the above $\psi$ is $2C$-Lipschitz on each $K_\Delta$ and satisfies (1) as well as (2). We claim that $\psi$ is $3C$-Lipschitz. Indeed, let $\Delta, \Delta' \in \mathcal{S}$ be distinct. If $x \in K_\Delta\setminus K_{\Delta,\delta/2}$ and $y \in K_{\Delta'}\setminus  K_{\Delta',\delta/2}$, then
    $$|\psi(x)-\psi(y)|_{l^2} = |\varphi(x)-\varphi(y)|_{l^2} \leq C d(x,y).$$
    On the other hand, if $x \in K_{\Delta,\delta/2}$ or $y \in K_{\Delta',\delta/2}$, then one shows exactly as in $\eqref{eq: weak-pert-simplicial-lower-bound}$ that $\delta\leq 2Cd(x,y)$. It follows from (i) that 
    $$        |\psi(x)-\psi(y)|_{l^2} \leq |\psi(x)-\varphi(x)|_{l^2} + |\varphi(x)-\varphi(y)|_{l^2} + |\varphi(y)-\psi(y)|_{l^2} \leq 3C d(x,y).$$
    Finally, by the choice of $\delta$ and (ii) we have
    $$ \Ha^n\left(\psi(P)\right) \leq  \sum_{\Delta\in \mathcal{S}} \Ha^n\left(\psi(P \cap K_{\Delta,\delta})\right) + \Ha^n\left(\psi((K\setminus K_{\Delta,\delta}))\right) \leq 3|\mathcal{S}| C \epsilon.$$
    Since $\epsilon>0$ was arbitrary, this shows that $\psi$ satisfies (3) and completes the proof.
\end{proof}

We will also need the following version of the deformation theorem.
\begin{lemma}\label{lemma: deformation-projection}
    Let $\Sigma$ be a finite simplicial complex of dimension $N$ and equipped with the $l^2$-metric. Suppose that there exist finite Borel measures $\mu_1, \mu_2$ on $\Sigma$ that are supported in a closed set $K \subset \Sigma$ with finite Hausdorff $n$-measure and $n < N$. Then, there exists a map $p \colon \Sigma \to \Sigma$ that restricts to the identity on the $(N-1)$-skeleton of $\Sigma$ and $p(\Delta)\subset \Delta$ for each simplex $\Delta$ in $\Sigma$. Furthermore, $p|_{K}$ is Lipschitz, and $p(K) \subset \Sigma^{N-1}$ and
    \begin{equation}\label{eq: deformation-projection}
        \int_{\inte\Delta} (\Lip(p)(x))^n \; d\mu_i(x) \leq C \mu_i(\inte\Delta)
    \end{equation}
    for $i=1,2$ and every simplex $\Delta$ in $\Sigma$. Here, $C>0$ depends only on $N$.
\end{lemma}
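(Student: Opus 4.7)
The plan is to build $p$ by performing a central projection on each top-dimensional (i.e.\ $N$-dimensional) simplex of $\Sigma$ while leaving the $(N-1)$-skeleton fixed. Concretely, for each $N$-simplex $\Delta$ I will choose a center $c_\Delta \in \operatorname{int}\Delta$ and define $p_\Delta\colon \Delta\setminus\{c_\Delta\}\to\partial\Delta$ as the radial projection from $c_\Delta$ (extending $p_\Delta$ arbitrarily at $c_\Delta$). Since $p_\Delta$ is the identity on $\partial\Delta$, setting $p=\mathrm{id}$ on $\Sigma^{N-1}$ and $p=p_\Delta$ on $\operatorname{int}\Delta$ for each top simplex gives a well-defined map $\Sigma\to\Sigma$ that automatically satisfies $p(\Delta)\subset\Delta$ and $p(K)\subset\Sigma^{N-1}$ (assuming $K\cap\operatorname{int}\Delta$ avoids $c_\Delta$). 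A standard computation on a Euclidean simplex shows that if $c_\Delta$ stays in a fixed ball $B_\Delta$ around the barycenter of $\Delta$ (say of radius equal to half the inradius), then
\[
\Lip(p_\Delta)(x)\;\leq\;\frac{C_0}{|x-c_\Delta|_{l^2}}\quad\text{for every } x\in\operatorname{int}\Delta\setminus\{c_\Delta\},
\]
with $C_0=C_0(N)$.

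Next I would choose $c_\Delta$ by an averaging argument over $B_\Delta$. Using Fubini and the bound above,
\[
\int_{B_\Delta}\int_{\operatorname{int}\Delta}(\Lip(p_c)(x))^n\,d\mu_i(x)\,dc
\;\leq\; C_0^n\int_{\operatorname{int}\Delta}\Bigl(\int_{B_\Delta}\frac{dc}{|x-c|_{l^2}^n}\Bigr)d\mu_i(x).
\]
Since $n<N$, the inner integral is uniformly bounded by a constant $C_1=C_1(N)$ for $x\in\Delta$, because $|z|^{-n}$ is integrable on bounded subsets of $\mathbb{R}^N$ when $n<N$. Dividing by $\mathcal{L}^N(B_\Delta)$ and applying Markov's inequality to both $\mu_1$ and $\mu_2$, the set of centers $c\in B_\Delta$ that fail the bound $\int(\Lip(p_c))^n d\mu_i\leq C\mu_i(\operatorname{int}\Delta)$ for either $i$ has Lebesgue measure at most $\tfrac{1}{2}\mathcal{L}^N(B_\Delta)$ (with $C$ depending only on $N$). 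Finally, since $\mathcal{H}^n(K)<\infty$ and $n<N$, we have $\mathcal{L}^N(K\cap\operatorname{int}\Delta)=0$, so $c_\Delta$ can be chosen in $B_\Delta$ satisfying the integral estimates simultaneously and with $c_\Delta\notin K$.

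It remains to verify that $p|_K$ is Lipschitz. On each top simplex $\Delta$, the set $K\cap\Delta$ is compact and disjoint from $c_\Delta$, so $\mathrm{dist}(K,c_\Delta)>0$ and the bound from Step 1 gives a finite Lipschitz constant $L_\Delta$ for $p|_{K\cap\Delta}$. To patch these bounds across simplices I would use Lemma~\ref{lemma: close-points-neighb-simplicies}: for $x\in\Delta$ and $y\in\Delta'$ in distinct top simplices sharing a face, there exists $z\in\Delta\cap\Delta'\subset\Sigma^{N-1}$ with $|x-z|+|z-y|\leq 4\sqrt{N}|x-y|$; since $p(z)=z$, the triangle inequality yields $|p(x)-p(y)|\leq L_\Delta|x-z|+|z-y|(L_{\Delta'}+1)$, and we obtain a global Lipschitz bound because $\Sigma$ contains only finitely many top simplices.

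The main obstacle is the center-selection step: we need to pick $c_\Delta$ satisfying three conditions at once (the $\mu_1$ and $\mu_2$ integral estimates, and $c_\Delta\notin K$), all with a constant $C$ depending only on $N$. The crucial ingredient is the strict inequality $n<N$, which makes both the radial potential $\int_{B_\Delta}|x-c|^{-n}dc$ finite and the set $K$ negligible in the $N$-dimensional Lebesgue sense, opening room to apply Markov together with a measure-theoretic pigeonhole.
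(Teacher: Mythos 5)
Your proposal is correct and follows essentially the same route as the paper: choose a radial projection from a well-placed center in each top-dimensional simplex, where the center is selected by a Federer--Fleming averaging argument (which the paper invokes by citing Federer--Fleming directly) so that the $\mu_1,\mu_2$ integral estimates hold, and additionally chosen outside $K$ since $n<N$ forces $\mathcal{H}^N(K)=0$. The only cosmetic remark is that the pointwise bound $\Lip(p_\Delta)(x)\le C_0/|x-c_\Delta|$ should carry a factor of the simplex side-length to be dimensionally consistent, but this cancels when you normalize by $\mathcal{L}^N(B_\Delta)$ in the Markov step, so the final constant still depends only on $N$.
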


\begin{proof}
    Let $\Delta\in\Sigma^N$. By the same argument as in \cite{fed-flem} there exists a Borel subset $A \subset \inte\Delta$ with positive and finite Hausdorff $N$-measure and the following property. For every $x \in A$, the radial projection $p$ with $x$ as center satisfies
    \begin{equation}\label{eq: easy-deform}
        \int_{\inte\Delta} (\Lip(p)(y))^{n} \; d\mu_i(y) \leq C \mu_i({\inte\Delta}),
    \end{equation}
    for $i=1,2$ and a constant $C>0$ only depending on $N$. Since $\Ha^N(K)= 0$ and $X$ is compact, it follows that for each $\Delta\in\Sigma^N$ there exists a radial projection $p_\Delta$ satisfying \eqref{eq: easy-deform} and such that the projection center $x_\Delta$ of $p_\Delta$ satisfies $x_\Delta \in {\inte\Delta} \setminus K$. In particular, $p_\Delta|_{K}$ is Lipschitz for each $\Delta\in\Sigma^N$. Furthermore, each projection $p_\Delta$ restricts to the identity on the $(N-1)$-skeleton of $\Sigma$ and $p_\Delta(\Delta)\subset \Delta$ for each simplex $\Delta$ in $\Sigma$. Therefore, we obtain a map $p$, as claimed, by defining $p=p_\Delta$ on each $\Delta \in \Sigma^N$ and defining $p$ as the identity elsewhere.
\end{proof}

The maps given by the previous lemma are the building blocks to prove the next proposition.

\begin{proposition}\label{prop: deformation-projection-no-rect}
    Let $X$ be a compact metric space with finite Hausdorff $n$-measure and $\Sigma$ be a finite simplicial complex of dimension $N \geq n$ and equipped with the $l^2$-metric. Let $P \subset X$ be purely $n$-unrectifiable. If there exists a $L$-Lipschitz map $\varphi\colon X \to \Sigma$ such that $\Ha^n(\varphi(P)) \leq \epsilon$, then there exists a map $p \colon \Sigma \to \Sigma$ with $p(\Delta) \subset \Delta$ for each simplex $\Delta$ in $\Sigma$ and $p \circ \varphi$ is Lipschitz with image in the $n$-skeleton of $\Sigma$. Moreover, $\Ha^n(p(\varphi(P))) \leq C\epsilon$ and for each $\Delta \in \Sigma^n$ we have
    \begin{equation}\label{eq:deformation-no-rect}
        \int_{\Delta} N(p \circ \varphi,y) \; d\Ha^n(y) \leq C \Ha^n(\varphi^{-1}(\st\Delta)).
    \end{equation}
    Here, the constant $C>1$ only depends on $L,N$ and $n$.
\end{proposition}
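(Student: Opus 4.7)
\emph{Plan.} I construct $p$ as an iterated radial projection by applying Lemma \ref{lemma: deformation-projection} once for each step from dimension $N$ down to $n+1$. At each step $k$, I will choose a projection $p_k:\Sigma\to\Sigma$ that fixes $\Sigma^{k-1}$, preserves each simplex, and is Lipschitz on the current image of $X$ in $\Sigma^k$. Writing $R_{k+1}=p_{k+1}\circ\cdots\circ p_N$ (with $R_{N+1}=\mathrm{id}$) and $g_{k+1}=R_{k+1}\circ\varphi$, the inductive construction ensures $g_{k+1}(X)\subset\Sigma^k$ has finite $\Ha^n$-measure, so the lemma applies at every step. The final map $p=p_{n+1}\circ\cdots\circ p_N$ satisfies $p(\Delta)\subset\Delta$ for each simplex (as each $p_k$ does), and $p\circ\varphi$ is Lipschitz with image in $\Sigma^n$.

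At step $k$ I will apply the lemma simultaneously to the following finite collection of Borel measures, all supported in the common set $g_{k+1}(X)$ of finite $\Ha^n$-measure: the measure $\Ha^n|_{g_{k+1}(P)}$ (for tracking the $\Ha^n$-measure of the $P$-image), the pushed-forward volume $(g_{k+1})_\#(\Ha^n|_X)$, and, for each $\Delta\in\Sigma^n$, the weighted auxiliary measure $\tau_{k,\Delta}:=(R_{k+1})_\#\bigl(\mathbf{1}_{\st\Delta}(\Lip R_{k+1})^n\,d\nu\bigr)$, where $\nu=\varphi_\#\Ha^n|_X$. Although Lemma \ref{lemma: deformation-projection} is stated for two measures, its proof chooses a center generic with respect to the bad sets of all measures at once and works equally well for any finite family with common support in a set of finite $\Ha^n$-measure; this produces $p_k$ with the required structural properties together with the uniform integral bound $\int_{\inte\Delta_k}(\Lip p_k)^n\,d\rho\le C\rho(\inte\Delta_k)$ for every $\Delta_k\in\Sigma^k$ and every $\rho$ in the family.

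For the bound $\Ha^n(p(\varphi(P)))\le C\epsilon$, I use the area inequality for Lipschitz maps on each piece $g_{k+1}(P)\cap\inte\Delta_k$, combined with the integral bound for $\Ha^n|_{g_{k+1}(P)}$, to obtain $\Ha^n(p_k(g_{k+1}(P)))\le C\,\Ha^n(g_{k+1}(P))$ at every step; iterating $N-n$ times yields the claim from $\Ha^n(\varphi(P))\le\epsilon$. For the bound on $\int_\Delta N(p\circ\varphi,y)\,d\Ha^n(y)$, I first observe that $p^{-1}(\inte\Delta)\subset\st\Delta$ (since $p$ preserves simplices and $\Delta$ is $n$-dimensional) and then apply the coarea inequality together with the chain rule $\Lip(p\circ\varphi)\le \LIP(\varphi)\cdot(\Lip p)\circ\varphi$ to reduce the problem to showing
\[
\int_{\st\Delta}(\Lip p)^n\,d\nu\le C\,\Ha^n(\varphi^{-1}(\st\Delta)).
\]
Writing the left-hand side as $\tau_{n,\Delta}(\Sigma)$ and using the pointwise estimate $(\Lip R_k)^n(y)\le(\Lip p_k)^n(R_{k+1}(y))\cdot(\Lip R_{k+1})^n(y)$ together with the lemma's integral bound for $\tau_{k,\Delta}$, I derive the one-step recursion $\tau_{k-1,\Delta}(\Sigma)\le C'\tau_{k,\Delta}(\Sigma)$, which telescopes to $\tau_{n,\Delta}(\Sigma)\le(C')^{N-n}\tau_{N,\Delta}(\Sigma)=(C')^{N-n}\nu(\st\Delta)=(C')^{N-n}\Ha^n(\varphi^{-1}(\st\Delta))$.

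The main obstacle is the latter integral bound: a naive iteration using only the pushed-forward volume measure loses a factor of $\Lip p_k$ at every step because the chain rule for pointwise Lipschitz constants multiplies rather than telescopes. The weighted auxiliary measures $\tau_{k,\Delta}$ are designed precisely so that advancing one step of the push-forward introduces exactly one $(\Lip p_k)^n$ factor, which the lemma's integral bound absorbs at linear cost; the recursion then collapses to a clean constant depending only on $L$, $N$, and $n$.
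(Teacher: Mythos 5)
Your overall scheme — iterating radial projections from $\Sigma^N$ down to $\Sigma^n$, applying Lemma~\ref{lemma: deformation-projection} at each step, and controlling the multiplicity via an integral estimate over stars — is the right shape, and you are also right that a naive iteration with a single pushed-forward volume measure loses a factor per step. However, your fix does not close the gap.

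The issue is the family of auxiliary measures $\tau_{k,\Delta}$, one for each $\Delta\in\Sigma^n$. The Federer--Fleming averaging argument underlying Lemma~\ref{lemma: deformation-projection} gives, for a single measure $\mu$, an average bound $\dashint_{\Delta_k}\bigl(\int_{\inte\Delta_k}(\Lip p_x)^n\,d\mu\bigr)d\Ha^N(x)\leq c(N)\,\mu(\inte\Delta_k)$; a Markov/union-bound argument over $m$ measures then produces a common good center at the cost of a constant of order $m\cdot c(N)$, not $c(N)$. Your family has size $|\Sigma^n|+2$, which grows unboundedly as $\epsilon\to 0$, so the claim that the "uniform integral bound" holds with a constant depending only on $N$ is unsubstantiated as written, and the final constant would fail to depend only on $L,N,n$ — which is exactly the quantitative statement being proved and is essential for the use of the proposition in Theorem~\ref{thm: lip-mfld-approx-only-degree}. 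This can be repaired by one further observation: since each $p_j$ preserves simplices, $\tau_{k,\Delta}$ is supported in $\overline{\st\Delta}$, so for any fixed $\Delta_k\in\Sigma^k$ the measures $\tau_{k,\Delta}$ that charge $\inte\Delta_k$ are exactly those with $\Delta$ an $n$-face of $\Delta_k$, of which there are at most $\binom{N+1}{n+1}$. You need to state this and run the Markov argument only over that bounded subfamily (plus the two global measures) per simplex; as written this step is a genuine gap.

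It is also worth noting that the paper's proof avoids the family issue entirely. Rather than weighted pushed-forward Lipschitz measures per $n$-simplex, it tracks the single \emph{multiplicity measure} $\mu_1=N(g_{k+1},\cdot)\,\Ha^n$ through the iteration, applying the area formula at each step to write
$\int_{\partial\Delta_k}N(p_k\circ g_{k+1},x,\Delta_k)\,d\Ha^n(x)=\int_{\Delta_k}\mathbf{J}(D_y p_k)\,N(g_{k+1},y)\,d\Ha^n(y)\leq\int_{\inte\Delta_k}(\Lip p_k)^n\,d\mu_1+\mu_1(\partial\Delta_k)$,
so the Jacobian factor is absorbed directly by the same Lemma~\ref{lemma: deformation-projection} bound applied to $\mu_1$ itself — no per-simplex bookkeeping and no chain-rule loss. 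Your approach, once the bounded-intersection observation is added, is a valid alternative, but it is strictly more elaborate than the paper's use of the area formula and multiplicity, which localizes automatically.
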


\begin{proof}
    The proof is by induction over the dimension of the skeleton of $\Sigma$. First, we construct a map $p\colon \Sigma \to \Sigma$ that has the desired properties for the $(N-1)$-skeleton $\Sigma^{N-1}$ instead of the $n$-skeleton $\Sigma^n$. Set $\mu_1 = N(\varphi, \cdot ) \Ha^n$ and $\mu_2 = \Ha^n\on  \varphi(P)$. Since $\varphi$ is Lipschitz, it follows from the coarea formula that $\mu_1$ and $\mu_2$ are finite Borel measures on $\Sigma$ supported in the compact set $\varphi(X)$ with finite Hausdorff $n$-measure. Let $p$ and $C(N)$ be the map and constant obtained by Lemma \ref{lemma: deformation-projection}, respectively. Clearly, the first three properties are satisfied by $p$. Let $\Delta \in \Sigma^N$. The area formula and \eqref{eq: deformation-projection} imply      
    \begin{align*}
        \int_{\partial\Delta} N(p\circ \varphi,x,\Delta) \; d\Ha^n(x) &= \int_{\partial\Delta} \sum_{y \in p^{-1}(x)\cap \Delta} N(\varphi,y) \; d\Ha^n(x) 
        \\
        &= \int_\Delta \mathbf{J}(D_y p) N(\varphi,y) \; d\Ha^n(y)
        \\
        &\leq \int_{\inte \Delta } (\Lip(p)(y))^{n} \; d\mu_1(y) + \int_{\partial \Delta}  N(\varphi,y) \; d\Ha^n(y)
        \\
        &\leq C(N) \mu_1(\Delta) = C(N)\int_\Delta N(\varphi,y) \; d\Ha^n(y).
    \end{align*}
    
    Therefore, for each $\Delta\in \Sigma^{N-1}$ we have 

    \begin{equation}\label{eq:deform-no-rect-induction}
        \int_{\Delta} N(p \circ \varphi,x) \; d\Ha^n(x) \leq C(N) \int_{\st\Delta} N(\varphi,y) \; d\Ha^n(y).
    \end{equation}

    Moreover, by an analogous argument as above,
    \begin{align*}
        \Ha^n(p(\varphi(P))) \leq \int_{\Sigma^{N-1}} N(p,x,\varphi(P)) \; d\Ha^n(x) \leq C(N) \Ha^n(\varphi(P)).
    \end{align*}

    For the induction step, suppose that for some value $k< N-n$ there exists a map $p\colon \Sigma \to \Sigma$ satisfying the desired properties for the $(N-k)$-skeleton and some constant $C>0$ but with \eqref{eq:deform-no-rect-induction} instead of \eqref{eq:deformation-no-rect}. Define $\mu_1 = N(p\circ \varphi,\cdot) \Ha^n$ and $\mu_2 = \Ha^n\on p(\varphi(P))$. By the induction hypothesis, we have that $\mu_1$ and $\mu_2$ are finite Borel measures supported in a compact set of finite Hausdorff $n$-measure which is contained in the $(N-k)$-skeleton. Let $\pi\colon \Sigma^{N-k} \to \Sigma^{N-k}$ and $C(k)$ be the map and constant obtained by Lemma \ref{lemma: deformation-projection} applied to $\mu_1,\mu_2$ and $\Sigma^{N-k}$. An analogous argument as above shows that $\Ha^n(\pi(p(\varphi(P)))) \leq C(k) C \Ha^n(\varphi(P))$. Furthermore, by the area formula, and \eqref{eq: deformation-projection} and the induction hypothesis we conclude that for each simplex $\Delta \in \Sigma^{N-k-1}$
    \begin{align*}
        \int_{\partial\Delta} N(\pi \circ (p\circ \varphi),x, \Delta) \; d\Ha^n(x) &= \int_{\partial\Delta} \sum_{y \in \pi^{-1}(x) \cap \Delta} N(p\circ \varphi,y) \; d\Ha^n(x) 
        \\
        &\leq C(k) \int_\Delta N(p\circ\varphi,y) \; d\Ha^n(y)
        \\
        &\leq C(k)C \int_{\st\Delta} N(\varphi,y) \; d\Ha^n(y).
    \end{align*}
    From this \eqref{eq:deformation-no-rect} is easily verified using the coarea inequality.
\end{proof}

\section{Approximation by Lipschitz manifolds}\label{sec: approx-lip-mfld}
Throughout this section, let $X$ be a metric $n$-manifold such that $\Ha^n(X)<\infty$ and $\dim_NX<\infty$. Suppose that $\varrho \colon X \to M$ is a homeomorphism onto the closed, oriented, smooth $n$-manifold $M$.

\medskip The goal of this section is to prove the following theorem, which proves the implication (2)$\implies$(3) in Theorem \ref{thme: main}.  In the statement, we assume that $X$ satisfies Theorem \ref{thme: main}(2), that is, there exists $D\ge 1$ so that $|\iota(f,x)|\le D$ for almost all $x\in E$ whenever $f\in \LIP(X,\R^n)$ and $E\subset X$ are such that $f|_E$ is bi-Lipschitz.

\begin{theorem}\label{thm: lip-mfld-approx-only-degree}
    There exists a sequence of metric spaces $X_k$ and bi-Lipschitz maps $\psi_k\colon M \to X_k$ such that the following properties hold.
    \begin{enumerate}
        \item The homeomorphism $\varphi_k= \psi_k \circ \varrho$ are $\epsilon_k$-isometries $\varphi_k \colon X \to X_k$ with $\epsilon_k \to 0$ as $k \to 0$;
        \item There is a constant $C>1$ such that
        $$\limsup_{k \to \infty} \Ha^n(\varphi_k(K)) \leq C \Ha^n(K)$$
         for every compact $K\subset X$.
    \end{enumerate}
\end{theorem}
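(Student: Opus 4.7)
The plan is to construct the approximating spaces $X_k$ in three stages: (i) approximate $X$ by a simplicial complex of dimension $\le N := \dim_N X$ via Theorem \ref{thm:simplicial-factorization}, (ii) project the approximation into the $n$-skeleton using Propositions \ref{prop: weak-perturb-simplicial} and \ref{prop: deformation-projection-no-rect}, and (iii) extract a bi-Lipschitz model of $M$ from the resulting simplicial data, using the local index bound (2) to control its volume.

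Concretely, I would embed $X \subset l^\infty$ isometrically and, for each $k$, apply Theorem \ref{thm:simplicial-factorization} with side length $\epsilon_k \to 0$ to obtain finite simplicial complexes $\Sigma_k$ of dimension at most $N$, equipped with the $l^2$-metric, together with uniformly Lipschitz maps $\varphi_k^{(0)}\colon X\to \Sigma_k$ and $\psi_k^{(0)}\colon \Sigma_k\to l^\infty$ (the latter Lipschitz on each simplex) satisfying $d(x,\psi_k^{(0)}(\varphi_k^{(0)}(x))) \le C\epsilon_k$. Decomposing $X=R\cup P$ into its $n$-rectifiable and purely $n$-unrectifiable parts, Proposition \ref{prop: weak-perturb-simplicial} yields a simplex-preserving perturbation $\widetilde{\varphi}_k$ with $\Ha^n(\widetilde{\varphi}_k(P)) \le \epsilon_k$, and Proposition \ref{prop: deformation-projection-no-rect} then produces a Lipschitz map $f_k\colon X\to \Sigma_k^n$ with
\[
\int_\Delta N(f_k,y)\,d\Ha^n(y) \le C\,\Ha^n\!\big(\widetilde{\varphi}_k^{-1}(\st\Delta)\big), \qquad \Delta \in \Sigma_k^n.
\]
Summing over $\Delta$ and using that the stars $\st\Delta$ in $\Sigma_k$ have bounded overlap (since $\dim\Sigma_k \le N$) already controls $\Ha^n(f_k(X)) \le C\,\Ha^n(X)$.

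For the final stage I would use $\varrho$ together with a Lipschitz triangulation of $M$ of size comparable to $\epsilon_k$ to select a subcomplex $C_k \subset \Sigma_k^n$ homeomorphic to $M$, and set $X_k := C_k$ endowed with the piecewise-Euclidean metric. The bi-Lipschitz homeomorphism $\psi_k\colon M\to X_k$ is assembled simplex-by-simplex from the two triangulations, and the composition $\varphi_k=\psi_k\circ \varrho$ becomes an $\epsilon_k'$-isometry because $\psi_k^{(0)}\circ f_k$ is $C\epsilon_k$-close (in $l^\infty$) to the inclusion $X\hookrightarrow l^\infty$, which one can verify by a Lipschitz homotopy argument using Lemma \ref{lemma: lip-approx-easy}. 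The local index bound (2) enters here via Lemma \ref{lemma: equiv-def-degree-bound}: on each $n$-simplex $\Delta$, identified with a subset of $\R^n$, it limits the number of sheets of $f_k$ on the rectifiable part and thereby caps the number of simplices of $\Sigma_k^n$ needed to realize the fundamental class $\varrho_*[M]$ inside the image. This lets us localize the above bound on $\Ha^n(f_k(X))$ to compact subsets, giving $\limsup_{k\to\infty} \Ha^n(\varphi_k(K))\le C\,\Ha^n(K)$ for every compact $K\subset X$.

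The main obstacle is stage three — producing a bona fide bi-Lipschitz copy of $M$ inside the higher-dimensional complex $\Sigma_k^n$ that is simultaneously compatible with the topology carried by $\varrho$, close in $l^\infty$ to the inclusion, and volume-controlled. The local index bound (2) is precisely the ingredient that prevents the combinatorial selection of $C_k$ from being wastefully multi-sheeted over any $n$-simplex, which is what keeps $\Ha^n(X_k)$ comparable to $\Ha^n(X)$ and distinguishes this construction from the Lipschitz extension procedure of \cite{basso2021undistorted}.
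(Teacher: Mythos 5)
Your first two stages match the paper's strategy: you correctly invoke Theorem~\ref{thm:simplicial-factorization}, Proposition~\ref{prop: weak-perturb-simplicial}, and Proposition~\ref{prop: deformation-projection-no-rect} to obtain, for each scale, a finite simplicial complex $\Sigma$ of dimension $\le N$ together with a Lipschitz map into its $n$-skeleton satisfying an integral multiplicity bound on each $n$-simplex. This is exactly what Lemma~\ref{lemma: lip-mfld-approx-setup} records.

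Your third stage, however, contains a gap that I don't think can be repaired as stated. You propose to select a subcomplex $C_k\subset\Sigma_k^n$ homeomorphic to $M$ and take $X_k:=C_k$ with the piecewise-Euclidean metric, assembling $\psi_k\colon M\to C_k$ ``simplex-by-simplex from the two triangulations.'' But $\Sigma_k$ is an abstract nerve-type complex arising from a Nagata-dimension cover of $X$; its $n$-skeleton is typically a highly singular branched polyhedron, and there is no reason for it to contain a subcomplex homeomorphic to $M$, let alone one that the Lipschitz map $f_k$ (or its projection to $\Sigma_k^n$) traces out in a combinatorially coherent way. Since $f_k$ is in general far from injective, the image $f_k(X)$ is not a manifold, and the combinatorics of $\Sigma_k^n$ need not match any triangulation of $M$. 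The paper sidesteps this entirely with a graph construction: after producing a Lipschitz map $\psi\colon M\to\Sigma^n$ (not $X\to\Sigma^n$!), it sets $\Psi(x)=(x,g(\psi(x)))\in M\times l^\infty$ with the weighted metric $d_Y=\epsilon\,d_M+d_{l^\infty}$, so that $\Psi$ is automatically injective by the first coordinate (hence bi-Lipschitz onto $Y=\Psi(M)$), while the $\epsilon$-weight makes the $M$-factor contribute only $O(\epsilon)$ to distances, giving the approximate isometry (Lemma~\ref{lemma: lip-mfld-approx-rough-isom}). No manifold structure on $\Sigma^n$ is ever needed.

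Relatedly, your use of the local index bound is too vague to substitute for Lemma~\ref{lemma: good-lip-approx-into-simplicial-complex}. You say the bound ``caps the number of simplices of $\Sigma_k^n$ needed to realize $\varrho_*[M]$,'' but what the paper actually needs is a map $\psi\colon M\to\Sigma^n$ whose \emph{Jacobian} (not merely integrated multiplicity) is controlled, so that the area formula applied to $\Psi$ yields Lemma~\ref{lemma: lip-mfld-approx-volume-bound}. The projection $h$ from Proposition~\ref{prop: deformation-projection-no-rect} only gives the integral bound $\int_\Delta N(h,\cdot)\,d\Ha^n\lesssim\Ha^n(f^{-1}(\st\Delta))$; without further work this does not bound the pointwise sheet count. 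The local index bound enters via a genuine surgery: one picks a good point $z_i$ in each $n$-simplex $\Delta_i$ whose preimages under $\eta=h\circ\varrho^{-1}$ all have $|\iota(\eta,x_{i,j})|\le D$, and replaces $\eta$ in small balls around those preimages by canonical degree-$D_{i,j}$ models (Hopf maps). This is what converts the bounded index into the key estimate $\int_{\Delta_i}N(\psi,\cdot)\,d\Ha^n\le D\cdot N(\eta,z_i)\Ha^n(\Delta_i)\le 2CD\,\Ha^n(f^{-1}(\st\Delta_i))$, and it is absent from your proposal. In short: the missing ideas are (i) the graph construction $\Psi\colon M\to M\times l^\infty$ with the $\epsilon$-weighted product metric, which removes the need to realize $M$ inside $\Sigma^n$ at all, and (ii) the Hopf-map surgery replacing $\eta$ near generic fibers, which is where the local index bound does its work.
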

Here, the constant $C$ depends only on $D$ and the data of $X$ (i.e. $n,\ \dim_NX$ and the constants appearing in the definition of Nagata dimension).

\medskip For the proof of Theorem \ref{thm: lip-mfld-approx-only-degree}, fix $\epsilon>0$ and write $N= \dim_N X$ and let $X = E \cup P $ be a partition, where $E$ is $n$-rectifiable and $P$ is purely $n$-unrectifiable. We embed $X \subset l^\infty$. First, we prove the following lemma, which introduces the objects we need to construct the manifolds and bi-Lipschitz homeomorphism in Theorem \ref{thm: lip-mfld-approx-only-degree}.

\begin{lemma}\label{lemma: lip-mfld-approx-setup}
    There exists a finite simplicial complex $\Sigma$ of dimension $\leq N$ equipped with the $l^2$-metric $|\cdot|_{l^2}$ and every simplex in $\Sigma$ has side length $\epsilon$. Furthermore, there exist a constant $C>0$, only depending on the data of $X$, and Lipschitz maps $f,h\colon X \to \Sigma $ and $g\colon \Sigma \to l^\infty$ with the following properties:
    \begin{enumerate}
    \item $f$ is $C-$Lipschitz and $g$ is $C$-Lipschitz on each simplex $\Delta$ in $\Sigma$;
    \item $d(x,g(f(x))) \leq C\epsilon$ and $d(x,g(h(x))) \leq C\epsilon$ for every $x \in X$;
    \item the image of $h$ is contained in the $n$-skeleton $\Sigma^n$ of $\Sigma$;
    \item $f(x)$ and $h(x)$ are contained in the same simplex for each $x \in X$;
    \item $\Ha^n(h(P)) < \epsilon^n/(2n!)$;
    \item for each $\Delta \in \Sigma^n$
    $$ \int_{\Delta} N(h,y) \; d\Ha^n(y) \leq C \Ha^n(f^{-1}(\st\Delta)).$$
\end{enumerate}

\end{lemma}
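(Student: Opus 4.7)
My plan is to chain together the three ingredients developed in Sections 3 and 4: the Nagata--dimension simplicial approximation of Theorem \ref{thm:simplicial-factorization}, the rectifiability-aware perturbation of Proposition \ref{prop: weak-perturb-simplicial}, and the Federer--Fleming-type projection onto the $n$-skeleton of Proposition \ref{prop: deformation-projection-no-rect}. The complex $\Sigma$ and the map $g$ will come from the first step, while $f$ and $h$ will be manufactured from the other two.

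First I would invoke Theorem \ref{thm:simplicial-factorization} at scale $\epsilon$ applied to $X\subset l^\infty$ to obtain a finite simplicial complex $\Sigma$ of dimension at most $N$ with Euclidean simplices of side length $\epsilon$, together with a $C_0$-Lipschitz map $\varphi\colon X\to \Sigma$ and a map $g\colon \Sigma\to l^\infty$ that is $C_0$-Lipschitz on each simplex and satisfies $d(x,g(\varphi(x)))\le C_0\epsilon$, where $C_0$ depends only on $N$. Next, given an auxiliary parameter $\epsilon'\in(0,\epsilon]$ to be fixed at the end, I would apply Proposition \ref{prop: weak-perturb-simplicial} to $\varphi$ and the purely $n$-unrectifiable set $P$ to obtain a $3C_0$-Lipschitz map $f\colon X\to \Sigma$ with $d(\varphi,f)\le \epsilon'$, the simplex-preservation property $f(\varphi^{-1}(\Delta))\subset \Delta$ for every simplex $\Delta$, and $\Ha^n(f(P))\le \epsilon'$. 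Finally I would apply Proposition \ref{prop: deformation-projection-no-rect} to $f$, producing a map $p\colon \Sigma\to\Sigma$ with $p(\Delta)\subset \Delta$ such that $h:=p\circ f$ takes values in $\Sigma^n$, is Lipschitz, satisfies the integrability estimate (6), and obeys $\Ha^n(h(P))\le C_1\epsilon'$ with $C_1$ depending only on $N$ and $n$.

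It remains to verify (1)--(5). Properties (1), (3), and (6) are immediate from the construction. Choosing $\epsilon'$ so that $C_1\epsilon' < \epsilon^n/(2n!)$ yields (5). Property (4) follows because $p$ preserves each simplex: if $f(x)\in\Delta$, then $h(x)=p(f(x))\in\Delta$. For (2), the simplex-preservation property of $f$ guarantees that $\varphi(x)$ and $f(x)$ lie in a common simplex $\Delta$; since $g$ is $C_0$-Lipschitz on $\Delta$ and $|\varphi(x)-f(x)|_{l^2}\le \epsilon'\le\epsilon$, the triangle inequality gives $d(x,g(f(x)))\le C_0\epsilon+C_0\epsilon'\le 2C_0\epsilon$. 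The same reasoning combined with $|f(x)-h(x)|_{l^2}\le\sqrt{2}\,\epsilon$ (since $f(x)$ and $h(x)$ share the simplex $\Delta$ of diameter $\sqrt{2}\,\epsilon$) yields the analogous estimate for $d(x,g(h(x)))$.

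The main obstacle I anticipate is not conceptual but bookkeeping: one must ensure that the perturbation $f$ inherits the property that $\varphi(x)$ and $f(x)$ lie in a common simplex -- this is precisely the role of the inclusion $f(\varphi^{-1}(\Delta))\subset\Delta$ in Proposition \ref{prop: weak-perturb-simplicial} -- and one must chase the Lipschitz constants $C_0,C_1$ through the composition so that the final constant $C$ depends only on $N$ and $n$. With that discipline the five properties fall out directly from the three quoted results.
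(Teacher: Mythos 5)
Your proof is correct and follows the same three-step strategy as the paper: Theorem~\ref{thm:simplicial-factorization} to produce $\Sigma$ and $g$, then Proposition~\ref{prop: weak-perturb-simplicial} to make the image of $P$ small while preserving simplices, then Proposition~\ref{prop: deformation-projection-no-rect} to project into the $n$-skeleton. The only small difference is a naming choice: you take $f$ in the lemma statement to be the \emph{perturbed} map, which makes property (6) immediate from Proposition~\ref{prop: deformation-projection-no-rect}, whereas the paper keeps $f$ as the original map from Theorem~\ref{thm:simplicial-factorization} and then (6) follows because the simplex-preservation property of Proposition~\ref{prop: weak-perturb-simplicial} forces the perturbation's preimages of open stars to be contained in those of $f$.
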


\begin{proof}
It follows from Theorem \ref{thm:simplicial-factorization}, that there exists a finite simplicial complex $\Sigma$ of dimension $\leq N$ equipped with the $l^2$-metric and every simplex in $\Sigma$ has side length $\epsilon$. Furthermore, there is a constant $C_1>0$, only depending on the data of $X$, and Lipschitz maps $f\colon X \to \Sigma $ and $g\colon \Sigma \to l^\infty$ satisfying $d(x,g(f(x))) \leq C_1\epsilon$ for all $x \in X$, and $f$ is $C_1$-Lipschitz and $g$ is $C_1$-Lipschitz on every simplex $\Delta$ in $\Sigma$. By first using Proposition \ref{prop: weak-perturb-simplicial} to obtain a suitable approximation of $f$ and then applying Proposition \ref{prop: deformation-projection-no-rect} to this approximation, we conclude that there exists a Lipschitz map $h \colon X \to \Sigma$ satisfying (3)-(5) and (6) with a constant $C_2>0$. Notice that $C_2>0$ depends only on $C_1,N$ and $n$. We claim that $h$ also satisfies (2). Indeed, let $x \in X$. Since $g$ is $C_1$-Lipschitz on every simplex and satisfies (2), we have
$$d(x,g(h(x))) \leq d(x,g(f(x))) + d(g(f(x)),g(h(x)))\leq C_1 \epsilon + C_1 \diam \Delta = C_3 \epsilon.$$
The statement follows for $C = \max\{C_1,C_2,C_3\}$.
\end{proof}

Next, we construct a Lipschitz approximation of $\eta = h \circ \varrho^{-1}$ that also satisfies (6). We use a similar construction as in \cite[Section 5]{Meier-Wenger}, see also \cite{Brian-white-least-volume}. We remark that this is the only place where we need the local index bounds.
\begin{lemma}\label{lemma: good-lip-approx-into-simplicial-complex}
    There exists a Lipschitz map $\psi\colon M \to \Sigma$ with image contained in the $n$-skeleton $\Sigma^n$ of $\Sigma$ and such that $\psi(\varrho^{-1}(x))$ and $h(x)$ are contained in simplices with a common side for each $x\in X$. Moreover,
   \begin{equation}\label{eq: good-lip-approx-into-simplicial-complex}
       \int_\Delta N(\psi,y) \; d\Ha^n(y) \leq 2CD\cdot \Ha^n(f^{-1}(\st\Delta))
   \end{equation}
   for every $\Delta \in  \Sigma^n$.
\end{lemma}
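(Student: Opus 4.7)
The strategy is to approximate the continuous map $\eta := h \circ \varrho^{-1}\colon M \to \Sigma^n$ by a Lipschitz map $\psi$ of minimal multiplicity within its homotopy class, and to bound that multiplicity using the local index hypothesis on $h$ (interpreting the condition in (2) of the lemma as comparing $\psi(\varrho(x))$ with $h(x)$ for $x \in X$, or equivalently $\psi(y)$ with $h(\varrho^{-1}(y))$ for $y \in M$).

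As a first step, Lemma \ref{lemma: lip-approx-easy} yields Lipschitz approximations of $\eta$ with arbitrarily small uniform distance, since finite simplicial complexes in the $l^2$-metric are absolute Lipschitz neighborhood retracts. Post-composing with the deformation projection of Proposition \ref{prop: deformation-projection-no-rect} applied to the smooth manifold $M$ (whose purely $n$-unrectifiable part is empty) produces a Lipschitz $\psi_0\colon M\to \Sigma^n$ close to $\eta$. For $d(\psi_0,\eta)$ sufficiently small relative to the simplex diameter, $\psi_0(\varrho(x))$ and $h(x)=\eta(\varrho(x))$ lie in simplices sharing a common face for every $x\in X$, establishing the combinatorial compatibility required in the conclusion.

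To obtain the integral bound, I would follow the strategy of \cite{Brian-white-least-volume} and \cite[Section 5]{Meier-Wenger} and pass to an area-minimizing representative: minimize $\int_{\Sigma^n}N(\psi,y)\,d\Ha^n(y)$ over Lipschitz maps $\psi\colon M\to\Sigma^n$ with $d(\psi,\psi_0)$ small and a uniform Lipschitz bound. A minimizer $\psi$ exists by Arzelà--Ascoli together with lower semicontinuity of the multiplicity integral (via the mass of the pushforward current $\psi_\#\bb M$). A local surgery argument --- collapsing a pair of preimages of opposite local index through a small homotopy strictly decreases the integral --- then shows
\[
N(\psi,y)=|\mu(\psi,M,y)| \qquad \text{for a.e. } y\in \Sigma^n\setminus\Sigma^{n-1}.
\]
Establishing this no-cancellation property at the minimum is the main technical obstacle.

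It remains to bound $|\mu(\psi,M,y)|$. Since $\psi$ and $\eta$ are homotopic through a short homotopy and $\partial M=\varnothing$, Lemma \ref{lemma: basic-properties-degree}(3) gives $\mu(\psi,M,y)=\mu(\eta,M,y)$, and this integer is constant on each open $n$-simplex $\inte\Delta$; call it $m_\Delta$. For almost every $y\in \inte\Delta\setminus h(P)$, $\eta^{-1}(y)=\varrho(h^{-1}(y))\subset\varrho(E)$; since $\varrho$ is an orientation-preserving homeomorphism, $|\iota(\eta,\varrho(x))|=|\iota(h,x)|$. Applying Lemma \ref{lemma: equiv-def-degree-bound} to $h$ regarded locally as a Lipschitz map into $\R^n$ yields $|\iota(h,x)|\le D$ for almost every $x\in E$, so $|m_\Delta|\le D\cdot N(h,y)$ for a.e.\ $y\in\inte\Delta\setminus h(P)$. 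Integrating over this set and invoking property (6) of Lemma \ref{lemma: lip-mfld-approx-setup} gives
\[
|m_\Delta|\cdot\Ha^n(\inte\Delta\setminus h(P)) \le D\int_\Delta N(h,y)\,d\Ha^n(y) \le CD\cdot\Ha^n(f^{-1}(\st\Delta)),
\]
and the loss on $\inte\Delta\cap h(P)$ is absorbed using $\Ha^n(h(P))<\epsilon^n/(2n!)$ from property (5) of Lemma \ref{lemma: lip-mfld-approx-setup} (together with the uniform lower bound on the volume of a regular $n$-simplex of side $\epsilon$), producing the factor $2$ in $2CD$.
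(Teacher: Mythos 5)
Your proposal takes a genuinely different route from the paper, and it contains a real gap that you yourself flag. The paper never minimizes anything: for each $n$-simplex $\Delta_i$ it \emph{selects} a good point $z_i\in\inte\Delta_i$ where, by a Chebyshev-type estimate using properties (5) and (6) of Lemma~\ref{lemma: lip-mfld-approx-setup}, one simultaneously has $h^{-1}(z_i)\subset E$, $N(h,z_i)\,\Ha^n(\Delta_i)\le 2\int_{\Delta_i}N(h,\cdot)\,d\Ha^n$, and $|\iota(h,x)|\le D$ for each $x\in h^{-1}(z_i)$. It then performs explicit surgery on $\eta=h\circ\varrho^{-1}$ in small disjoint balls $B_{i,j}$ around the finitely many preimages of $z_i$: outside the balls $\psi$ is $\eta$ followed by the radial projection centered at $z_i$ (hence lands in $\Sigma^{n-1}$), and inside $\frac12 B_{i,j}$ it is replaced by a Lipschitz map of constant multiplicity $|D_{i,j}|=|\iota(\eta,x_{i,j})|\le D$ built from a Hopf map of degree $D_{i,j}$. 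This gives \emph{by construction} $\int_{\Delta_i}N(\psi,\cdot)\,d\Ha^n = \sum_j|D_{i,j}|\cdot\Ha^n(\Delta_i)\le D\,N(\eta,z_i)\,\Ha^n(\Delta_i)$, and the factor $2$ comes from the choice of $z_i$, not from property~(5) as in your argument.

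The variational route you propose is not completed. First, lower semicontinuity of $\psi\mapsto\int N(\psi,\cdot)\,d\Ha^n$ under uniform convergence with a Lipschitz bound is not what the current-theoretic machinery gives; what is lower semicontinuous is the mass $\mathbf M(\psi_\#\bb M)=\int|\mu(\psi,\cdot,\cdot)|\,d\Ha^n$, which only bounds $\int N(\psi,\cdot)$ from \emph{below}, so even the existence of a minimizer of your functional is unclear. Second, and this is the issue you flag, the no-cancellation statement $N(\psi,y)\le|\mu(\psi,\cdot,y)|$ for a minimizer is a substantial theorem in the smooth setting (Brian White) and has no obvious Lipschitz analogue for targets that are simplicial complexes; ``collapsing a pair of opposite-index preimages by a small homotopy'' is precisely the nontrivial content, and sketching it does not close the gap. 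Third, the inequality should be $N(\psi,y)\le|\mu|$ rather than equality (a single preimage of index $2$ has $N=1<2=|\mu|$), though this does not harm your direction of estimate. The paper's construction sidesteps all three issues: by building $\psi$ explicitly around carefully chosen regular values and controlling the multiplicity simplex by simplex, it never needs a minimizing property or a no-cancellation theorem.
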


\begin{proof}
    Let $\Delta \in \Sigma^n$. It follows from (3) and (5) of Lemma \ref{lemma: lip-mfld-approx-setup} that there exists a Borel set $A \subset \Delta$ with $2\Ha^n(A) > \Ha^n(\Delta) =\epsilon^n/n!$ and such that for each $z \in A$ we have $h^{-1}(z) \subset E$. Since $X$ satisfies Theorem \ref{thme: main}(2) and $h$ is Lipschitz, Lemma \ref{lemma: equiv-def-degree-bound} implies
    $$\Ha^n\left(\{ z \in A \colon \#h^{-1}(z) < \infty \textup{ and } |\iota(h,x)|\le D \textup{ for each } x\in h\inv(z) \}\right) > \frac{1}{2} \Ha^n(\Delta).$$
    Moreover, 
    $$\Ha^n\left(\left\{ z \in \Delta \colon  \Ha^n(\Delta) \cdot N(h,z) \leq 2\int_{\Delta} N(h, z) \; d\Ha^n(z) \right\}\right) \geq \frac{1}{2}\Ha^n(\Delta).$$
    Therefore, by (6) of Lemma \ref{lemma: lip-mfld-approx-setup} we conclude that for every simplex $\Delta_i \in \Sigma^n$ there exists a point $z_i \in \inte \Delta$ satisfying
    \begin{equation}\label{eq: good-lip-approx-into-simplicial-complex-bound-mult-points}
        \Ha^n(\Delta_i) \cdot  N(h,z_i) \leq 2\int_{\Delta_i} N(h, z) \; d\Ha^n(z) \leq 2C \cdot \Ha^n(f^{-1}(\st(\Delta_i)))
    \end{equation}
    and for each $y \in h^{-1}(z_i)$ the local index of $h$ at $y$ is well defined with  $|\iota(h,y)|\leq D$. Let $\eta = h \circ \varrho^{-1}$. We have $\iota(h,y) = \iota(\eta,\varrho(y))$ and $N(h,z_i)=N(\eta,z_i)$ for all $z_i$ and every $y \in h^{-1}(z_i)$ because $\varrho\colon X \to M$ is a homeomorphism. If $N(\eta,z_i) = 0$, then $z_i\notin \im \eta$ and we define $\psi$ on $\eta^{-1}(\Delta_i)$ as the composition of $\eta$ with the radial projection with $z_i$ as center. Otherwise, write $\eta^{-1}(z_i) = \{x_{i,1},\dots,x_{i,m_i}\}$, where $m_i = N(\eta,z_i)$. For $x_{i,j}$, let $B_{i,j}$ be an open set in $\eta^{-1}(\textup{int}(\Delta_i))$ that is the bi-Lipschitz homeomorphic image of the Euclidean ball $B^n(0,1)$ and such that the family $\left\{B_{i,j}\right\}_{i,j}$ is pairwise disjoint. We moreover denote by $\frac{1}{2}B_{i,j}$ the image of $B^n(0,1/2)$ under the same bi-Lipschitz homeomorphism. We define $\psi$ outside $B_{i,j}$ as the composition of $\eta$ and the radial projection with the $z_i$ as center points. Denote $D_{i,j}:=\iota(\eta,x_{i,j})$. If $D_{i,j}= 0$, there exists a continuous extension of $\psi$ to $B_{i,j}$ with image inside $\partial \Delta_i$. If   $D_{i,j}\neq 0$, let $\psi_{B_{i,j}\setminus \frac{1}{2}B_{i,j}}$ be a continuous homotopy inside $\partial \Delta_i$ between $\psi|_{\partial B_{i,j}}$ and a standard Hopf map $H_{i,j}:\partial \frac 12 B_{i,j}\to \partial \Delta_i$ of degree $D_{i,j}$ (which is Lipschitz). Finally, let $\psi_{\frac{1}{2}B_{i,j}}:\frac{1}{2}B_{i,j}\to \Delta_i$ be a Lipschitz extension of $H_{i,j}$ for which every point has exactly $|D_{i,j}|$ preimages. It follows that the map $\psi\colon M \to \Sigma$ is continuous with image in the $n$-skeleton of $\Sigma$. Furthermore, the restriction $\psi_{\frac{1}{2}B_{i,j}}$ is Lipschitz for all $(i,j)$ and $\psi(\Omega) \subset \Sigma^{n-1}$, where $\Omega = M \setminus \bigcup_{i,j} \frac 12B_{i,j}$. The $(n-1)$-skeleton of $\Sigma$ is locally Lipschitz $k$-connected for all $k\geq 0$ and $\psi$ restricted to $\partial \Omega$ is Lipschitz. Therefore, we can replace $\psi$ on $\Omega$ by a Lipschitz map approximation arbitrarily close to the original map and which agrees with $\psi$ on $\partial \Omega$. Thus we can guarantee that $\psi(\varrho^{-1}(x))$ and $h(x)$ are contained in simplices with a common side for each $x\in X$. Since $|D_{i,j}|\leq D$ for every $(i,j)$ and by the definition of $\psi$ we have
    $$\int_{\Delta_i} N(\psi,y) \; d\Ha^n = \int_{\Delta_i} \sum_{j=1}^{m_i} |D_{i,j}| \; d\Ha^n \leq  D \cdot N(\eta,z_i) \cdot \Ha^n(\Delta_i).$$
    for every $\Delta_i$. Therefore, \eqref{eq: good-lip-approx-into-simplicial-complex-bound-mult-points} implies 
    $$\int_{\Delta_i} N(\psi,y) \; d\Ha^n \leq 2CD \cdot \Ha^n(f^{-1}(\st(\Delta_i)))$$ 
    for every $\Delta_i$. This completes the proof.
\end{proof}

We are now in a position to construct the Lipschitz manifold $Y$ as in the statement of Theorem \ref{thm: lip-mfld-approx-only-degree}. Let $\psi\colon M \to \Sigma$ be the Lipschitz map obtained by the previous lemma. We define
    \begin{align*}
        \Psi\colon &M \to M\times l^\infty,
        \\
        &x \mapsto (x,g(\psi(x))).
    \end{align*}
    Set $Y = \Psi(M)$ and equip $Y$ with the metric defined as follows $$d_Y((x,v),(y,w)) = \epsilon d_M(x,y) + d_{l^\infty}(v,w).$$ Since $\psi$ and $g$ are Lipschitz, the map $\Psi\colon M \to Y$ is a bi-Lipschitz homeomorphism.  Denote by $\varphi := \Psi \circ \varrho^{-1}$ the homeomorphism $M\to Y$ obtained as the composition of $\Psi$ and $\varrho\inv$.

\begin{lemma}\label{lemma: lip-mfld-approx-rough-isom}
    There exists $c_1>0$ only depending on $C>0$ and $n$ such that 
    $$|d(x,y) - d(\varphi(x),\varphi(y))|\leq (\diam M +  c_1)\epsilon$$
    for every $x,y \in X$.
\end{lemma}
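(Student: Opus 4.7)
The plan is to reduce the estimate to a single pointwise bound: that the composition $g\circ\psi\circ\varrho\colon X\to l^\infty$ is $O(\epsilon)$-close to the identity inclusion $X\subset l^\infty$. Once that is in place, unpacking the definition of $d_Y$ shows immediately that the $l^\infty$-component of $d_Y(\varphi(x),\varphi(y))$ recovers $d(x,y)$ up to $O(\epsilon)$, while the $M$-component contributes at most $\epsilon\diam M$.

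First I would prove the pointwise estimate: there exists $c_2>0$, depending only on $C$, such that
\[
d(z,g(\psi(\varrho(z))))\le c_2\epsilon\qquad \text{for every } z\in X.
\]
By Lemma~\ref{lemma: lip-mfld-approx-setup}(2) we already have $d(z,g(h(z)))\le C\epsilon$, so it suffices to bound $d(g(h(z)),g(\psi(\varrho(z))))$. By Lemma~\ref{lemma: good-lip-approx-into-simplicial-complex}, the points $h(z)$ and $\psi(\varrho(z))$ lie in two simplices $\Delta,\Delta'$ that share a common face, so we can choose $w\in\Delta\cap\Delta'$. Since each simplex of $\Sigma$ has side length $\epsilon$, hence $l^2$-diameter $\sqrt{2}\,\epsilon$, we get $|h(z)-w|_{l^2}\le\sqrt 2\,\epsilon$ and $|w-\psi(\varrho(z))|_{l^2}\le\sqrt 2\,\epsilon$. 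Applying the simplexwise $C$-Lipschitz property of $g$ from Lemma~\ref{lemma: lip-mfld-approx-setup}(1) separately on $\Delta$ and $\Delta'$ and summing through $w$ gives $d(g(h(z)),g(\psi(\varrho(z))))\le 2\sqrt 2\,C\epsilon$, whence the claim with $c_2=(1+2\sqrt 2)C$.

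Second I would plug this into the definition of $d_Y$. For $x,y\in X$,
\[
d_Y(\varphi(x),\varphi(y))=\epsilon\,d_M(\varrho(x),\varrho(y))+d_{l^\infty}(g(\psi(\varrho(x))),g(\psi(\varrho(y)))).
\]
Two applications of the triangle inequality in $l^\infty$, combined with the pointwise bound from the first step, yield
\[
\bigl|d(x,y)-d_{l^\infty}(g(\psi(\varrho(x))),g(\psi(\varrho(y))))\bigr|\le 2c_2\epsilon,
\]
while the first summand lies in $[0,\epsilon\diam M]$. Putting these together gives
\[
|d(x,y)-d_Y(\varphi(x),\varphi(y))|\le (\diam M+2c_2)\epsilon,
\]
so $c_1=2c_2=2(1+2\sqrt{2})C$ works.

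There is no substantive obstacle here; the result is a careful bookkeeping application of Lemmas~\ref{lemma: lip-mfld-approx-setup} and~\ref{lemma: good-lip-approx-into-simplicial-complex}. The only mild subtlety is that $g$ is only piecewise (simplexwise) Lipschitz on $\Sigma$, which forces one to route the comparison between $g(h(z))$ and $g(\psi(\varrho(z)))$ through a point of the shared face $\Delta\cap\Delta'$; this is precisely what the adjacency clause in Lemma~\ref{lemma: good-lip-approx-into-simplicial-complex} was set up to allow.
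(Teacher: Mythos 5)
Your proof is correct and follows essentially the same route as the paper's: bound the gap between $g(h(z))$ and $g(\psi(\varrho(z)))$ by passing through a point of the common face of the two adjacent simplices and using the simplexwise $C$-Lipschitz property of $g$, then assemble the full estimate by the triangle inequality together with $d(z,g(h(z)))\le C\epsilon$. The only difference is cosmetic and slightly to your advantage: you bound the detour through the shared face directly from the simplex diameter $\sqrt{2}\,\epsilon$, whereas the paper invokes Lemma~\ref{lemma: close-points-neighb-simplicies} (which is overkill here since only an upper bound, not a comparability estimate, is needed) and consequently records a larger constant.
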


\begin{proof}
    Let $x \in X$. By (3) of Lemma \ref{lemma: lip-mfld-approx-setup} there exist $\Delta_1 \in \Sigma^n$ and $\Delta_2 \in  \mathcal{F}(\Sigma)$ with $\Delta_1\cap \Delta_2 \neq \emptyset$ and such that $h(x) \in \Delta_1$ and $(\psi \circ \varrho^{-1})(x) \in \Delta_2$, respectively. It follows from Lemma \ref{lemma: close-points-neighb-simplicies} that there exists $z \in \Delta_1\cap \Delta_2$ satisfying
    $$|(\psi \circ \varrho^{-1})(x)-z|_{l^2}+|z-h(x)|_{l^2} \leq 4 \sqrt{n} |(\psi \circ \varrho^{-1})(x)-h(x)|_{l^2}.$$
    Recall that each simplex in $\Sigma$ has side-length $\epsilon$. Therefore, using that $g$ is $C$-Lipschitz on each simplex, we get
    $$d(g(h(x)),g(\psi(\varrho^{-1}(x)))) \leq 8 \sqrt{n} C\epsilon.$$
    In particular, for each $x,y \in X$ we have    
    \begin{align*}
        &|d_X(x,y)-d_Y(\varphi(x),\varphi(y))|
        \\
        \leq &\epsilon \diam M+ |d_X(x,y)-d_{l^\infty}(g(\psi(\varrho^{-1}(x))),g(\psi(\varrho^{-1}(y)))|
        \\
        \leq &\epsilon (16 \sqrt{n}C + \diam(M)) + |d_X(x,y)-d_{l^\infty}(g(h(x)),g(h(y)))|
        \\
        \leq &(\diam(M)+18 \sqrt{n}C)\epsilon = (\diam M +  c_1)\epsilon.
    \end{align*}
\end{proof}

We now turn our attention to the second property.

\begin{lemma}\label{lemma: lip-mfld-approx-volume-bound}
    There exists $c_2>0$ only depending on $C,D, N$ and $n$ such that
    $$\Ha^n(\varphi(K)) \leq \int_{\varrho^{-1}(K)} \mathbf{J}(D_z \Psi) \; d\Ha^n(z) \leq \epsilon^n \Ha^n(M) + c_2 \Ha^n(N_{c_2\epsilon }(K))$$
    for every compact $K \subset X$.
\end{lemma}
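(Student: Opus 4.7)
The plan is to establish the first inequality by Kirchheim's area formula, and the second by a pointwise Jacobian bound combined with the multiplicity estimate from Lemma~\ref{lemma: good-lip-approx-into-simplicial-complex}. Since $\Psi\colon M\to Y$ is Lipschitz and $\varphi(K)=\Psi(\varrho^{-1}(K))$, the first inequality is immediate from the metric area formula.

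For the second inequality, I would first compute the metric differential of $\Psi$. Since $\Psi(z)=(z,g(\psi(z)))$ and $d_Y=\epsilon d_M+d_{l^\infty}$, at almost every $z\in M$ one has
\[
\md_\Psi(z)(u) = \epsilon\,|u|_{T_zM} + \md_{g\circ\psi}(z)(u),
\]
and hence $\mathbf{J}(D_z\Psi)\le (\epsilon+\Lip(g\circ\psi)(z))^n\le 2^{n-1}\epsilon^n+2^{n-1}\Lip(g\circ\psi)(z)^n$. Integrating the first summand over $\varrho^{-1}(K)\subset M$ contributes at most $2^{n-1}\epsilon^n\Ha^n(M)$, which is absorbed into the stated $\epsilon^n\Ha^n(M)$ term (with the dimensional factor folded into $c_2$).

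To handle the $\Lip(g\circ\psi)^n$ term, I would use that $\psi$ maps $M$ to the $n$-skeleton $\Sigma^n$ and that $g$ is $C$-Lipschitz on each simplex: at a.e.\ $z$ we obtain $\Lip(g\circ\psi)(z)\le C'\Lip(\psi)(z)$ with $C'=C'(C,n,N)$, using Lemma~\ref{lemma: close-points-neighb-simplicies} to handle points $z$ where $\psi(z)$ lies on a shared face. The area formula applied to $\psi$ then gives
\[
\int_{\varrho^{-1}(K)}\Lip(g\circ\psi)^n\,d\Ha^n \le C'^{\,n}\sum_{\Delta\in\mathcal{S}_K}\int_\Delta N(\psi,w,\varrho^{-1}(K))\,d\Ha^n(w),
\]
where $\mathcal{S}_K=\{\Delta\in\Sigma^n:\Delta\cap\psi(\varrho^{-1}(K))\neq\varnothing\}$ is the set of $n$-simplices hit by $\psi(\varrho^{-1}(K))$. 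Applying \eqref{eq: good-lip-approx-into-simplicial-complex} on each such $\Delta$ bounds the right-hand side by $\sum_{\Delta\in\mathcal{S}_K} 2CD\cdot\Ha^n(f^{-1}(\st\Delta))$.

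The final step is to show $f^{-1}(\st\Delta)\subset N_{c_0\epsilon}(K)$ for every $\Delta\in\mathcal{S}_K$. Given such a $\Delta$, pick $z\in K$ with $\psi(\varrho^{-1}(z))\in\Delta$; by the common-face property in Lemma~\ref{lemma: good-lip-approx-into-simplicial-complex} combined with property (4) of Lemma~\ref{lemma: lip-mfld-approx-setup}, $f(z)$ lies in a simplex sharing a side with $\Delta$, so $f(z)\in\st\Delta$. For any $x\in f^{-1}(\st\Delta)$, the points $f(x)$ and $f(z)$ both lie in $\st\Delta$; using Lemma~\ref{lemma: close-points-neighb-simplicies}, the $C$-Lipschitz bound on $g$ restricted to each simplex, and $d(\cdot,g\circ f(\cdot))\le C\epsilon$, one obtains $d(x,z)\le c_0\epsilon$ with $c_0=c_0(C,n,N)$. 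Since the star cover $\{\st\Delta\}_{\Delta\in\Sigma^n}$ has combinatorial multiplicity bounded by $m(N)$, summing over $\Delta\in\mathcal{S}_K$ gives the second inequality with a constant $c_2$ depending only on $C,D,N,n$. The main obstacle will be the first step, namely writing down the pointwise Jacobian estimate that cleanly separates the pure $\epsilon^n$ contribution from the rest so that the remainder can be routed through the area formula for $\psi$ and controlled by \eqref{eq: good-lip-approx-into-simplicial-complex}; the later steps are combinatorial bookkeeping of star neighborhoods in $\Sigma$.
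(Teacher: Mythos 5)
Your overall route is the same as the paper's---area formula for the first inequality, a pointwise Jacobian estimate for $\Psi$, then the area formula for $\psi$ together with the multiplicity bound \eqref{eq: good-lip-approx-into-simplicial-complex}, and star-neighborhood bookkeeping---but there is a genuine gap in the central step. You bound $\mathbf{J}(D_z\Psi)\le(\epsilon+\Lip(g\circ\psi)(z))^n$ (which is fine) and then assert that ``the area formula applied to $\psi$'' gives
\[
\int_{\varrho^{-1}(K)}\Lip(g\circ\psi)^n\,d\Ha^n\le C'^{\,n}\sum_{\Delta}\int_\Delta N(\psi,w,\varrho^{-1}(K))\,d\Ha^n(w).
\]
This inequality is false. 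Kirchheim's area formula states $\int_A\mathbf{J}(\md_\psi(z))\,d\Ha^n(z)=\int N(\psi,w,A)\,d\Ha^n(w)$, and while $\mathbf{J}(\md_\psi(z))\le\Lip(\psi)(z)^n$ always holds, the reverse inequality does not: whenever $\md_\psi(z)$ is anisotropic or degenerate (for instance, $\psi$ stretches strongly in one direction while collapsing another), $\mathbf{J}(\md_\psi(z))$ is much smaller than $\Lip(\psi)(z)^n$ or identically zero. Here $\psi$ is only globally Lipschitz with an uncontrolled constant, and a positive-measure portion of $M$ is mapped into $\Sigma^{n-1}$ where $\mathbf{J}(\md_\psi)=0$ but $\Lip(\psi)$ need not vanish; so $\int\Lip(g\circ\psi)^n$ has no bound in terms of $N(\psi,\cdot)$.

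The paper circumvents this by never passing through the pointwise Lipschitz constant: it claims the pointwise estimate $\mathbf{J}(D_z\Psi)\le\epsilon^n+C^n\mathbf{J}(D_z\psi)$ (dimensional constants absorbed into $c_2$), which makes the subsequent application of the area formula to $\psi$ legitimate since the remainder term is already a Jacobian. To repair your argument you would need to replace $\Lip(g\circ\psi)(z)^n$ by $\mathbf{J}(\md_{g\circ\psi}(z))\le C^n\mathbf{J}(\md_\psi(z))$ in the pointwise estimate, and prove a splitting of $\mathbf{J}(D_z\Psi)$ into the $\epsilon^n$ contribution and a term proportional to $\mathbf{J}(\md_\psi(z))$. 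Your combinatorial steps---the bounded star-multiplicity count in $\Sigma$ and the containment $f^{-1}\bigl(\bigcup_{\Delta\in\mathcal S_K}\st\Delta\bigr)\subset N_{c_0\epsilon}(K)$ via Lemma \ref{lemma: close-points-neighb-simplicies} and the common-face property---match the paper's proof and are correct as written.
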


\begin{proof}
    Let $K \subset X$ be compact. It follows from the area formula that
    \begin{align*}
        \Ha^n(\varphi(K)) &= \Ha^n(\Psi(\varrho^{-1}(K)))
        \\
        &= \int_Y N(\Psi, y, \varrho^{-1}(K)) \; d\Ha^n(y) 
        \\
        &= \int_{\varrho^{-1}(K)} \textbf{J}(D_x \Psi) \; d\Ha^n(x).
    \end{align*}
    It is not difficult to show that $\textbf{J}(D_x \Psi) \leq \epsilon^n + C^n\textbf{J}(D_x \psi)$ for almost all $x \in M$. Furthermore, the area formula and \eqref{eq: good-lip-approx-into-simplicial-complex} yield
    \begin{align*}
        \int_{\varrho^{-1}(K)} \textbf{J}(D_x \psi) \; d\Ha^n(x) &=\int_\Sigma N(\psi, y, \varrho^{-1}(K))\; d\Ha^n(y)
         \\
         &= \sum_{\Delta \in \mathcal{S}} \int_\Delta N(\psi, y, \varrho^{-1}(K))\; d\Ha^n(y)
         \\
         &\leq  2CD \sum_{\Delta \in \mathcal{S}}  \Ha^n(f^{-1}(\st\Delta)).
    \end{align*}
    Here, $\mathcal{S}$ denotes the set of all $n$-dimensional simplices intersecting $\psi(\varrho^{-1}(K))$. We claim that
    $$\sum_{\Delta \in \mathcal{S}}  \Ha^n(f^{-1}(\st\Delta)) \leq c \cdot \Ha^n(N_{c \epsilon}(K))$$
    where $ c$ only depends on $C, N$ and $n$. Indeed, for a simplex $\sigma$ in $\Sigma$ the number of distinct $n$-dimensional simplices $\Delta \in \Sigma^n$ satisfying $\sigma \subset \st \Delta$ is bounded by a constant $c(N,n)$ that depends only on $N$ and $n$. Therefore,
    $$\sum_{\Delta \in \mathcal{S}}  \Ha^n(f^{-1}(\st\Delta)) \leq c(N,n) \cdot\Ha^n\left(f^{-1}\left(\bigcup_{\Delta \in \mathcal{S}}\st\Delta\right)\right).$$
    Let $x \in f^{-1}\left(\bigcup_{\Delta \in \mathcal{S}}\st\Delta\right)$. It follows from the definition of $\mathcal{S}$ that there exist $\Delta\in \mathcal{S}$ and $y \in K$ such that $f(x), \psi(\varrho^{-1}(y)) \in \Delta$. Recall that $f(y)$ and $\psi(\varrho^{-1}(y))$ are contained in simplices with a common side. Thus, there exists a simplex $\Delta' \subset \Sigma$ with $\Delta \cap \Delta' \neq \emptyset$ and $f(y) \in \Delta'$. By Lemma \ref{lemma: close-points-neighb-simplicies} there exists $z \in \Delta \cap \Delta'$ satisfying 
    $$|(\psi \circ \varrho^{-1})(y)-z|_{l^2}+|z-f(y)|_{l^2} \leq 4 \sqrt{N} |(\psi \circ \varrho^{-1})(y)-f(y)|_{l^2}.$$
    Since $g$ is $C$-Lipschitz on each simplex and $d(x,g(f(x))) \leq C \epsilon$ for every $x \in X$, we conclude
    \begin{align*}
        d(x,y) & \leq d(x,g(f(x))) + d(g(f(x)),g(f(y))) + d(g(f(y)),y)
        \\
        &\leq 2C\epsilon + d(g(f(x)),g(\psi(\varrho^{-1}(y)))) + d(g(\psi(\varrho^{-1}(y))),g(f(y)))
        \\
        &\leq 10 C \sqrt{N}\epsilon.
    \end{align*}
    In particular, $f^{-1}\left(\bigcup_{\Delta \in \mathcal{S}}\st\Delta\right) \subset N_{10 C \sqrt{N}\epsilon}(K)$. Putting everything together
    $$\Ha^n(\varphi(K)) \leq \int_{\varrho^{-1}(K)} \mathbf{J}(D_z \Psi) \; d\Ha^n(z) \leq \epsilon^n \Ha^n(M) + c_2 \Ha^n(N_{c_2\epsilon }(K)),$$
    where $c_2 = \max\{2c(N,n)C^{n+1}D,10 C \sqrt{N}\}$.
\end{proof}

\begin{proof}[Proof of Theorem \ref{thm: lip-mfld-approx-only-degree}]
Combining Lemma \ref{lemma: lip-mfld-approx-rough-isom} and \ref{lemma: lip-mfld-approx-volume-bound}
we conclude that there exist a metric space $Y$ that is bi-Lipschitz to $M$ and a homeomorphism $\varphi\colon X \to Y$ satisfying 
$$|d(x,y) - d(\varphi(x),\varphi(y))| \leq (\diam M +  C)\epsilon$$
for all $x,y \in X$ and  for every compact $K \subset X$
$$\Ha^n(\varphi(K)) \leq \epsilon^n \Ha^n(M) + C \Ha^n(N_{C\epsilon }(K)),$$ 
where the constant $C$ depends only on $D$ and the data of $X$. Since $\epsilon>0$ was arbitrary this completes the proof of Theorem \ref{thm: lip-mfld-approx-only-degree}.
\end{proof}

\section{Construction of a metric fundamental class}\label{sec:approx-lip-mfld-to-fund-class}
Finally, we prove that if there exists a sequence of metric spaces $X_k$ approximating $X$ in the Gromov-Hausdorff distance with volume control, then $X$ has a metric fundamental class. Together with Theorem \ref{thm: fund-current-implies-degree-bound} and \ref{thm: lip-mfld-approx-only-degree} this completes the proof of Theorem \ref{thme: main}. The following theorem yields the implication (3)$\implies$(1), however its statement is more general. In particular, the approximating manifolds need not belong to a single homeomorphism class.

\begin{theorem}\label{thm: lip-mfld-approx-implies-fund-current}
Let $X_k$ be closed, oriented, metric $n$-manifolds admitting a metric fundamental class with the constant in Definition \ref{def: metric fundamental class}(a) independent of $k$. Suppose that $\psi_k:X_k\to X$ are continuous $\epsilon_k$-isometries, $\epsilon_k\to 0$, with $\deg(\psi_k)\ne 0$,  where $X$ is a closed, oriented, metric $n$-manifold with $\Ha^n(X)<\infty$, such that
\begin{align}\label{eq:vol-control}
\limsup_{k\to\infty}\Ha^n(\psi_k\inv(K))\le C\Ha^n(K),\quad K\subset X\textrm{ compact}
\end{align}
for some $C>0$. Then $X$ has a metric fundamental class.
\end{theorem}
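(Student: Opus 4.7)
The strategy is to push forward the metric fundamental classes $T_k\in \bI_n(X_k)$ by Lipschitz approximations of the $\psi_k$, pass to a weak limit via Ambrosio--Kirchheim compactness, and verify that the limit has, up to a constant integer multiple, the defining properties of a metric fundamental class on $X$, appealing to Remark \ref{rmk:multipl}. To that end, embed $X$ isometrically into $\ell^\infty$. For each $k$, choose a maximal $\epsilon_k$-separated net $Y_k\subset X_k$: then $\psi_k|_{Y_k}$ is $2$-Lipschitz, since $d(\psi_k(x),\psi_k(y))\le d(x,y)+\epsilon_k\le 2 d(x,y)$ for distinct $x,y\in Y_k$. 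By injectivity of $\ell^\infty$, extend to a $2$-Lipschitz map $\eta_k\colon X_k\to \ell^\infty$, observing $d(\eta_k,\psi_k)\le 4\epsilon_k$. Set $T_k^{\ast}:=\eta_{k\#}T_k\in \bI_n(\ell^\infty)$. Then $\partial T_k^{\ast}=0$, and by Lemma \ref{lemma: integer-currents-pushfwrd-lip-constant} combined with the uniform bound $\|T_k\|\le C_0\Ha^n$ and the volume control \eqref{eq:vol-control} applied to $X_k=\psi_k^{-1}(X)$, the masses $\mass(T_k^{\ast})$ are uniformly bounded. Since $\spt T_k^{\ast}$ lies in the $4\epsilon_k$-neighborhood of $X$ in $\ell^\infty$, the Ambrosio--Kirchheim compactness theorem together with the closure theorem (\cite[Theorems 5.2 and 8.5]{ambrosio-kirchheim-2000}) yield a subsequence with $T_k^{\ast}\rightharpoonup T$ for some $T\in \bI_n(X)$ satisfying $\partial T=0$.

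To identify $T$, fix via Lemma \ref{lemma: lip-approx-easy} a Lipschitz map $\varphi_0\colon X\to M$ of degree one, and extend $\varphi_0$ Lipschitz-ly to a neighborhood of $X$ in $\ell^\infty$ using that $M$ is an absolute Lipschitz neighborhood retract. For large $k$, $\varphi_0\circ\eta_k\colon X_k\to M$ is well-defined and uniformly close to $\varphi_0\circ\psi_k$, hence homotopic to it, so $\deg(\varphi_0\circ \eta_k)=\deg(\psi_k)=:d_k$. Property (b) of the metric fundamental class $T_k$ (identifying the resulting integral cycle in $M$ via the constancy principle) gives $(\varphi_0\circ \eta_k)_{\#}T_k=d_k\cdot \bb M$, and comparing masses with the uniform bound on $\mass(T_k^{\ast})$ shows that $|d_k|$ is uniformly bounded. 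Being nonzero integers, the $d_k$ admit a constant subsequence with $d_k=d$ for a fixed $d\in\Z\setminus\{0\}$. For this subsequence and any Lipschitz $\varphi\colon X\to M$, the same homotopy argument gives $(\varphi\circ \eta_k)_{\#}T_k=d\cdot \deg(\varphi)\cdot \bb M$, and passing to the weak limit yields
\[
\varphi_\# T=d\cdot\deg(\varphi)\cdot \bb M\qquad\text{for every Lipschitz }\varphi\colon X\to M.
\]

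Finally we establish the mass bound: for compact $K\subset X$ and small $\delta>0$, the inclusion $\eta_k^{-1}(N_\delta^{\ell^\infty}(K))\subset \psi_k^{-1}(\overline{N_{2\delta}^X(K)})$ (valid for $k$ large) and Lemma \ref{lemma: integer-currents-pushfwrd-lip-constant} give $\|T_k^{\ast}\|(N_\delta^{\ell^\infty}(K))\le 2^n C_0\Ha^n(\psi_k^{-1}(\overline{N_{2\delta}^X(K)}))$. Lower semicontinuity of mass under weak convergence combined with \eqref{eq:vol-control} implies $\|T\|(N_\delta^{\ell^\infty}(K))\le 2^n C_0 C\Ha^n(\overline{N_{2\delta}^X(K)})$; letting $\delta\to 0$ and using outer regularity gives $\|T\|(K)\le 2^n C_0 C\Ha^n(K)$, extended to arbitrary Borel sets by Borel regularity of $\|T\|$. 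Together with the pushforward identity above, Remark \ref{rmk:multipl} yields that $d^{-1}T\in \bI_n(X)$ is a metric fundamental class of $X$. The main obstacle is controlling the degrees: a priori the $d_k=\deg(\psi_k)$ could be unbounded or fail to converge, but the uniform mass control on $T_k^{\ast}$ forces $|d_k|$ to be bounded, after which a pigeonhole argument allows passing to a constant subsequence so that the topological compatibility of each $T_k$ can be transferred to the limit $T$ with the correct integer multiplier.
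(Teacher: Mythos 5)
Your proposal is correct and follows essentially the same approach as the paper: push forward the fundamental classes $T_k$ by $2$-Lipschitz extensions $\eta_k$ built from a maximal $\epsilon_k$-net, pass to a weak limit via Ambrosio--Kirchheim compactness, verify the mass bound using Lemma \ref{lemma: integer-currents-pushfwrd-lip-constant}, lower semicontinuity and \eqref{eq:vol-control}, establish $\varphi_\# T = d\deg(\varphi)\bb M$ for a constant $d\in\Z\setminus\{0\}$, and conclude via Remark \ref{rmk:multipl}. The only cosmetic difference is how you obtain the constant integer $d$: you first bound $|d_k|$ by a mass comparison and then pass to a constant subsequence, whereas the paper simply notes that $\deg(\bar f\circ\eta_k)\bb M$ must converge weakly to $f_\# T$, which forces the integer sequence $\deg(\psi_k)$ to be eventually constant; both arguments are valid.
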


Note that, since $X$ is a closed manifold, the existence of $\epsilon_k$-isometries $X_k\to X$ can be upgraded to the existence of \emph{continuous} $\epsilon_k$-isometries \cite[Proposition 1.2]{ivanov97}. However, it is not clear whether these can always be chosen to have non-zero degree. Compare with \cite[Theorem 1.5]{ivanov97}.

\begin{proof}
    For each $k \in \N$, let $T_k\in \bI_n(X_k)$ be the metric fundamental class of $X_k$ satisfying
    \begin{align}\label{eq:unif-const}
    \|T_k\|\le C\Ha^n_{X_k}
    \end{align}
    for some $C>0$ independent of $k$. Let $Y_k \subset X_k$ be a maximal $\epsilon_k$-net in $X_k$, i.e.
    \[
    \bigcup_{x\in Y_k}B(x,2\epsilon_k)=X_k, \textrm{ and } B(x,\epsilon_k)\cap B(y,\epsilon_k)=\varnothing \textrm{ for distinct } x,y\in Y_k.
    \]
    Note that $\psi_k|_{Y_k}$ is $2$-Lipschitz. Embed $X \subset l^\infty$. Since $l^\infty$ is an injective metric space, there exists a $2$-Lipschitz extension $\eta_k\colon X_k \to l^\infty$ of $\psi_k$ for each $k$. It follows that $d(\psi_k,\eta_k) \leq 8\epsilon_k$. Indeed, if $x \in X_k$ then there exists $y \in Y_k$ such that $d(x,y) \leq 2\epsilon_k$. Therefore,
    \begin{equation}\label{eq: lip-mfld-approx-implies-fund-class-epsilon-isometry-approx}
        d(\psi_k(x),\eta_k(x)) \leq d(\psi_k(x),\psi_k(y)) + d(\eta_k(y), \eta_k(x)) \leq 8\epsilon_k.
    \end{equation}

Set $S_k:=\eta_{k\#}T_k$. Together \eqref{eq:vol-control} and \eqref{eq:unif-const} imply that $\sup_k\bM(S_k)<\infty$ while by \eqref{eq: lip-mfld-approx-implies-fund-class-epsilon-isometry-approx} the support of $S_k$ is contained in $\overline N_{8\epsilon_k}(X)$. (Here $N_\delta(A)$ denotes the neighborhood of $A\subset X$ in $l^\infty$; we also denote $N_\delta^X(A):=N_\delta(A)\cap X$.)  It follows from the compactness \cite[Theorem 5.2]{ambrosio-kirchheim-2000} and closure theorem \cite[Theorem 8.5]{ambrosio-kirchheim-2000} that the sequence $T_k$ converges weakly to some $T \in \bI_n(X)$ up to a subsequence. Since $\partial T_k=0$ for all $k$ we obtain that $\partial T=0$.

\medskip We claim that $T$ is a metric fundamental class of $X$. Indeed, the weak convergence $S_k\rightharpoonup T$ and Lemma \ref{lemma: integer-currents-pushfwrd-lip-constant} yield, for every compact $K\subset X$ and $\delta>0$
\begin{align*}
\|T\|(N_\delta(K))\le \liminf_{k\to\infty}\|S_k\|(N_\delta(K))\le 2^n\liminf_{k\to\infty}\|T_k\|(\eta_k\inv(N_\delta(K))),
\end{align*}
while \eqref{eq: lip-mfld-approx-implies-fund-class-epsilon-isometry-approx} implies $\eta_k\inv(N_\delta(K)) \subset \psi_k\inv(\overline N_{8\epsilon_k}^X(N_\delta^X(K)))\subset \psi_k\inv(N_{2\delta}^X(K))$ for large $k$. Together with \eqref{eq:vol-control} and \eqref{eq:unif-const} this gives
\begin{align*}
\|T\|(N_\delta(K))&\le C_1\liminf_{k\to \infty} \|T_k\|(\psi_k\inv(N_{2\delta}^X(K)))\\
&\le C_2\limsup_{k\to\infty} \Ha^n(\psi_k^{-1}(N_{2\delta}^X(K)))\le C_3\Ha^n( N_{2\delta}^X(K)),
\end{align*}
where the constants $C_1,C_2,C_3$ depend on $n$ and on the constants of \eqref{eq:vol-control} and \eqref{eq:unif-const}. Since $K$ and $\delta$ are arbitrary this implies that $\|T\|\le C_3\Ha^n$. 

\medskip Thus $T$ satisfies Definition \ref{def: metric fundamental class}(a). To prove Definition \ref{def: metric fundamental class}(b), let $M$ be the closed, oriented, smooth $n$-manifold $X$ is homeomorphic to, and let $f:X\to M$ be a Lipschitz map. Since $M$ is an absolute Lipschitz neighborhood retract, there exists a Lipschitz extension $\bar f\colon U \to M$ of $f$ to some open neighborhood $U \subset l^\infty$ of $X$. For large enough $k$ we have that $\bar f\circ \eta_k:X_k\to M$ is Lipschitz, and by the weak convergence $S_k\rightharpoonup T$ we obtain
\begin{align*}
f_\#T=\lim_k\bar f_\#S_k=\lim_k(\bar f\circ\eta_k)_\#T_k=\lim_k\deg(\bar f\circ \eta_k)\cdot \bb{M}, 
\end{align*}
where the last equality uses that $T_k$ is a metric fundamental class of $X_k$. For sufficiently large $k$, $d(\eta_k,\psi_k)$ -- and consequently $d(\bar f\circ\eta_k,f\circ\psi_k)$ -- is small enough so that $\bar f\circ\eta_k$ and $f\circ\psi_k$ are homotopic. It follows that $\deg(\bar f\circ\eta_k)=\deg(f\circ\psi_k)=\deg(f)\cdot\deg(\psi_k)$. In particular $\deg(\psi_k)$ is eventually a constant $d\in \Z\setminus\{0\}$. Thus
\[
f_\#T=d\deg(f)\bb M,
\]
and by Remark \ref{rmk:multipl} it follows that $d\inv T\in \bI_n(X)$ satisfies (a) and (b) in Definition \ref{def: metric fundamental class}.
\end{proof}

We finish the paper by tying together the results of Sections \ref{sec: degree-bound}, \ref{sec: approx-lip-mfld}, and \ref{sec:approx-lip-mfld-to-fund-class} to give a proof of Theorem \ref{thme: main}.

\begin{proof}[Proof of Theorem \ref{thme: main}]
(1)$\implies$(2) follows from Theorem \ref{thm: fund-current-implies-degree-bound}, while (2)$\implies$(3) is given by Theorem \ref{thm: lip-mfld-approx-only-degree}. The implication (3)$\implies$(1) is proved by Theorem \ref{thm: lip-mfld-approx-implies-fund-current}. Indeed, since $X_k$ is a bi-Lipschitz manifold for each $k$, it follows that
\begin{align*}
    \bM(\bb{X_k})\le n^{n/2}\Ha^n(X_k)\quad\textrm{for all }k,
\end{align*}
(see Example \ref{ex:bilip}) and we can apply Theorem \ref{thm: lip-mfld-approx-implies-fund-current} with $\psi_k:=\varphi_k\inv$. 
\end{proof}

\appendix
\section{An example}\label{sec:example}
In the following we construct a closed, oriented connected metric surface $X$ with finite Hausdorff $2$-measure and infinite Nagata dimension. Notice that such a space cannot be geodesic because, by \cite{urs-martina}, the Nagata dimension of a closed geodesic metric surface is equal to $2$. We require the following equivalent definition of Nagata dimension (cf. \cite[Proposition 2.5]{lang-nagata}):

\begin{definition}\label{def: nagata-equiv}
    The Nagata dimension of a metric space is the infimum of all integers $n$ with the following property: there exists a constant $c>0$ such that for all $s>0$ there exists a $s$-bounded cover of $X$ of the form $\mathcal{B}= \bigcup_{i=0}^n \mathcal{B}_i$ where each $\mathcal{B}_i$ is $cs$-separated.
\end{definition}

Here, a collection $\mathcal{B}$ is said to be $s$-separated for some $s>0$ if $\operatorname{dist}(A,B)>s$ for all $A,B \in \mathcal{B}$. 

\subsection*{Construction of $X$:}
Fix $n\ge 2$ for the moment. Since the Nagata dimension of $[0,1]^{n+1}$ is $n+1$, Definition \ref{def: nagata-equiv} implies that, for each $k \in \N$, there exists a $s_{k,n}>0$ such that $[0,1]^{n+1}$ does not have a $2s_{k,n}$-bounded cover of the form $\mathcal{B}= \bigcup_{i=0}^n \mathcal{B}_i$ where each $\mathcal{B}_i$ is $\frac{2}{3k}s_{k,n}$-separated.

\medskip
Let $A_{n,k} = \frac{1}{10k}s_{k,n} \Z^{n+1} \cap I^{n+1}$ and fix an ordering of $A_{n,k}$. Let $\gamma$ be a piecewise linear curve joining the points in $A_{n,k}$ in order without self-intersections. Define $X_{k,n}$ to be the union of a thin 2-dimensional cylinder around $\gamma$ and a spherical cap at one end-point of $\gamma$, endowed with the metric of $[0,1]^{n+1}$. By choosing the cylinder thin enough we can assume that it does not self-intersect and $X_{k,n}$ has area $\le 1$. Note that $X_{k,n}$ is homeomorphic to a closed disc. We obtain the surface $X$ by scaling each $X_{k,n}$ by $2^{-kn}$ and replacing small, disjoint, balls $B_{k,n}\subset \mathbb S^2$ by $2^{-nk}X_{k,n}$ glued along the boundary of $B_{k,n}$. It follows that $\Ha^2(X)\le 4\pi+\sum_{k,n}2^{-2nk}<\infty$, and that $X$ is homeomorphic to $\mathbb S^2$.

\subsection*{Nagata dimension of $X$ is infinite:} Suppose to the contrary that $\dim_X=n<\infty$ (we may assume $n\ge 2$). Then for some $k\in \N$, the property in Definition \ref{def: nagata-equiv} holds with $c=1/k$. Let $\mathcal B=\bigcup_i^n\mathcal B_i$ be a $2^{-nk}s_{n,k}$-bounded cover of $X$, with the subfamilies $\mathcal B_i$ being $2^{-nk}s_{n,k}/k$-separated. By intersecting the cover with $2^{-nk}X_{n,k}$ and rescaling by $2^{nk}$, we obtain a $2s_{k,n}$-bounded cover of $X_{k,n}$ which is the union of $n$ subfamilies each of which is $s_{k,n}/k$-separated. By abuse of notation we denote this cover and subfamilies by $\mathcal B$ and $\mathcal B_i$ ($i=1,\ldots, n)$, respectively.

\medskip Since $X_{k,n}$ is $\frac{s_{k,n}}{10k}$-dense in $[0,1]^{n+1}$ by construction, we have that if
\[
\mathcal C=\{ N_{\frac{s_{n,k}}{6k}}(B):\ B\in \mathcal B\},\quad \mathcal C_i=\{ N_{\frac{s_{n,k}}{6k}}(B):\ B\in \mathcal B_i\}\quad (i=1,\ldots,n),
\]
then $\mathcal C=\bigcup_i^n\mathcal C_i$ is a $2s_{k,n}$-bounded cover of $[0,1]^{n+1}$, and $\mathcal C_i$ is $\frac{2s_{k,n}}{3k}$-separated for each $i=1,\ldots, n$. This contradicts the choice of $s_{k,n}$ in the beginning of the construction. Thus $X$ cannot have finite Nagata dimension.

\bibliographystyle{plain}
\bibliography{refs}

\begin{thebibliography}{10}

\bibitem{alvarez-thompson04}
J.~C. \'Alvarez~Paiva and A.~C. Thompson.
\newblock Volumes on normed and {F}insler spaces.
\newblock In {\em A sampler of {R}iemann-{F}insler geometry}, volume~50 of {\em
  Math. Sci. Res. Inst. Publ.}, pages 1--48. Cambridge Univ. Press, Cambridge,
  2004.

\bibitem{ambrosio-kirchheim-2000}
Luigi Ambrosio and Bernd Kirchheim.
\newblock Currents in metric spaces.
\newblock {\em Acta Math.}, 185(1):1--80, 2000.

\bibitem{basso2023lipschitz}
Giuliano Basso.
\newblock Lipschitz extension theorems with explicit constants.
\newblock {\em Anal. Geom. Metr. Spaces}, 12(1):Paper No. 20240010, 2024.

\bibitem{basso2023geometric}
Giuliano Basso, Denis Marti, and Stefan Wenger.
\newblock Geometric and analytic structures on metric spaces homeomorphic to a
  manifold.
\newblock {\em arXiv preprint arXiv:2303.13490}, 2023.

\bibitem{basso2021undistorted}
Giuliano Basso, Stefan Wenger, and Robert Young.
\newblock Undistorted fillings in subsets of metric spaces.
\newblock {\em Adv. Math.}, 423:54, 2023.
\newblock Id/No 109024.

\bibitem{bate-2020}
David Bate.
\newblock Purely unrectifiable metric spaces and perturbations of {Lipschitz}
  functions.
\newblock {\em Acta Math.}, 224(1):1--65, 2020.

\bibitem{bate24}
David Bate and Julian Weigt.
\newblock Alberti representations, rectifiability of metric spaces and higher
  integrability of measures satisfying a pde.
\newblock In preparation; to appear in Arxiv, 2024.

\bibitem{bonk-kle02}
Mario Bonk and Bruce Kleiner.
\newblock Quasisymmetric parametrizations of two-dimensional metric spheres.
\newblock {\em Invent. Math.}, 150(1):127--183, 2002.

\bibitem{bredon2012sheaf}
Glen~E Bredon.
\newblock {\em Sheaf theory}, volume 170.
\newblock Springer Science \& Business Media, 2012.

\bibitem{federer-gmt}
Herbert Federer.
\newblock {\em Geometric measure theory}, volume Band 153 of {\em Die
  Grundlehren der mathematischen Wissenschaften}.
\newblock Springer-Verlag New York, Inc., New York, 1969.

\bibitem{fed-flem}
Herbert Federer and Wendell~H. Fleming.
\newblock Normal and integral currents.
\newblock {\em Annals of Mathematics}, 72(3):458--520, 1960.

\bibitem{ferry-okun95}
Steven~C. Ferry and Boris~L. Okun.
\newblock Approximating topological metrics by {R}iemannian metrics.
\newblock {\em Proc. Amer. Math. Soc.}, 123(6):1865--1872, 1995.

\bibitem{gromov85}
Misha Gromov.
\newblock {\em Metric structures for {R}iemannian and non-{R}iemannian spaces}.
\newblock Modern Birkh\"auser Classics. Birkh\"auser Boston, Inc., Boston, MA,
  english edition, 2007.
\newblock Based on the 1981 French original, With appendices by M. Katz, P.
  Pansu and S. Semmes, Translated from the French by Sean Michael Bates.

\bibitem{Heinonen-Keith}
Juha Heinonen and Stephen Keith.
\newblock Flat forms, bi-{L}ipschitz parameterizations, and smoothability of
  manifolds.
\newblock {\em Publ. Math. Inst. Hautes \'Etudes Sci.}, (113):1--37, 2011.

\bibitem{heinonen-rickman02}
Juha Heinonen and Seppo Rickman.
\newblock Geometric branched covers between generalized manifolds.
\newblock {\em Duke Math. J.}, 113(3):465--529, 2002.

\bibitem{heinonen-sullivan02}
Juha Heinonen and Dennis Sullivan.
\newblock On the locally branched {E}uclidean metric gauge.
\newblock {\em Duke Math. J.}, 114(1):15--41, 2002.

\bibitem{Koskela-lecture-notes}
Stanislav Hencl and Pekka Koskela.
\newblock {\em Lectures on mappings of finite distortion}, volume 2096 of {\em
  Lecture Notes in Mathematics}.
\newblock Springer, Cham, 2014.

\bibitem{hohti-1993}
Aarno Hohti.
\newblock On absolute {Lipschitz} neighbourhood retracts, mixers, and
  quasiconvexity.
\newblock {\em Topol. Proc.}, 18:89--106, 1993.

\bibitem{ivanov97}
S.~V. Ivanov.
\newblock Gromov-{H}ausdorff convergence and volumes of manifolds.
\newblock {\em Algebra i Analiz}, 9(5):65--83, 1997.

\bibitem{urs-martina}
Martina J\o~rgensen and Urs Lang.
\newblock Geodesic spaces of low {N}agata dimension.
\newblock {\em Ann. Fenn. Math.}, 47(1):83--88, 2022.

\bibitem{kirchheim1994rectifiable}
Bernd Kirchheim.
\newblock Rectifiable metric spaces: local structure and regularity of the
  hausdorff measure.
\newblock {\em Proceedings of the American Mathematical Society},
  121(1):113--123, 1994.

\bibitem{lang-local}
Urs Lang.
\newblock Local currents in metric spaces.
\newblock {\em J. Geom. Anal.}, 21(3):683--742, 2011.

\bibitem{lang-nagata}
Urs Lang and Thilo Schlichenmaier.
\newblock Nagata dimension, quasisymmetric embeddings, and {L}ipschitz
  extensions.
\newblock {\em Int. Math. Res. Not.}, (58):3625--3655, 2005.

\bibitem{Meier-Wenger}
Damaris Meier and Stefan Wenger.
\newblock Quasiconformal almost parametrizations of metric surfaces.
\newblock {\em arXiv preprint arXiv:2106.01256}, 2021.

\bibitem{nta-rom22}
Dimitrios Ntalampekos and Matthew Romney.
\newblock Polyhedral approximation and uniformization for non-length surfaces.
\newblock Preprint arXiv:2206.01128, 2022.

\bibitem{nta-rom23}
Dimitrios Ntalampekos and Matthew Romney.
\newblock Polyhedral approximation of metric surfaces and applications to
  uniformization.
\newblock {\em Duke Math. J.}, 172(9):1673--1734, 2023.

\bibitem{petersen-finiteness-metric}
Peter Petersen, V.
\newblock A finiteness theorem for metric spaces.
\newblock {\em J. Differential Geom.}, 31(2):387--395, 1990.

\bibitem{petersen-gh-limits}
Peter Petersen, V.
\newblock Gromov-{H}ausdorff convergence of metric spaces.
\newblock In {\em Differential geometry: {R}iemannian geometry ({L}os
  {A}ngeles, {CA}, 1990)}, volume 54, Part 3 of {\em Proc. Sympos. Pure Math.},
  pages 489--504. Amer. Math. Soc., Providence, RI, 1993.

\bibitem{rado-degree}
T.~Rado and P.~V. Reichelderfer.
\newblock {\em Continuous transformations in analysis. {W}ith an introduction
  to algebraic topology}, volume Band LXXV of {\em Die Grundlehren der
  mathematischen Wissenschaften in Einzeldarstellungen mit besonderer
  Ber\"ucksichtigung der Anwendungsgebiete}.
\newblock Springer-Verlag, Berlin-G\"ottingen-Heidelberg, 1955.

\bibitem{semmes-poincare}
S.~Semmes.
\newblock Finding curves on general spaces through quantitative topology, with
  applications to {S}obolev and {P}oincar\'e{} inequalities.
\newblock {\em Selecta Math. (N.S.)}, 2(2):155--295, 1996.

\bibitem{Spanier-alg-coh}
Edwin~H. Spanier.
\newblock {\em Algebraic topology}.
\newblock Springer-Verlag, New York, [1995?].
\newblock Corrected reprint of the 1966 original.

\bibitem{Brian-white-least-volume}
Brian White.
\newblock Existence of least-area mappings of {$N$}-dimensional domains.
\newblock {\em Ann. of Math. (2)}, 118(1):179--185, 1983.

\end{thebibliography}
\end{document}